\newtheorem{teo}{Theorem}[section]
\newtheorem{rem}{Remark}[section]
\newtheorem{prop}{Proposition}[section]
\newtheorem{cor}{Corollary}[section]
\newtheorem{es}{Example}[section]
\newtheorem{defin}{Definition}[section]
\newcommand{\Z}{{\mathbb{Z}}}
\newcommand{\C}{{\mathbb{C}}}
\newcommand{\R}{{\mathbb{R}}}
\newcommand{\Q}{{\mathbb{Q}}}
\newcommand{\N}{{\mathbb{N}}}
\newcommand{\G}{{\mathcal{G}}}
\newcommand{\B}{{\mathcal{B}}}
\newcommand{\BG}{B{\mathcal{G}}}
\newcommand{\DG}{D{\mathcal{G}}}
\newcommand{\MS}{{\mathcal{S}}}
\newcommand{\Proj}{{\mathbb{P}}}
\newcommand{\A}{{\mathcal{A}}}
\newcommand{\Sc}{{\mathcal{S}}}
\newcommand{\Cc}{{\mathcal{C}}}
\newcommand{\F}{{\mathcal{F}}}
\newcommand{\T}{{\mathcal{T}}}
\newcommand{\U}{{\mathcal{U}}}
\newcommand{\Hc}{{\mathcal{H}}}
\newcommand{\oG}{{\overline {\mathcal  G}}}
\newcommand{\supp}{{\text{supp}}}
\title{Families of  building sets and regular wonderful models }
\author{Giovanni Gaiffi, Matteo Serventi}
\date{\today}
\begin{document}

\maketitle

\begin{abstract}
Given a subspace arrangement, there are several De
Concini-Procesi
models associated to it,   depending on   
  distinct sets 
of initial combinatorial data (building sets).
The first goal of this paper is   to   describe, for the root arrangements of types  \(A_n\), \(B_n\) (=\(C_n\)), \(D_n\),  the poset of  all the  building sets  which are invariant with respect to the Weyl group action, and therefore  to  classify all the wonderful models which are obtained by adding to the complement of the arrangement  an equivariant   divisor. 
Then we  point out, for every fixed \(n\), a family of  models  which includes the minimal  model and the maximal model; we  call these  models {\em regular models} and we  compute, in the complex case,  their Poincar\'e polynomials.
\end{abstract}

\section{Introduction} 
In \cite{DCP2}, \cite{DCP1},   De Concini and Procesi constructed  {\em wonderful  models}  for the   complement of a
subspace arrangement in a  vector space.  These are smooth varieties, proper over the given space, in which the union of the subspaces is replaced by a divisor with normal crossings.

The  interest in these  varieties was at first motivated by an  approach to Drinfeld construction of special
solutions for Khniznik-Zamolodchikov equation (see \cite{drinfeld}).  Moreover, in \cite{DCP2} it was shown, using the cohomology description of these models  to give an explicit presentation of  a Morgan algebra, that the mixed Hodge structure and the rational homotopy type of the complement of a complex  subspace arrangement  depend only on the intersection lattice (viewed as a ranked poset).

 Then real and complex  De Concini-Procesi models
turned out to play a relevant role in several   fields of mathematical research:  subspace and toric  arrangements,  toric varieties and  tropical geometry, 
moduli spaces of curves,     configuration spaces, box splines, index theory,  discrete  geometry    (see for instance   \cite{DCP4}, \cite{DCP3}, 
\cite{etihenkamrai}, \cite{feichtner},    \cite{feichtnersturmfels},  \cite{postnikov},
\cite{postnikoreinewilli} and  \cite{zelevinski}).

In general, given a subspace arrangement, there are several De
Concini-Procesi
models associated to it,   depending on   
  distinct sets 
of initial combinatorial data ({\em building sets}, see Section \ref{subsecbuilding}). Among these building sets there are  always a minimal one and a maximal one
with respect to inclusion: as a consequence there are always a  minimal and a maximal De Concini-Procesi model. 

The importance of the minimal construction was immediately pointed out, but  real and complex  non minimal models (in particular maximal models) appeared in various contexts (see 
 \cite{callegaif},   \cite{DJS},  \cite{LTV},  \cite{SzenesVergne}).  For instance  it is well known that  the toric variety of type \(A_{n-1}\)    is isomorphic to the maximal model  associated to the boolean arrangement (see  \cite{HenPisa} for
further references).

In this paper we will deal with  the root arrangements of types \(A_n\), \(B_n\) (=\(C_n\)), \(D_n\).  
As our first goal   we will   describe, for  these arrangements,  the poset of  all the associated building sets (ordered by inclusion)  which are invariant with respect to the Weyl group action, and therefore we will classify all the wonderful models which are obtained by adding to the complement of the arrangement  an equivariant   divisor. 

Our second goal will be  to point out, for every fixed \(n\), a family of models (which we will call {\em regular models}), which includes the minimal  model and the maximal model, and to compute the Poincar\'e polynomials of all the models in this family.

To describe our results more in detail, let  us consider  for instance the \(A_{n-1}\) case:  we will introduce a partial order on the set \(\Lambda_n\) of all the partitions of \(n\), and we will define a family of \(S_n\) invariant  building sets  \(\G_\lambda\), where \(\lambda\in \Lambda_n\) is a {\em building partition}, i.e. it is \((n)\) or a partition with at least two parts greater than or equal to 2.

Then, given any subset \(\{\lambda^1,\lambda^2,...,\lambda^k\}\) of pairwise not comparable building partitions, we will show that  the union \(\{\G_{\lambda^1} \cup \G_{\lambda^2}\cup\cdots  \cup \G_{\lambda^k}\}\) is an \(S_n\) invariant  building set, and that all the  \(S_n\) invariant  building sets can be obtained in this way (see Theorem \ref{teoclassificazione}).

Some particularly regular objects come out of  this picture, i.e. the building sets \(\G_{s}(A_{n-1})\)  obtained as the union of the building  sets   \(\G_\lambda\) such that \(\lambda\) has exactly \(s\)  parts.
Therefore, for every \(n\geq 2\)  we have a family of  \(n-2\) {\em regular building sets}:
\[\G_{1}(A_{n-1})\subset  \G_{2}(A_{n-1}) \subset \cdots  \subset \G_{n-2}(A_{n-1})\]  where \(\G_{1}(A_{n-1})\) coincides with  the minimal building set and  \(\G_{n-2}(A_{n-1})\)  with  the maximal one.  
We will give formulas for the Poincar\'e  series (Section \ref{secinductivefamilies}) of all the {\em regular    models} \(Y_{ \G_{s}(A_{n-1})}\).  For \(s=1\)   this  series  is   the well known series for the moduli spaces of stable \(n+1\)-pointed curves of genus zero,  while in the case of maximal models the formulas we obtain are explicit sums and products of polynomials whose coefficients involve the Stirling numbers of the second kind  (different formulas for the  Poincar\'e polynomials of the maximal models were  described in \cite{GaiffiServenti}).  The formulas for   the intermediate models are ``interpolations'' between the formulas for the maximal and the minimal cases.  

We will also compute  formulas for the Poincar\'e series of some auxiliary  wonderful models of subspace arrangements  (see Theorem \ref{formulafamigliainduttivatilda}).

The classification of all the Weyl group equivariant models  in the  \(B_n\) case, and  the    computations of the Poincar\'e polynomials of the \(B_n\)  regular models, are provided  in  Sections \ref{buildingbn}  and \ref{secinductivefamiliesbn},  while the  \(D_n\)  case is studied in  Sections \ref{secdn} and \ref{regdn}.

Finally, we  will  point out   the  connection between our formulas  and  the rich combinatorics of the
corresponding real
 De Concini-Procesi models.  The real models can  be contructed, as it is well known,  by gluing nestohedra, and    from this  one obtains   formulas for their   Euler characteristics. 
Different  formulas for these  Euler characteristics can also be obtained  by evaluating  in \(q=-1\) the Poincar\'e polynomials of the corresponding complex models. 
From the comparison of these two different computations  one obtains  nice combinatorial equivalences (see Section \ref{seceuler}). \\

\noindent {\em Acknowledgements.} We  wish to thank Filippo Callegaro  and Andrea Maffei for their  useful suggestions.

\section{Basic construction}
\subsection{Building sets and nested sets}
\label{subsecbuilding}
Let $V$ be a finite dimensional vector space 
and let  $\G$ be a finite set of subspaces of the dual space $V^*$.  We denote  by $\Cc_\G$ its closure under the sum.

\begin{defin}
 Given a subspace $U\in\Cc_\G$, a \textbf{decomposition of} $U$ in $\mathbf{\Cc_\G}$ is a collection
$\{U_1,\cdots,U_k\}$ ($k>1$) of non zero subspaces in $\Cc_\G$ such that
\begin{enumerate}
 \item $U=U_1\oplus\cdots\oplus U_k$
 \item for every subspace $A\subset U$, $A\in\Cc_\G$, we have $A\cap U_1,\cdots,A\cap U_k \in \Cc_\G$ and
$A=\left(A\cap U_1\right)\oplus\cdots\oplus \left(A\cap U_k\right)$.
\end{enumerate}
\end{defin}
\begin{defin}
 A subspace $F\in\Cc_\G$ which does not admit a decomposition  is called \textbf{irreducible} and the set of
irreducible subspaces is denoted by $\mathbf{\F_\G}$.
\end{defin}
One can prove  that 
every subspace $U\in\Cc_\G$ has a unique decomposition into irreducible subspaces.

\begin{defin}\label{building}
 A collection $\G$ of subspaces of $V^*$ is called \textbf{building} if every element $C\in\Cc_\G$ is the direct sum
$G_1\oplus\cdots\oplus G_k$ of the set of maximal elements $G_1,\cdots,G_k$ of $\G$ contained in $C$.
\end{defin}

As first examples of building sets one can consider the  set of irreducible subspaces  of a given family of subspaces of $V^*$, or any    set  of subspaces of $V^*$ which is closed under the sum.

Given a family $\G$ of subspaces of $V^*$ there are different sets \({\mathcal B}\) of subspaces of $V^*$ such that
\(\Cc_{{\mathcal B}}=\Cc_\G\); if we order by inclusion the collection of such sets, it turns out  that the minimal
element is
$\F_\G$ and the maximal one is $\Cc_\G$.
\begin{defin}(see \cite{DCP1}) \label{Gnested}
 Let $\G$ be a building set of subspaces of $V^*$. A subset $\Sc\subset \G$ is called $\mathbf{\G}$\textbf{-nested} if and only if 
 for every  subset $\{A_1,\cdots,A_k\}$ (\(k\geq 2\)) of pairwise non comparable elements of $\Sc$  the subspace   $A=A_1+\cdots +
A_k$ does not belong to  $\G$.

\end{defin}

We notice that if  $\Cc$ is a building family of subspaces closed under the sum, then   the subspaces of 
a $\Cc$-nested set  are  totally ordered (with respect to inclusion).
For a more general definition of building sets and nested sets from a purely combinatorial viewpoint see \cite{feichtnerkozlovincidence}.

\subsection{Wonderful models}
\label{subsecwonderful}

Let us take  \(\C\)  as the base field and consider a finite subspace arrangement in the complex vector space $V$. We will describe this arrangement by the dual arrangement \(\G\) in \(V^*\) (for every $A\in\G$, we will  denote by  $A^\perp$ its
annihilator in $V$). The complement in \(V\) of the arrangement will be denoted by \(\A_\G\).\\
For every $A\in\G$ we have a rational map defined outside of \(A^{\perp}\):
$$\pi_A:V\longrightarrow V/A^\perp \longrightarrow \Proj \left( V/A^\perp \right).$$

We then consider the embedding 
$$\phi_\G:\A_\G\longrightarrow V\times\prod_{A\in\G}\Proj\left( V/A^\perp \right)$$
given by the  inclusion on the first component and by the maps \(\pi_A\) on the other components. \begin{defin}
The De Concini-Procesi model
\(Y_{\G}\) associated to  $\G$ is the closure of $\phi_\G \left(\A_\G \right)$ in
$V\times\prod_{A\in\G}\Proj\left(V/A^\perp\right)$.
\end{defin}

These  {\em wonderful models}  are  particularly interesting when the arrangement $\G$ is building: they turn out to be 
smooth varieties and the complement of \(\A_\G\) in \(Y_\G\) is a divisor with normal crossings. The irreducible components of this divisor are in correspondence with the elements of \(\G\), and their intersection are described by the following rule: let us consider a subset   \(S\) of \(\G\); then the common intersection of the irreducible components associated to the elements of \(S\)   is nonempty if and only if \(S\) is a 
\(\G\)-nested set.  

The  integer cohomology rings of the models \(Y_\G\) have been described in  \cite{DCP1}. They are torsion free, and in  \cite{YuzBasi}  Yuzvinski explicitly described  $\Z$-bases (see also  \cite{GaiffiBlowups}).  We  briefly  recall these results.

Let $\G$ be a building set of subspaces of $V^*$.  If $\Hc\subset
\G$ and $B\in\G$ is such that $A\subsetneq B$ for each $A\in\Hc$,  one  defines
$$d_{\Hc,B}:=\dim B - \dim \left(\sum_{A\in\Hc} A\right).$$
In the polynomial ring $\Z[c_A]_{A\in\G}$, we consider the ideal \(I\) generated by the polynomials  
$$P_{\Hc,B}:=\prod_{A\in\Hc}c_A\left(\sum_{C\supset B}c_C\right)^{d_{\Hc,B}}$$
 as $\Hc$ and $B$ vary.
\begin{teo}(see \cite{DCP1}).\\
There is a surjective ring homomorphism
$$\phi \: : \: \Z[c_A]_{A\in\G}\longrightarrow H^*(Y_\G,\Z)$$
whose  kernel is $I$ and such that  \(\phi(c_A)\in H^2(Y_\G,\Z)\).  
\end{teo}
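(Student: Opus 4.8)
The plan is to follow the strategy of De Concini and Procesi in \cite{DCP1}: one constructs $Y_\G$ by a sequence of blow-ups along (proper transforms of) the subspaces in $\G$, ordered so that smaller subspaces are blown up first, and one reads the cohomology off this tower. Concretely, I would first recall that since $\G$ is building, $Y_\G$ is obtained from $V$ (or better, from $\mathbb{P}(V\oplus\C)$ or a suitable compactification) by iterated blow-ups along the subvarieties $A^\perp$ for $A\in\G$, taken in a non-decreasing order of dimension of $A^\perp$ (equivalently a non-increasing order of $\dim A$). Each such blow-up introduces an exceptional divisor $D_A$, and the class $c_A$ will be defined as (a sign times) the cohomology class of $D_A$, pulled back to $Y_\G$. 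The map $\phi$ sends the formal variable $c_A$ to this class; surjectivity will follow because the cohomology of a blow-up is generated over the cohomology of the base by the class of the exceptional divisor (projective bundle formula), so by induction on the number of blow-ups the classes $c_A$ together with $H^*(V)=\Z$ generate everything — and $H^*(V,\Z)=\Z$ in degree $0$, so in fact the $c_A$'s alone generate $H^{>0}$.

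The substantive content is the identification of the kernel with the ideal $I$. Here I would argue in two directions. For the inclusion $I\subseteq\ker\phi$: the relations $P_{\Hc,B}$ encode two geometric facts on the blow-up tower. First, if $\{A_1,\dots,A_k\}$ is \emph{not} $\G$-nested — i.e. the $A_i$ are pairwise non-comparable and $A_1+\cdots+A_k\in\G$ — then the corresponding exceptional divisors have empty common intersection, so $\prod c_{A_i}=0$; this handles the ``square-free'' part. Second, for a chain-type configuration $\Hc$ all contained in a given $B\in\G$, the normal bundle computation in the blow-up of $B^\perp$ forces a relation of the form $\prod_{A\in\Hc}c_A\cdot(\text{class})^{d_{\Hc,B}}=0$, where the relevant class is $\sum_{C\supseteq B}c_C$ (this sum represents the restriction of the relevant line bundle, because after all blow-ups the divisor classes indexed by $C\supseteq B$ add up to the pullback of the hyperplane/tautological class on $\mathbb{P}(V/B^\perp)$) and the exponent $d_{\Hc,B}=\dim B-\dim(\sum_{A\in\Hc}A)$ is the codimension drop that makes the corresponding product of Chern classes vanish. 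So each generator $P_{\Hc,B}$ maps to $0$.

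For the reverse inclusion $\ker\phi\subseteq I$, the cleanest route is a dimension count: I would invoke the result of Yuzvinski (\cite{YuzBasi}, cited in the excerpt, see also \cite{GaiffiBlowups}) that the monomials which are \emph{admissible} with respect to some linear order on $\G$ — roughly, products $\prod c_{A}^{m_A}$ supported on a $\G$-nested set with exponents bounded by the relevant $d_{\Hc,B}$ — form a $\Z$-basis of $H^*(Y_\G,\Z)$, and that their images span $\Z[c_A]/I$. Since $\phi$ is surjective and these admissible monomials already span the quotient $\Z[c_A]/I$ while mapping to a $\Z$-basis of $H^*(Y_\G,\Z)$, the induced surjection $\Z[c_A]/I\twoheadrightarrow H^*(Y_\G,\Z)$ must be an isomorphism, hence $\ker\phi=I$. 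Alternatively, one can run the induction on the blow-up tower directly, checking at each stage that the new relations coming from the projective bundle formula for $\mathrm{Bl}$ along a single $A^\perp$ are exactly the $P_{\Hc,A}$ with $B=A$, together with the push-forward of relations already present — but this bookkeeping is exactly what Yuzvinski's basis theorem packages for us.

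The main obstacle, and the step I would spend the most care on, is the precise combinatorial-geometric matching in the $I\subseteq\ker\phi$ direction: verifying that under $\phi$ the class $\sum_{C\supseteq B}c_C$ really is the pullback of the tautological class on $\Proj(V/B^\perp)$ along the tower, and that the exponent $d_{\Hc,B}$ is exactly the vanishing order forced by the normal bundle of the relevant stratum after the earlier blow-ups. This requires keeping careful track of how proper transforms and normal bundles change under each blow-up in the chosen order, and it is where the building/nested hypotheses genuinely enter (they guarantee the blow-up centers meet transversally in the required sense so that the local computation is the clean one). Everything else — surjectivity, and the final isomorphism via the basis — is then essentially formal.
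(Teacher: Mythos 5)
Your proposal cannot be compared with a proof in this paper, because the paper gives none: the statement is quoted as background from \cite{DCP1} (with Theorem \ref{base coomologia} quoted alongside it), so the benchmark is De Concini--Procesi's original argument. In outline you do follow it: build $Y_\G$ by successive blow-ups, send $c_A$ to ($\pm$) the class of the corresponding boundary divisor, get surjectivity from the blow-up/projective-bundle formula, and kill the generators $P_{\Hc,B}$ by the two geometric facts you cite (empty intersection of the divisors attached to a non-nested antichain, and the identification of $\sum_{C\supset B}c_C$ with the pullback of the tautological class of $\Proj(V/B^\perp)$, raised to the power dictated by the rank of the relevant normal bundle). One point you gloss in the surjectivity step: the blow-up formula gives generation over the pullback of $H^*$ of the base \emph{and} of the center, so you also need that the restriction $H^*(Y_{\G'},\Z)\to H^*(Y_{\oG},\Z)$ is surjective at every stage; this is itself part of the inductive statement in \cite{DCP1} and should be carried along in your induction.

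The genuine gap is in your argument that $\ker\phi\subseteq I$. The basis theorem you invoke (Theorem \ref{base coomologia}, from \cite{YuzBasi}, \cite{GaiffiBlowups}) is established \emph{starting from} the presentation $H^*(Y_\G,\Z)\cong\Z[c_A]_{A\in\G}/I$, i.e.\ from the very theorem you are proving; Yuzvinski works inside the quotient ring $\Z[c_A]/I$ and the identification of that ring with cohomology is the input, not the output. So quoting it to force injectivity of $\Z[c_A]/I\twoheadrightarrow H^*(Y_\G,\Z)$ is circular. What you may legitimately use is the purely algebraic half of that circle of ideas, namely that the admissible monomials span $\Z[c_A]/I$; you must then prove, independently of the presentation, that their images in $H^*(Y_\G,\Z)$ are linearly independent --- for instance by tracking a $\Z$-basis through each blow-up via the projective-bundle formula, or by a rank count against the Betti numbers produced recursively as in Proposition \ref{propindu}. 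That is precisely the bookkeeping you dismiss in your closing sentence: it is not ``packaged'' by Yuzvinski's theorem, it is the content of the induction in \cite{DCP1}, and your ``alternative'' route is in fact the only non-circular one in the form you have set things up.
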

\begin{defin}

 Let $\G$ be a building set of subspaces of $V^*$. A function
$$f:\G\longrightarrow \N$$
is $\mathbf{\G}$\textbf{-admissible} (or simply \textbf{admissible}) if $f=0$ or, if $f\neq 0$, $\supp(f)$ is
$\G$-nested and for all $A\in\supp(f)$ one has
$$f(A)< d_{\supp(f)_A,A}$$
where $\supp(f)_A:=\{C\in\supp(f):C\subsetneq A\}$.
\end{defin}
\begin{defin}
 A monomial $m_f=\prod_{A\in\G}c_A^{f(A)}\in\Z[c_A]_{A\in\G}$ is \textbf{admissible} if $f$ is admissible.
\end{defin}
\begin{teo}\label{base coomologia}(see \cite{YuzBasi}, \cite{GaiffiBlowups})\\
 The set $\mathcal{B}_\G$ of all admissible monomials gives   a $\Z$-basis of $H^*(Y_\G,\Z)$.
\end{teo}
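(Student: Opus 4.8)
The plan is to establish Theorem \ref{base coomologia} (the Yuzvinski basis statement) by the standard ``blow-up induction'' on the geometry of the wonderful model, combined with a dimension count matching the size of $\mathcal B_\G$ against the Poincar\'e polynomial of $Y_\G$. First I would recall that, by the results of De Concini--Procesi quoted above, $Y_\G$ is obtained from $V$ by a sequence of blow-ups along (the strict transforms of) the subspaces $A^\perp$, $A\in\G$, taken in any order that refines reverse inclusion. Each such blow-up has smooth center, so I can use the classical formula for the cohomology of a blow-up $\widetilde X=\mathrm{Bl}_Z X$ with $Z\subset X$ smooth of codimension $d$:
\begin{equation*}
H^*(\widetilde X,\Z)\;\cong\; H^*(X,\Z)\;\oplus\;\bigoplus_{j=1}^{d-1} H^{*-2j}(Z,\Z)\,\xi^{\,j},
\end{equation*}
where $\xi$ is the class of the exceptional divisor. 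Iterating this over all the blow-ups gives a direct sum decomposition of $H^*(Y_\G,\Z)$ indexed by chains of nested subspaces together with an ``exponent'' attached to each subspace in the chain, and the constraint $f(A)<d_{\supp(f)_A,A}$ is exactly the range $1\le j\le d-1$ of exponents allowed at each stage (the number $d_{\supp(f)_A,A}$ being the codimension of the relevant center at the moment it is blown up). This shows $\mathcal B_\G$ has the right cardinality in each degree, i.e. it is at least a $\Z$-basis after tensoring with $\Q$, and in fact freely generates a free abelian group of the correct rank.

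The second half is to show that the admissible monomials actually span $H^*(Y_\G,\Z)$ over $\Z$ (not merely rationally) and are $\Z$-linearly independent. For spanning I would use the presentation from the theorem of De Concini--Procesi quoted just above: $H^*(Y_\G,\Z)=\Z[c_A]_{A\in\G}/I$ with $I$ generated by the polynomials $P_{\Hc,B}$. One rewrites an arbitrary monomial $\prod c_A^{f(A)}$ modulo $I$ by repeatedly applying two kinds of relations: if $\supp(f)$ is not $\G$-nested, some product $\prod_{A\in\Hc}c_A$ with $\Hc$ the ``bad'' antichain vanishes (take $d_{\Hc,B}=0$ type relation / the relation forcing products over non-nested sets to be zero), killing the monomial; and if $\supp(f)$ is $\G$-nested but some exponent $f(A)$ is too large, the relation $P_{\supp(f)_A,A}$ lets one trade $c_A^{f(A)}$ for a $\Z$-linear combination of monomials with strictly smaller exponent on $A$ (at the cost of multiplying by classes $c_C$ with $C\supsetneq A$, which keeps the support nested and decreases a suitable lexicographic-type statistic). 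A Noetherian-induction / well-ordering argument on this statistic then shows every monomial is a $\Z$-combination of admissible ones. Linear independence over $\Z$ follows from the rank count of the first part: since the $\mathcal B_\G$ span a group that surjects onto the free $\Z$-module $H^*(Y_\G,\Z)$ and $\#\mathcal B_\G$ in each degree equals the rank, there can be no relations.

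I expect the main obstacle to be the careful bookkeeping in the blow-up induction: one must check that the sequence of centers really does have the codimensions $d_{\supp(f)_A,A}$ predicted by the combinatorial formula, which requires tracking how the strict transform of $B^\perp$ meets the accumulated exceptional divisors and verifying that the relevant intersection is transverse with the expected dimension $\dim B-\dim(\sum_{A\in\Hc}A)$ after the earlier blow-ups along the $A\in\Hc$. This is where the building/nested-set combinatorics enters in an essential way, and it is precisely the content worked out in \cite{DCP1} and \cite{GaiffiBlowups}; rather than redo it, I would cite those references for the geometric induction and present only the algebraic straightening argument (spanning) in detail, deducing independence from the rank count. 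The rest — the blow-up cohomology formula and the final comparison of ranks — is routine.
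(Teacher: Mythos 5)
This theorem is not proved in the paper: it is stated as a quoted result with references to \cite{YuzBasi} and \cite{GaiffiBlowups}, so there is no internal proof to compare against. Your outline is, in substance, the route of \cite{GaiffiBlowups} (blow-up induction using the description of $Y_\G$ as an iterated blow-up with centers isomorphic to models $Y_{\oG}$ of quotient building sets) combined with a straightening/spanning argument from the presentation of $H^*(Y_\G,\Z)$ in \cite{DCP1}. The parts you spell out are essentially sound: for a non-nested antichain $\Hc$ with $B=\sum_{A\in\Hc}A\in\G$ one has $d_{\Hc,B}=0$, so $\prod_{A\in\Hc}c_A\in I$ and non-nested monomials die; the relation $P_{\supp(f)_A,A}$ does let you lower an over-large exponent at the cost of classes $c_C$ with $C\supsetneq A$; and independence does follow from spanning plus a rank count, once the rank count is actually established.

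The genuine gaps are in the first half. The iterated blow-up decomposition is \emph{not} indexed by chains: $\G$-nested sets are in general not totally ordered (for $\F_{A_{n-1}}$ they are forests of subsets), and each summand $H^{*-2j}(Z,\Z)\,\xi^j$ has $Z$ isomorphic to a wonderful model $Y_{\oG}$ of a quotient building set, whose cohomology is not built from previously encountered pieces but must itself be identified, inductively, with $\oG$-admissible monomials. The combinatorial indexing you want then requires a bijection between $\oG$-admissible monomials and the $\G$-admissible monomials whose support contains the minimal element $G$ being blown down, which in turn requires checking that the numbers $d$ computed in $V^*/G$ agree with $d_{\supp(f)_A,A}$ computed in $V^*$, and handling the fact that $A\mapsto (A+G)/G$ need not be injective on $\G-\{G\}$. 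Relatedly, the exponent bound $f(A)<d_{\supp(f)_A,A}$ is not ``exactly the range $1\le j\le d-1$'' of the blow-up at the stage where $A$ appears: at that stage ($A$ minimal among the remaining elements) the blow-up formula only yields the weaker bound $f(A)\le \dim A-1$, and the sharper bound when $\supp(f)_A\neq\emptyset$ emerges only through the identification of the center's basis just described. These are exactly the points you defer to \cite{DCP1} and \cite{GaiffiBlowups}; since they are the substance of the theorem, what you have is an accurate roadmap of the cited proof (with the ``chains'' misstatement corrected as above) rather than a self-contained argument — which is acceptable here only because the paper itself treats the statement as a citation.
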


\section{A partial ordering on partitions}
Let us denote by  $\F_{A_{n-1}}$ the building set of irreducibles associated to the root system \(A_{n-1}\). There is a bijective correspondence between the elements of $\F_{A_{n-1}}$ and the subsets of $\{1,\cdots,n\}$ of cardinality at
least two:  if the annihilator of \(A\in \F_{A_{n-1}}\) is the subspace  described by the equation  \(x_{i_1}=x_{i_2}= \cdots =x_{i_k}\) then we represent \(A\) by the set \(\{i_1,i_2,\ldots, i_k\}\).
In an analogous way we can establish a bijective correspondence between the   elements of the maximal building set ${\mathcal C}_{A_{n-1}}$  and  the unorderd partitions of the set $\{1,\cdots,n\}$ in which at least one part has more than one element:
for instance, \(\{1,3,4\}\{2,5\}\{6\}\{7,8\}\) represents the  subspace  in ${\mathcal C}_{A_{7}}$ of  dimension 4  whose annihilator is described by the system of equations \(x_{1}=x_{3}= x_{4}\), \(x_{2}=x_{5}\) and \(x_{7}=x_{8}\).

Let us denote by \(\Lambda_n\) the set of partitions of \(n\in \N\). 
To every unordered partition of $\{1,\cdots,n\}$ we can associate, considering the cardinalities of its parts, a partition in \(\Lambda_n\). Therefore we can associate a partition in \(\Lambda_n\) to every subspace in ${\mathcal C}_{A_{n-1}}$. We will say that a subspace in ${\mathcal C}_{A_{n-1}}$  has the form \(\lambda\in \Lambda_n \) if its associated partition is \(\lambda\). 
For instance, the subspace \(\{1,3,4\}\{2,5\}\{6\}\{7,8\}\)  in ${\mathcal C}_{A_{7}}$ has the form \((3,2,2,1)\).

In this section we will describe  a poset structure on \(\Lambda_n\) which will be used in  the classification of all the \(S_n\) invariant building sets associated   to the root system \(A_{n-1}\).

If \(n\geq 1\) and \(\lambda \in \Lambda_n\), we will  represent \(\lambda\)  by its Young diagram and call {\em admissible} the following moves: 
\begin{itemize}
\item[a)]  remove  an {\em entire} row and add all its boxes to   another  row which has at least two boxes;  then, if necessary, rearrange the rows in order to obtain a Young diagram (see Figure \ref{mossa});\\
\item[b)]   remove \(k\geq 2\) rows made by a single box and form a row made by \(k\) boxes, if \(k\) is greater than or equal to the number of boxes of  the smallest  row with more than one box;  then, if necessary, rearrange the rows in order to obtain a Young diagram (see Figure \ref{mossabis}). 
\end{itemize}

 \begin{figure}[h]
 \center
\includegraphics[scale=0.25]{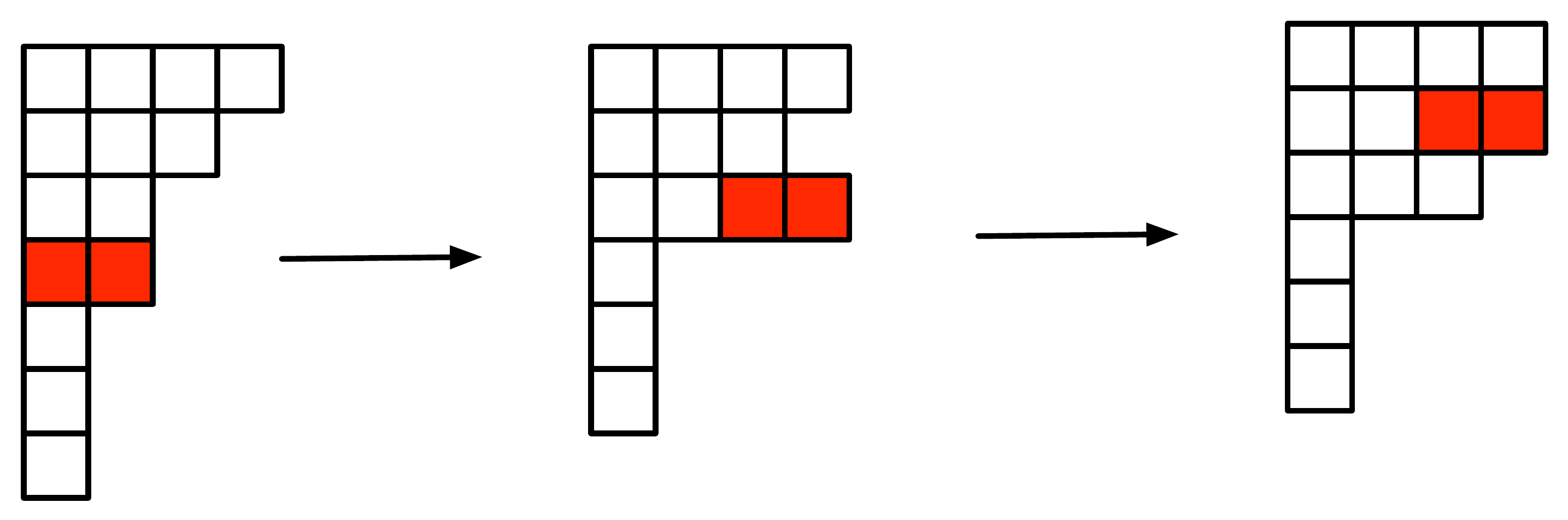}
\caption{Example of an admissible move of type \(a)\).}
\label{mossa}

\end{figure}

 \begin{figure}[h]
 \center
\includegraphics[scale=0.25]{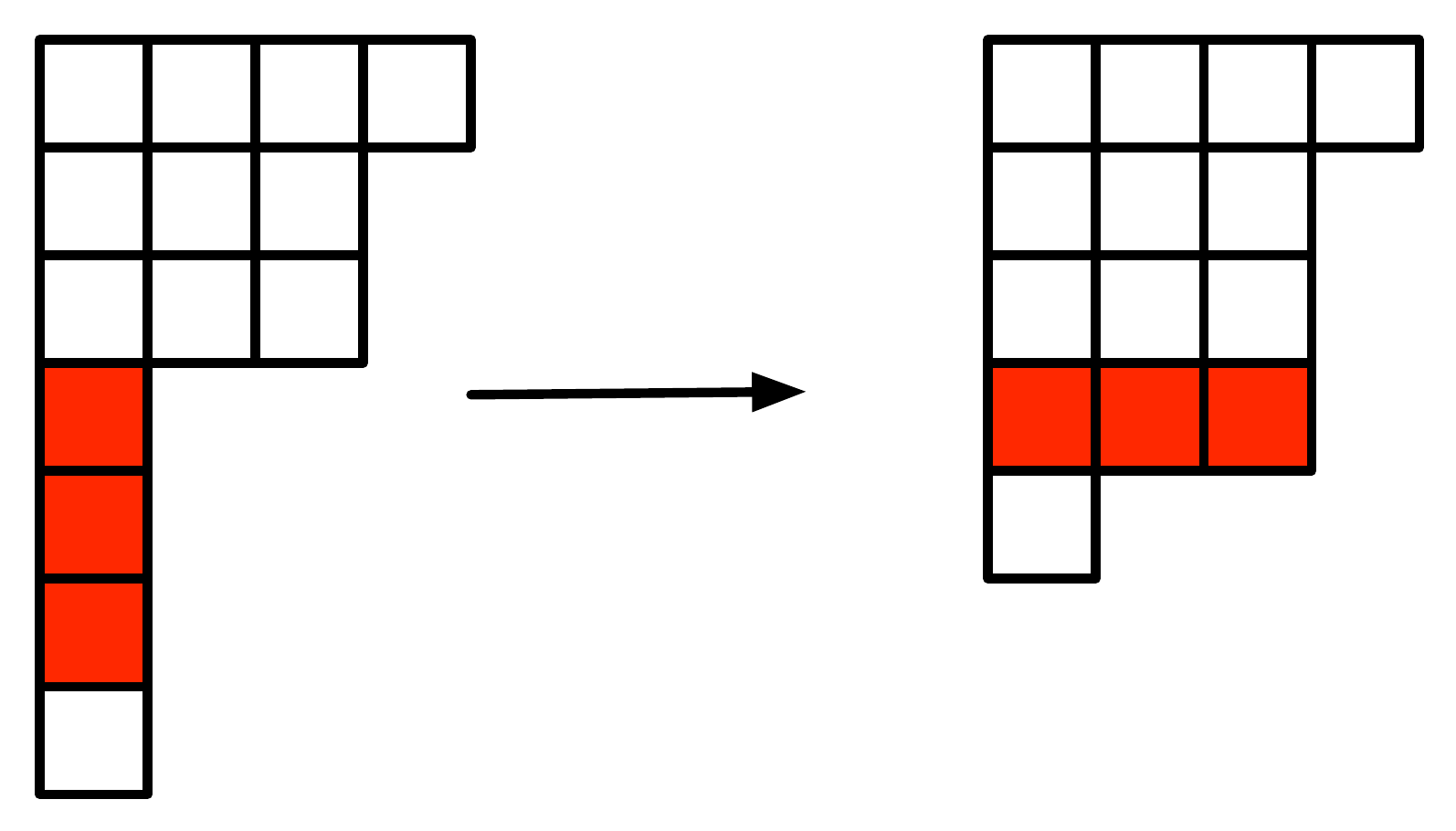}

\caption{Example of an admissible move of type \(b)\). The number of  boxes of the new row is  \( 3\): if it was 2, the move would not be admissible.}
\label{mossabis}
\end{figure}
\begin{rem}
If \(\lambda=(1,1,...,1)\) there are no possible admissible moves.

\end{rem}
Now we equip \(\Lambda_n\) with the following partial order:  \(\lambda \in \Lambda_n\) is greater than  \(\mu\in \Lambda_n\) (we write \(\lambda>\mu\))  if \(\lambda\neq \mu\) and the Young diagram of \(\lambda \) can be obtained by the one of \(\mu\) by a sequence of admissible moves (see Figure \ref{mossadue}).

 \begin{figure}[h]
 \center
\includegraphics[scale=0.25]{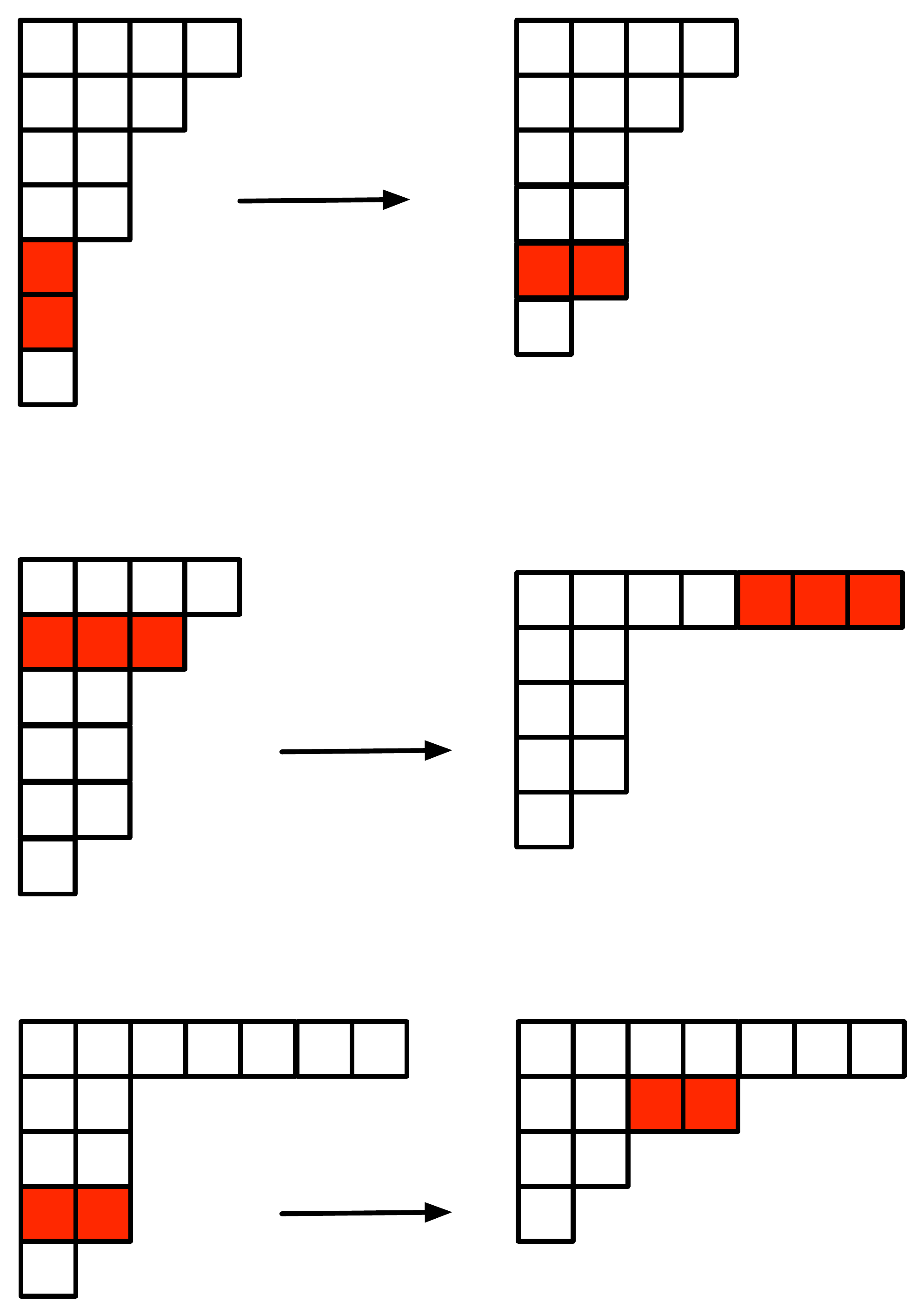}

\caption{A sequence of three admissible moves, starting from the partition \(\mu=(4,3,2,2,1,1,1)\). At the end we obtain \(\lambda=(7,4,2,1)\), therefore \(\lambda>\mu\).}
\label{mossadue}
\end{figure}

In the sequel we will be interested in the subset \(B\Lambda_n\) of \(\Lambda_n\) (\(n\geq 1\))  made by \((n)\) and, if \(n\geq 4\),  by all the partitions   with at least two numbers greater than or equal to 2, i.e.
\(\lambda \in B\Lambda_n\) iff \(\lambda=(n)\) or 
\(\lambda=(\lambda_1,\lambda_2,\ldots,\lambda_k,...)\)   with \(k\geq 2\) and 
\[\lambda_1\geq \lambda_2 \geq \cdots  \geq \lambda_k\geq 2\]
We will call {\em building partitions} the partitions in \(B\Lambda_n\).
The ordering of \(\Lambda_n\) induces a poset structure on \(B\Lambda_n\).

\begin{rem}
\label{remconfronto}
Let \(\gamma, \delta\) be two  building partitions.
If \(\gamma\geq \delta\) one can find a subspace of the form \(\gamma\)
 which contains a subspace of the form \(\delta\).
\end{rem}
\begin{rem}
Let us denote by \(\succeq\)  the well known partial ordering on \(\Lambda_n\) such that 
 \(\mu\succeq \gamma\) if and only if \(\mu_1\geq \lambda_1\) and \(\mu_1+\mu_2 \geq \lambda_1+\lambda_2\) and so on.
 We observe that \(\gamma\geq \lambda\)  implies \(\gamma\succeq \lambda\) but the reverse implication is not true. In fact the ordering \(\succeq\) can be obtained as a result of a set of moves  which includes the moves used to define  \(\geq\): the elementary steps consist in removing  a box from a row of a Young diagram and adding  it  to a higher row.
We notice that, 
for instance, 
\((5,2)\succeq (4,3)\) but  one  cannot find a subspace of the form \((4,3)\) inside a subspace of the form \((5,2)\).

\end{rem}

\begin{rem}
\label{notvee}
Given two partitions  \(\lambda, \gamma\)  in the poset \((B\Lambda_n, \geq )\), it is not true that there exists a minimum element \(\mu\in B\Lambda_n\) such that \(\mu\geq \lambda\) and \(\mu \geq \gamma\). Let us consider for instance \(\lambda=(8,4,4), \gamma=(7,5,3,1)\). The (not comparable) partitions \(\theta= (12,4)\) and \(\rho=(8,8)\) are the minimal partitions in \(B\Lambda_n\) which are  \(\geq \lambda, \gamma\).
Furthermore, it is not true that  there exists a maximum element \(\mu\in B\Lambda_n\) such that \( \theta\geq \mu\) and \(\rho \geq \mu\).
\end{rem}

\section{The \(S_n\) invariant   building sets of type \(A_{n-1}\)}
We are going to  to describe all the  building sets associated to the root arrangement \(A_{n-1}\) which are invariant  with respect to  the natural \(S_n\) action (this is in the spirit of the construction of the compactifications of configuration spaces:  the corresponding wonderful models will have a \(S_n\) equivariant divisor at the boundary).

We start by defining a family of building sets, parametrized   by building partitions.
\begin{defin} Let \(\lambda\) be a building partition. We define \({\widetilde \G}_{\lambda}\) as  the set made by  all the subspaces of the form 
 \(\gamma\in \Lambda_n\) for every \(\gamma\geq\lambda\).  We define \(\G_{\lambda}\) as  \({\mathcal F}_{A_{n-1}}\cup {\widetilde \G}_{\lambda} \). 

\end{defin}

\begin{rem} We notice that, according to the definition, if  \(\lambda=(n)\) then \(\G_{\lambda}\) is the building set of irreducibles \({\mathcal F}_{A_{n-1}}\). The  only     building set  associated to the root system \(A_2\) (i.e. when \(n=3\)) is \(\G_{(3)}={\mathcal F}_{A_{2}}\).   If \(n= 4\), there are two building sets:  the minimal one \(\G_{(4)}={\mathcal F}_{A_{3}}\) and the maximal  one  \(\G_{(2,2)}\).  If \(n> 4\), the maximal building set is \(\G_{(2,2,1,1,1,...)}\).
\end{rem}

It is immediate from the definition that:

\begin{prop}
Given two different building partitions \(\mu\) and  \(\lambda\), the   building set \(\G_\lambda\) is  included in \(\G_\mu\) if and only if \(\lambda>\mu\).
\end{prop}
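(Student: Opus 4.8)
The statement to prove is: given two different building partitions $\mu$ and $\lambda$, we have $\G_\lambda \subseteq \G_\mu$ if and only if $\lambda > \mu$. Since $\G_\lambda = \F_{A_{n-1}} \cup \widetilde{\G}_\lambda$ and $\F_{A_{n-1}}$ is common to both, this is equivalent to $\widetilde{\G}_\lambda \subseteq \F_{A_{n-1}} \cup \widetilde{\G}_\mu$. The plan is to reduce everything to the combinatorics of the poset $(B\Lambda_n, \geq)$ together with the description of $\widetilde{\G}_\nu$ as the set of subspaces whose form is $\geq \nu$.

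First I would prove the ``if'' direction. Suppose $\lambda > \mu$. An element of $\widetilde{\G}_\lambda$ is a subspace of some form $\gamma$ with $\gamma \geq \lambda$. By transitivity of $\geq$ in $\Lambda_n$ we get $\gamma \geq \mu$, so this subspace has a form $\geq \mu$; hence it lies in $\widetilde{\G}_\mu \subseteq \G_\mu$. This shows $\widetilde{\G}_\lambda \subseteq \G_\mu$, and since $\F_{A_{n-1}} \subseteq \G_\mu$ always, we conclude $\G_\lambda \subseteq \G_\mu$. (One should also remark that the inclusion is strict because $\lambda \neq \mu$ forces $\widetilde{\G}_\mu$ to contain subspaces of form $\mu$ that are not in $\G_\lambda$ — but the proposition as stated only claims set inclusion, so this is a side remark.)

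For the ``only if'' direction, assume $\G_\lambda \subseteq \G_\mu$; I want to deduce $\lambda > \mu$. Here I would use the following observation: among all subspaces in $\widetilde{\G}_\lambda$, there are those of form exactly $\lambda$ (taking $\gamma = \lambda$). Pick such a subspace $A$ (of form $\lambda$). Then $A \in \G_\mu = \F_{A_{n-1}} \cup \widetilde{\G}_\mu$. Now I split into cases. If $\lambda \neq (n)$, then a subspace of form $\lambda$ has at least two parts of size $\geq 2$ (or is otherwise reducible), hence is \emph{not} irreducible, so $A \notin \F_{A_{n-1}}$; therefore $A \in \widetilde{\G}_\mu$, which means the form of $A$, namely $\lambda$, satisfies $\lambda \geq \mu$. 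Combined with $\lambda \neq \mu$ this gives $\lambda > \mu$. The remaining case is $\lambda = (n)$: but then $\G_\lambda = \F_{A_{n-1}}$ is the minimal building set, and $\G_\lambda \subseteq \G_\mu$ would force $\F_{A_{n-1}} \subseteq \G_\mu$, which is automatic and does not directly give $\lambda > \mu$ — in fact if $\lambda = (n)$ then no $\mu \neq (n)$ can have $\G_\lambda \supsetneq$ anything, so I should double-check the edge behaviour: since $(n)$ is the top element of $(B\Lambda_n,\geq)$, the claim $\lambda > \mu$ holds for every building partition $\mu \neq (n)$, consistent with $\G_{(n)} = \F_{A_{n-1}}$ being contained in every $\G_\mu$.

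The main subtlety — and the step I expect to be the real content — is the case analysis in the ``only if'' direction, specifically the claim that a subspace of form $\lambda$ with $\lambda$ a building partition $\neq (n)$ is never irreducible (so that it cannot be ``absorbed'' into $\F_{A_{n-1}}$ and must land in $\widetilde{\G}_\mu$). This follows from the explicit description of irreducibles for $A_{n-1}$: an irreducible subspace corresponds to a \emph{single} block of size $\geq 2$, i.e. to a partition of $\{1,\dots,n\}$ with exactly one nonsingleton part; its associated form in $\Lambda_n$ is $(m,1,1,\dots,1)$. A building partition other than $(n)$ has, by definition, at least two parts $\geq 2$, so it is never of this shape. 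Hence the implication goes through cleanly. Once this observation is in place, the rest is just transitivity of the two posets and the bookkeeping above.
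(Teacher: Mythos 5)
Your argument is correct, and it is essentially the argument the paper has in mind: the paper states this proposition as ``immediate from the definition'' with no written proof, and your write-up is just the careful unpacking of that definition (transitivity of $\geq$ for the ``if'' direction, and for the ``only if'' direction the observation that a subspace of form $\lambda\neq(n)$ with two parts $\geq 2$ cannot be irreducible, so it must lie in $\widetilde{\G}_\mu$, forcing $\lambda\geq\mu$). Your handling of the edge case $\lambda=(n)$ and the irreducibility claim are both consistent with the paper's setup, so nothing is missing.
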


 

 \begin{figure}[h!]
 \center
\includegraphics[scale=0.25]{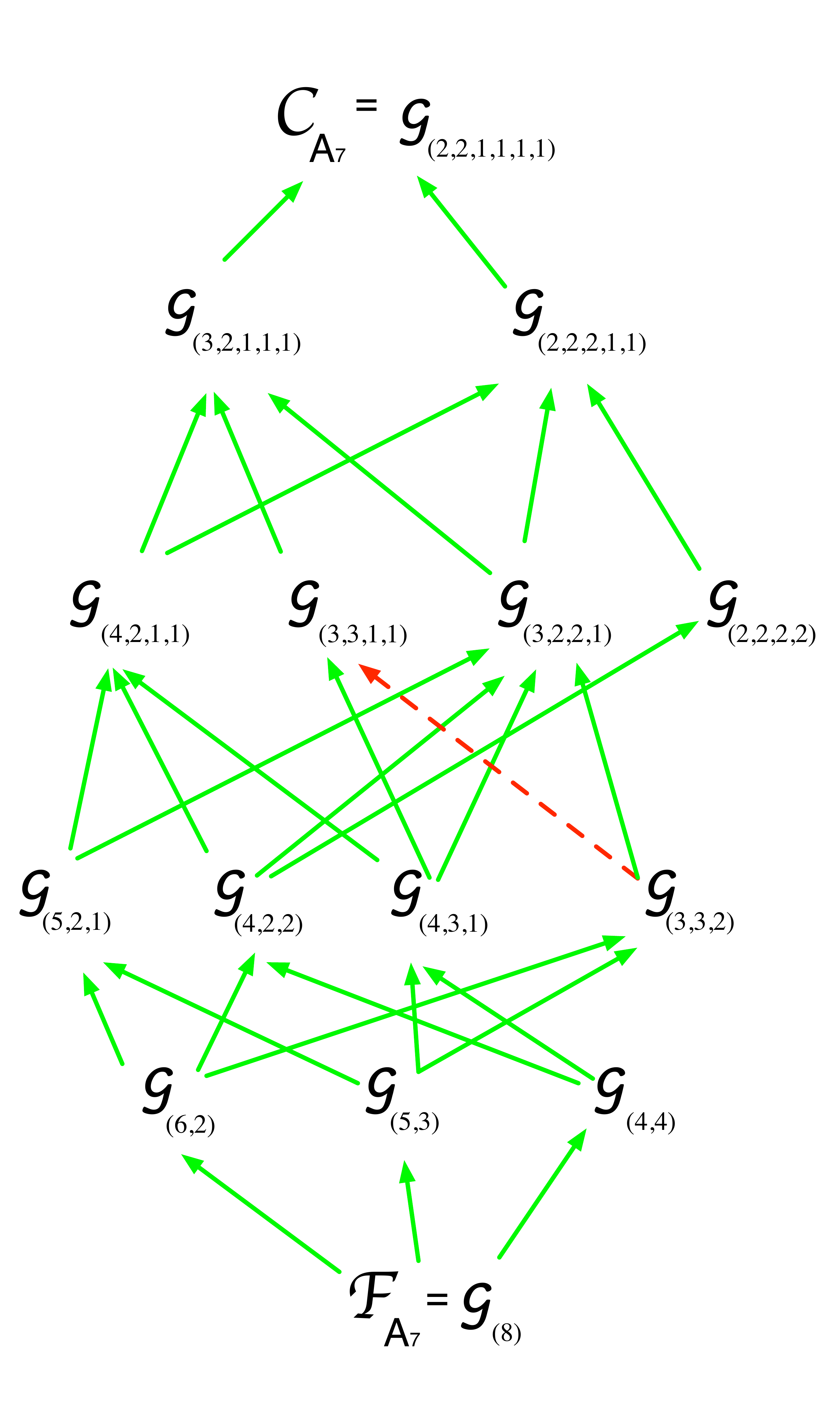}

\caption{The Hasse diagram of the family  of building sets of type \(\G_\lambda\) in the case \(A_7\) (green arrows represent   inclusions).  
\label{famigliabuilding3}
The red  arrow shows a case where there is not inclusion.}
\end{figure}

The   building sets of type \(\G_\lambda\) (\(\lambda\in B\Lambda_n\)) are not the only \(S_n\) invariant building sets which include \({\mathcal F}_{A_{n-1}}\). For instance, in the \(A_5\) case, \(\G_{(4,2)}\cup \G_{(3,3)}\) is an \(S_6\) invariant building set, which does not belong  to  the family \(\G_\lambda\). The following theorem describes all the \(S_n\) invariant building sets.
\begin{defin}
Given a set \(\MS=\{\lambda^1,\lambda^2,...,\lambda^v\} \) of pairwise not comparable elements in \(B\Lambda_n\), we denote by \(\G_{\MS}\) the building set 
\[\G_{\MS}=\G_{\lambda^1}\cup \G_{\lambda^2}\cup...\cup\G_{\lambda^v}\]
We denote by \({\mathcal T}_n\) the set whose elements are the   nonempty sets of pairwise not comparable elements  in \(B\Lambda_n\).
\end{defin}

\begin{teo}
\label{teoclassificazione}
The map:
\[\MS=\{\lambda^1,\lambda^2,...,\lambda^v\}\rightarrow \G_{\MS}\]
is a bijection between \({\mathcal T}_n\)  and the set made by  the \(S_n\) invariant building sets which contain \({\mathcal F}_{A_{n-1}}\).
\end{teo}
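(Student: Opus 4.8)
The plan is to prove the theorem in three steps: first, that $\G_\MS$ is indeed a building set for every $\MS \in \T_n$; second, that the map $\MS \mapsto \G_\MS$ is injective; and third, that it is surjective onto the $S_n$-invariant building sets containing $\F_{A_{n-1}}$.

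\textbf{Step 1: $\G_\MS$ is a building set.} Fix $\MS = \{\lambda^1, \dots, \lambda^v\}$ and set $\G = \G_\MS = \bigcup_i \G_{\lambda^i}$. Since each $\G_{\lambda^i}$ contains $\F_{A_{n-1}}$, so does $\G$, and $\G$ is visibly $S_n$-invariant. First I would check that $\Cc_\G = \Cc_{A_{n-1}}$: the inclusion $\subseteq$ is clear, and $\supseteq$ holds because $\F_{A_{n-1}} \subseteq \G$ and $\Cc_{\F_{A_{n-1}}} = \Cc_{A_{n-1}}$. Then I need the building property: for every $C \in \Cc_{A_{n-1}}$, if $G_1, \dots, G_k$ are the maximal elements of $\G$ contained in $C$, then $C = G_1 \oplus \cdots \oplus G_k$. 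The key observation is combinatorial: an element of $\Cc_{A_{n-1}}$ of the form $\mu$ corresponds to a partition of a subset of $\{1,\dots,n\}$ (together with the implicit singletons), and being contained in $C$ means refining the block structure on each block of $C$. Because the condition ``has the form $\gamma$ for some $\gamma \geq \lambda^i$'' is, by Remark \ref{remconfronto} and the transitivity of $\geq$, stable in the appropriate way under passing to maximal sub-blocks, the maximal elements of $\G$ inside $C$ are exactly the maximal elements of the relevant $\G_{\lambda^i}$'s inside $C$, restricted blockwise; one then reduces to the fact that each $\G_{\lambda^i}$ is itself building (which follows because $\widetilde\G_{\lambda^i}$ is a union of $S_n$-orbits closed upward under $\geq$, hence, by Remark \ref{remconfronto}, every element of $\Cc$ decomposes along its maximal $\G_{\lambda^i}$-elements). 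I would phrase this via the block decomposition of $C$: restricting to a single block of $C$ on which the induced partition is nontrivial, I reduce to showing that a partition of the form $\mu$ is a direct sum of its maximal $\G$-subspaces, and this is where the fact that $\geq$ interacts correctly with ``taking the coarsest proper refinement'' must be made precise.

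\textbf{Step 2: injectivity.} Suppose $\G_\MS = \G_{\MS'}$ with $\MS = \{\lambda^1,\dots,\lambda^v\}$ and $\MS' = \{\mu^1,\dots,\mu^w\}$ both antichains in $B\Lambda_n$. I claim $\MS$ can be recovered from $\G_\MS$ as the set of forms $\lambda$ such that $\lambda$ appears in $\G_\MS$ but is \emph{maximal} among such forms in the order $\geq$ — more precisely, $\lambda \in \MS$ iff the subspaces of the form $\lambda$ lie in $\G_\MS$ and $\lambda$ is not $<$ any other form occurring in $\G_\MS$ other than coming from $\F_{A_{n-1}}$. Indeed, $\widetilde\G_{\lambda^i}$ consists exactly of the forms $\gamma \geq \lambda^i$, so the minimal forms (in $\geq$) occurring in $\bigcup_i \widetilde\G_{\lambda^i}$ but not forced by the singleton/irreducible forms are precisely the $\lambda^i$ (using that the $\lambda^i$ are pairwise incomparable, none is $\geq$ another, so none is swallowed). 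Hence the antichain $\MS$ is the set of $\geq$-minimal non-irreducible forms in $\G_\MS$, and the same description applies to $\MS'$, giving $\MS = \MS'$. A minor point to handle: forms in $\F_{A_{n-1}}$ correspond to the single-block partitions, i.e. to partitions of type $(k,1,\dots,1)$; these must be excluded from the ``minimal form'' computation, which is why the hypothesis $v \geq 1$ and the restriction to building partitions (at least two parts $\geq 2$, or $(n)$) matters.

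\textbf{Step 3: surjectivity.} Let $\G$ be any $S_n$-invariant building set with $\F_{A_{n-1}} \subseteq \G$. Let $\MS$ be the set of $\geq$-minimal elements among the forms $\gamma$ (with $\gamma$ a building partition, i.e. excluding the irreducible forms already in $\F_{A_{n-1}}$) such that some subspace of the form $\gamma$ lies in $\G$; $\MS$ is an antichain, so $\MS \in \T_n$ provided it is nonempty — and if it is empty then $\G = \F_{A_{n-1}} = \G_{(n)}$, handled separately. I must show $\G = \G_\MS$. The inclusion $\G_\MS \subseteq \G$ requires showing that if a subspace of the form $\lambda^i \in \MS$ is in $\G$ then every subspace of the form $\gamma \geq \lambda^i$ is in $\G$: this should follow from $S_n$-invariance plus the building-set axioms by running the admissible moves of types a) and b) one at a time, checking that each single admissible move, applied inside a building set, keeps us inside the building set — concretely, if $\G$ contains (all $S_n$-translates of) a subspace of a given form and we perform one admissible move to reach a coarser form $\gamma'$, then $\gamma'$ must also occur in $\Cc_\G = \Cc_{A_{n-1}}$, and the building property together with $S_n$-invariance forces the subspace of form $\gamma'$ into $\G$. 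This ``one move at a time'' lemma is the technical heart and I expect it to be the \textbf{main obstacle}: it amounts to showing that the closure of a single non-irreducible form under the building-set axioms and the $S_n$-action is exactly the upward $\geq$-closure of that form. For the reverse inclusion $\G \subseteq \G_\MS$: any subspace in $\G$ either is irreducible, hence in $\F_{A_{n-1}} \subseteq \G_\MS$, or has some building-partition form $\gamma$; by minimality of $\MS$ there is $\lambda^i \in \MS$ with $\gamma \geq \lambda^i$, so that form lies in $\widetilde\G_{\lambda^i} \subseteq \G_\MS$, and since $\G_\MS$ is $S_n$-invariant and contains one subspace of form $\gamma$ it contains all of them, hence the given subspace. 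Combining the two inclusions finishes surjectivity, and together with Steps 1–2 the theorem follows.
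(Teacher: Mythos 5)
The decisive gap is in your Step 3. You reduce surjectivity to the claim that if an $S_n$-invariant building set $\B\supseteq \F_{A_{n-1}}$ contains all subspaces of a form $\gamma$, then it contains all subspaces of any form $\mu$ obtained from $\gamma$ by a single admissible move — and you explicitly leave this ``one move at a time'' lemma unproved, calling it the technical heart and the main obstacle. That lemma is precisely the content of the paper's proof, and it is established there by an explicit construction you never supply: for a move of type a) merging the parts $\gamma_j$ and $\gamma_t$, one chooses two subspaces $C,D\in\B$ of form $\gamma$ that differ only by exchanging one index between the $j$-th and $t$-th blocks; then $C+D$ has exactly the form $\mu$, and since $C+D\in\Cc_\B$ must be the direct sum of the maximal elements of $\B$ contained in it, while $C$ and $D$ overlap nontrivially, this is possible only if $C+D\in\B$; $S_n$-invariance then yields all subspaces of form $\mu$, and an analogous two-translate argument handles moves of type b). Your sketch (``the building property together with $S_n$-invariance forces the subspace of form $\gamma'$ into $\G$'') is the statement to be proved, not an argument for it; note that any proof must actually use admissibility of the move, since non-admissible coarsenings (e.g.\ merging two singletons into a $2$-block smaller than the smallest nontrivial part) are \emph{not} forced — otherwise $\F_{A_{n-1}}$ itself would already violate the claim. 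Without this construction, the inclusion $\G_\MS\subseteq\B$, hence surjectivity, is not established.

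Two secondary points. Your Step 1 (that $\G_\MS$ is building) is likewise only a sketch — you concede the interaction of $\geq$ with taking maximal sub-blocks ``must be made precise''; to be fair, the paper does not prove this either (it is built into the definition of $\G_\MS$), so this is not where you diverge from the paper, but a complete write-up would require an argument of the same explicit kind as above. Step 2 is essentially correct and coincides in substance with the paper's uniqueness argument: the $\lambda^i$ are recovered as the $\geq$-minimal forms of elements of $\G_\MS-\F_{A_{n-1}}$; your wording wavers between ``maximal'' and ``minimal'', and it is the minimality characterization (which your later sentences do use) that is the right one, with the case $\MS=\{(n)\}$, i.e.\ $\G_\MS=\F_{A_{n-1}}$, treated separately as you indicate.
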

\begin{proof}
Let us consider a \(S_n\) invariant building set \(\B\) which contains \({\mathcal F}_{A_{n-1}}\).  If it is different from  \({\mathcal F}_{A_{n-1}}\) we consider the subspaces in \(\B-{\mathcal F}_{A_{n-1}}\): to each of these subspaces  we associate the partition in \(B\Lambda_n\) which describes its form and, among these partitions, we choose the minimal ones (with respet to \(\geq\)).

Let \(A\) be   a subspace in \(\B-{\mathcal F}_{A_{n-1}}\) whose associated partition \(\gamma= (\gamma_1,\gamma_2,\ldots , \gamma_s)\in B\Lambda_n\) is minimal.   Then, by \(S_n\) invariance, \(\B\) contains all the subspaces of this form. We will show that \(\G_{\gamma}\subseteq \B\). For this it suffices to show that if \(\mu \in B\Lambda_n\) can be obtained from \(\gamma\) by an admissible move, than \(\B\) contains all the subspaces of the form \(\mu\).
Let us consider moves of type \(a)\):  then   \(\mu=(\mu_1,\mu_2,\ldots, \mu_i,..., \mu_{s-1})\) where the numbers \(\mu_l\) coincide with the \(\gamma_h\) except for \(\mu_i=\gamma_j+\gamma_t\), where \(\gamma_j>1\).  Now we take   two subspaces which are of the form \(\gamma\) and therefore belong to  \(\B\):
\[C=\underset{\gamma_1}{ \{\ldots  \}}\underset{\gamma_2}{ \{ \ldots  \}}...\underset{\gamma_j}{ \{1,2,3,...,\gamma_j\}}\cdots \underset{\gamma_t}{ \{\gamma_j+1,\gamma_j+2,...,\gamma_j+\gamma_t\}}\cdots \underset{\gamma_s}{ \{ \ldots  \}}\]
\[D=\underset{\gamma_1}{ \{\ldots  \}}\underset{\gamma_2}{ \{ \ldots  \}}...\underset{\gamma_j}{ \{\gamma_j+1,2,3,...,\gamma_j\}}\cdots \underset{\gamma_t}{ \{1,\gamma_j+2,...,\gamma_j+\gamma_t\}}\cdots \underset{\gamma_s}{ \{ \ldots  \}}\]
The sum \(C+D\) is the subspace
\[C+D=\underset{\gamma_1}{ \{\ldots  \}}\underset{\gamma_2}{ \{ \ldots  \}}...\underset{\gamma_j+\gamma_t}{ \{1,2,3,...,\gamma_j,\gamma_j+1,\gamma_j+2,...,\gamma_j+\gamma_t\}}\cdots \underset{\gamma_s}{ \{ \ldots  \}}\]
By definition of building set, \(C+D\) must be the direct sum of the maximal subspaces in  \(\B\) contained in it. This is possible only if \(C+D \in \B\).
We have shown that \(\B\) contains a subspace of the form \(\mu\), and therefore it contains all such subspaces.

As for the moves  of type \(b)\), let \(\gamma_k\) be the last part  which is \(>1\) of \(\gamma \in B\Lambda_n\).  As a particular case, we first show that if \(\mu\) is obtained from \(\gamma\) by deleting \(\gamma_k\) parts equal to \(1\) and adding a part equal to \(\gamma_k\) then  \(\B\) contains a subspace of the form \(\mu\). The argument is similar to the one above. For instance, if \(\gamma=(4,3,1,1,1,1)\) and \(\mu=(4,3,3,1)\),  one then considers the two subspaces:
\[C=\{1,2,3,4\}\{5,6,7\}\{8\}\{9\}\{10\}\{11\}\]
\[D=\{1,2,3,4\}\{8,9,10\}\{5\}\{6\}\{7\}\{11\}\]
The sum \(C+D\) is the subspace
\[C+D=\{1,2,3, 4\}\{5,6,7\}\{8,9,10\}\{11\}\]
which must belong to \(\B\) and has the form \(\mu\).
Combining the result in this particular case with  the result for moves of the first type,   it is now easy to prove that  if \(\mu \in B\Lambda_n\) can be obtained from \(\gamma\) by any  admissible move of the second type, than \(\B\) contains all the subspaces of the form \(\mu\).

Let \(S=\{\gamma^1,\gamma^2, ...,\gamma^v\}\) be  the set of the (pairwise not comparable)  minimal partitions associated to the subspaces in \(\B-{\mathcal F}_{A_{n-1}}\). Repeating the  argument described  above  we can prove that \(\B\) contains \(\G_{\gamma^1}\cup \G_{\gamma^2}\cup...\cup\G_{\gamma^v}\).

To show the reverse inclusion,  let us consider  \(D\in \B-{\mathcal F}_{A_{n-1}}\). If \(D\) is associated to a minimal partition, say \(\gamma^1\),  then  \(D\in \G_{\gamma^1}\) by definition.
If the partition \(\gamma\) associated to \(D\) is not minimal, then for a certain \(i\) we have \(\gamma>\gamma^i\).  By  definition of \(\G_{\gamma^i}\) we know that \(D\in \G_{\gamma^i}\):  this concludes  the proof that \(\B=\G_{\gamma^1}\cup \G_{\gamma^2}\cup...\cup\G_{\gamma^v}\).

Now we must show that the above expression for \(\B\) is unique, i.e., if \(\B=\G_{\lambda^1}\cup \G_{\lambda^2}\cup...\cup\G_{\lambda^v}\) and \(\B=\G_{\theta^1}\cup \G_{\theta^2}\cup...\cup\G_{\theta^r}\) then \(r=v\) and, up to reordering, \(\theta^i=\lambda^i\) \(\forall i\).
Let us suppose that \(\B\neq {\mathcal F}_{A_{n-1}}\) (otherwise the statement is trivial).

First we observe that if \(\lambda^1\) is  not \(\geq\) of  one of the partitions  \(\theta^1,\theta^2,...,\theta^r\), then in  \(\G_{\theta^1}\cup \G_{\theta^2}\cup...\cup\G_{\theta^r}\) there are not elements of the form \(\lambda^1\). This is a contradiction. Therefore we must have, say,  \(\lambda^1\geq \theta^1\). The same argument shows that there exists \(i\) such that \(\theta^1\geq \lambda^i\). This implies \(\lambda^1\geq \lambda^i\), and since the elements \(\lambda^1,\lambda^2,...,\lambda^v\) are pairwise not comparable, we must have \(i=1\) and \(\lambda^1\geq \theta^1\geq\lambda^1\), that is to say, \(\lambda^1=\theta^1\).
The claim follows by induction.

\end{proof}

\section{The Poincar\'e polynomial for the maximal model (case \(A_{n-1}\))}
\label{secformulamax}
In this section we  provide a formula for  the Poincar\'e polynomial  of the maximal model \(Y_{\G_{(2,2,1,1,....)}}=Y_{max,n}\):
\[P(Y_{max,n})(q)=\sum_i dim \ H^{2i}(Y_{max,n}, \Q) q^i\]
We   use  a combinatorial strategy, different from the one in  \cite{GaiffiServenti},   which in the next sections will be generalized  in many ways  (i.e. it will be applied to different models and to different root arrangements).

  Let $G$ be a minimal (with respect to inclusion) 
element in a building set $\G$. Let $\G'=\G-\{G\}$, and 
	let $\oG$  be the  family in ${\displaystyle \left(\C^n 
	\right)^{\ast} /G}$ 
	given by the elements $\{ (A+G)/G\: : \: A\in \G'\}$. 
	In \cite{DCP1} and  \cite{Gaiffitesi}  it is shown  that $\G'$ and 
	$\oG$ are building and that $Y_{\G}$ can be obtained  by blowing up $Y_{\G'}$  along a subvariety isomorphic to $Y_{\oG}$.
	
 This implies that, denoting  by $p$ the blowing up map ${\displaystyle p\: : 
 \: Y_{\G} \mapsto Y_{\G'} }$, we have 
 \begin{displaymath}
 	H^{\ast}(Y_{\G}, \Z)\cong p^{\ast}H^{\ast}(Y_{\G'}, \Z) \oplus 
 	\left( H^{\ast}(E, \Z)/p^{\ast}H^{\ast}(Y_{\oG}, \Z) \right )
 \end{displaymath}
 The exceptional divisor $E$  is isomorphic to the 
 projectivization of the  normal bundle of $Y_{\oG}$ in $Y_{\G'}$.
 Then $H^{\ast}(E,\Z)$ is 
 generated, as $p^{\ast}H^{\ast}(Y_{\oG}, \Z)$-algebra, by the Chern 
 class $\zeta=c_1(T)$ of the tautological line bundle  $T\mapsto E$.
 Furthermore the class $\zeta$ has in $H^{\ast}(E,\Z)$ the unique relation
 provided by the Chern polynomial of the normal bundle 
 $N_{Y_{\oG}/Y_{\G'}}$.  This proves the following proposition where we  denote by 
 $P (Y)(q)  $  (\(q\) has degree 2) the Poincar\`e polynomial  of a model \(Y\): 
 
\begin{prop}
\label{propindu}
 Let $G$ be a minimal (with respect to inclusion) 
element in a building set $\G$. Then: 
 \begin{displaymath}
 	P (Y_{\G})(q)=P(Y_{\G'})(q)+\frac{q^{dim\, G }-q}{q-1}P(Y_{\oG})(q)
 \end{displaymath}
 \end{prop}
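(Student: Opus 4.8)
The plan is to read the claimed formula directly off the cohomology decomposition stated just before the proposition, which is the standard blow-up formula
\[
H^{*}(Y_{\G},\Z)\cong p^{*}H^{*}(Y_{\G'},\Z)\oplus\bigl(H^{*}(E,\Z)/p^{*}H^{*}(Y_{\oG},\Z)\bigr).
\]
Passing to Poincar\'e polynomials (which is legitimate because all these cohomology rings are torsion free and concentrated in even degrees, as recalled in Section \ref{subsecwonderful}), this gives
\[
P(Y_{\G})(q)=P(Y_{\G'})(q)+P(E)(q)-P(Y_{\oG})(q),
\]
where the minus sign comes from quotienting out the image of $p^{*}H^{*}(Y_{\oG},\Z)$ inside $H^{*}(E,\Z)$.

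Next I would compute $P(E)(q)$. Since $E$ is the projectivization $\Proj(N)$ of the normal bundle $N=N_{Y_{\oG}/Y_{\G'}}$, and $Y_{\oG}$ has codimension $\dim G$ in $Y_{\G'}$ (because $Y_{\oG}$ lives over $\C^{n}$ quotiented by $G$, so the fibre of the blow-up has projective dimension $\dim G -1$), the bundle $N$ has rank $\dim G$. Then $E\to Y_{\oG}$ is a $\Proj^{\dim G-1}$-bundle, so by the projective bundle formula $H^{*}(E,\Z)$ is a free $H^{*}(Y_{\oG},\Z)$-module with basis $1,\zeta,\zeta^{2},\dots,\zeta^{\dim G-1}$, where $\zeta=c_{1}(T)$. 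Hence
\[
P(E)(q)=P(Y_{\oG})(q)\,\bigl(1+q+q^{2}+\cdots+q^{\dim G-1}\bigr)=P(Y_{\oG})(q)\,\frac{q^{\dim G}-1}{q-1}.
\]
Substituting into the displayed identity above yields
\[
P(Y_{\G})(q)=P(Y_{\G'})(q)+P(Y_{\oG})(q)\left(\frac{q^{\dim G}-1}{q-1}-1\right)=P(Y_{\G'})(q)+\frac{q^{\dim G}-q}{q-1}\,P(Y_{\oG})(q),
\]
which is exactly the claimed formula.

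The only points that need care, and hence the main (mild) obstacle, are justifying that $\G'$ and $\oG$ are again building sets so that $Y_{\G'}$ and $Y_{\oG}$ make sense, and that the blow-up locus $Y_{\oG}\hookrightarrow Y_{\G'}$ has the stated codimension $\dim G$; both of these are exactly the results of \cite{DCP1} and \cite{Gaiffitesi} quoted in the paragraph preceding the proposition, so I may simply invoke them. One should also observe that the quotient $H^{*}(E,\Z)/p^{*}H^{*}(Y_{\oG},\Z)$ appearing in the decomposition is the free submodule spanned by $\zeta,\zeta^{2},\dots,\zeta^{\dim G-1}$ — i.e. the single Chern relation of $N$ in degree $\dim G$ is precisely what cuts the rank down from $\dim G$ "naively available powers $1,\dots,\zeta^{\dim G}$" to the $\dim G$ classes forming the module basis, and then removing the copy of $p^{*}H^{*}(Y_{\oG})$ generated by $\zeta^{0}=1$ leaves the $\frac{q^{\dim G}-q}{q-1}$ factor. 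No further computation is required.
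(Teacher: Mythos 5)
Your proposal is correct and follows essentially the same route as the paper: the paper's proof is precisely the preceding paragraph's blow-up decomposition $H^{*}(Y_{\G},\Z)\cong p^{*}H^{*}(Y_{\G'},\Z)\oplus\bigl(H^{*}(E,\Z)/p^{*}H^{*}(Y_{\oG},\Z)\bigr)$ together with the description of $H^{*}(E,\Z)$ as a free $p^{*}H^{*}(Y_{\oG},\Z)$-module generated by powers of $\zeta$ subject to the single Chern relation of the rank-$\dim G$ normal bundle, which yields the factor $\frac{q^{\dim G}-q}{q-1}$ exactly as you compute. Your only looseness is the phrase that $Y_{\oG}$ ``lives over $\C^{n}$ quotiented by $G$'' (it lives over $G^{\perp}\subset\C^{n}$, dual to $V^{*}/G$), but the resulting codimension count $\dim G$ is correct.
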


 \begin{teo}
 \label{formulamax}
 For \(n\geq 2\), we have the following inductive formula for the Poincar\'e polynomial of the maximal model \(Y_{\G_{(2,2,1,1,....)}}=Y_{max,n}\):
 \[P(Y_{max,n})(q)=1+\sum_{
\begin{array}{c}
\lambda \in \Lambda_n  \\
\lambda\neq (1,1,1,...,1) 
\end{array}
}\frac{q^{n-l(\lambda)}-q}{q-1} t_{\lambda} P(Y_{max,l(\lambda)})(q)\]
  where \(l(\lambda)\) is the {\em lenght} of the partition  \(\lambda\) (the number of parts) and \(t_\lambda\) is the number of subspaces whose  form is  \(\lambda\).
 \end{teo}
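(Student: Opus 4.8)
The plan is to compute $P(Y_{max,n})$ by stripping the maximal building set $\Cc_{A_{n-1}}$ of one subspace at a time, using Proposition \ref{propindu} repeatedly. List the elements of $\Cc_{A_{n-1}}$ as $G_1,\dots,G_N$ in non-decreasing order of dimension (first the $\binom{n}{2}$ subspaces of dimension $1$, i.e.\ those of form $(2,1,\dots,1)$, then those of dimension $2$, and so on up to the unique subspace of dimension $n-1$, of form $(n)$). Put $\G^{(0)}=\Cc_{A_{n-1}}$ and $\G^{(i)}=\G^{(i-1)}\setminus\{G_i\}$. At each stage $G_i$ is a minimal element of $\G^{(i-1)}$ — any subspace strictly contained in it has smaller dimension and, by the chosen order, has already been removed — so, inductively, each $\G^{(i)}$ is again building by \cite{DCP1}, and Proposition \ref{propindu} yields
\[P(Y_{\G^{(i-1)}})(q)=P(Y_{\G^{(i)}})(q)+\frac{q^{\dim G_i}-q}{q-1}\,P(Y_{\oG_i})(q),\]
where $\oG_i=\{(A+G_i)/G_i:A\in\G^{(i)}\}\subset(\C^n)^{*}/G_i$. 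Telescoping over $i$ and using that $\G^{(N)}=\emptyset$, so that $Y_{\emptyset}=\C^n$ has $P(Y_{\emptyset})(q)=1$, gives $P(Y_{max,n})(q)=1+\sum_{i=1}^{N}\frac{q^{\dim G_i}-q}{q-1}P(Y_{\oG_i})(q)$.

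The heart of the proof is the identification of each $Y_{\oG_i}$. Let $\lambda$ be the form of $G_i$; the set partition attached to $G_i$ has exactly $l(\lambda)$ blocks, so $\dim G_i=n-l(\lambda)$, and $(\C^n)^{*}/G_i$ is canonically $(\C^{l(\lambda)})^{*}$, coordinatised by the blocks of $G_i$. For $A\in\G^{(i)}$ the subspace $(A+G_i)/G_i$ is the braid-type subspace attached to the partition of the block-set induced by $A$, whence $\oG_i\subseteq\Cc_{A_{l(\lambda)-1}}$. Conversely, given any partition $\bar\rho$ of the block-set with a part of size $\ge 2$, let $A$ be the element of $\Cc_{A_{n-1}}$ obtained by coarsening $G_i$'s partition according to $\bar\rho$: then $G_i\subseteq A$, so $(A+G_i)/G_i=A/G_i$ is exactly the subspace attached to $\bar\rho$, and $\dim A>\dim G_i$ since $\bar\rho$ is not discrete. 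Because subspaces are deleted in non-decreasing order of dimension, every subspace of dimension $>\dim G_i$ is still present in $\G^{(i)}$, so this $A$ lies in $\G^{(i)}$. Hence $\oG_i$ is the full maximal building set on the $l(\lambda)$-element block-set, and since De Concini--Procesi models depend only on the building set (and on the ambient identification), $Y_{\oG_i}\cong Y_{max,l(\lambda)}$.

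It remains to assemble the sum. By the above, the contribution of $G_i$ equals $\frac{q^{\,n-l(\lambda)}-q}{q-1}P(Y_{max,l(\lambda)})(q)$ and depends only on the form $\lambda$ of $G_i$. Grouping the $N$ summands by form — there are exactly $t_\lambda$ subspaces of a given form $\lambda$, and the forms occurring are precisely the partitions $\lambda\in\Lambda_n$ with $\lambda\neq(1,1,\dots,1)$ — we obtain
\[P(Y_{max,n})(q)=1+\sum_{\substack{\lambda\in\Lambda_n\\ \lambda\neq(1,1,\dots,1)}}\frac{q^{\,n-l(\lambda)}-q}{q-1}\,t_\lambda\,P(Y_{max,l(\lambda)})(q),\]
which is the asserted formula. (Consistently, the form $(2,1,\dots,1)$ contributes $0$, reflecting that blowing up along a divisor leaves cohomology unchanged, and for $n=2$ the formula reduces to $P(Y_{max,2})(q)=1$.)

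The hard part, and the step needing care, is the \emph{uniform} identification $\oG_i\cong\Cc_{A_{l(\lambda)-1}}$ at every stage, irrespective of the order chosen among equidimensional subspaces; this is exactly what the non-decreasing ordering secures, since the coarsenings used as preimages always have strictly larger dimension than $G_i$ and hence survive in $\G^{(i)}$. A secondary, routine point is to make the identifications $(\C^n)^{*}/G_i\cong(\C^{l(\lambda)})^{*}$ and $\oG_i\cong\Cc_{A_{l(\lambda)-1}}$ precise at the level of arrangements, so that the induced isomorphism of wonderful models is legitimate.
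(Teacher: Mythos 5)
Your argument is correct and follows essentially the same route as the paper: repeated application of Proposition \ref{propindu}, removing subspaces in non-decreasing order of dimension so that each removed element is minimal, identifying each quotient building set with the maximal building set of type \(A_{l(\lambda)-1}\) via the block (``free variables'') description and the fact that coarsenings have strictly larger dimension and hence survive, and finally regrouping the contributions by the form \(\lambda\). Your write-up is in fact slightly more explicit than the paper's on the two inclusions \(\oG_i\subseteq\Cc_{A_{l(\lambda)-1}}\) and the converse, but the idea is the same.
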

\begin{proof}
We obtain this formula   by  applying  Proposition \ref{propindu} several times. 
We start by choosing  a  minimal subspace in the building set \(\G_{(2,2,1,1,....)}\), then a minimal subspace in the building set \(\G_{(2,2,1,1,....)}'\) and so on.

The key observation is that  the `quotient' building sets which are produced by this process are all isomorphic to maximal building sets.
More precisely, let us suppose that, at a certain step, we have the building set \(\G_{(2,2,1,1,....)}''=\G_{(2,2,1,1,....)}-\{ \hbox{some subspaces} \}\). The  dimension of the deleted subspaces can be  bounded, since   at every step we have to remove a minimal subspace, so at first we can remove the subspaces of dimension 2, then the subspaces of dimension 3, and so on. Let us therefore suppose that in \(\G_{(2,2,1,1,....)}''\) the deleted subspaces are of dimension \(\leq j\). Now we remove a minimal subspace \(A\) from \(\G_{(2,2,1,1,....)}''\): if there still are subspaces of dimension \(j\) in \(\G_{(2,2,1,1,....)}''\) then \(A\) has dimension \(j\), otherwise \(A\) has dimension   \(j+1\).  Let \(A\) have   the form \(\lambda\). Now let us consider \(\overline   {\G''}\): it is isomorphic to a  building set associated to  the arrangement \(A_{l(\lambda)-1}\). In fact we can think of  the quotient space as the space where some groups of variables are equal: there are only \(l(\lambda)\) `free variables'. From this point of view,  it is easy to check that  \(\overline   {\G''}\) is   the maximal building set of type \(A_{l(\lambda)-1}\):  every subspace in this maximal building set can be obtained  as a quotient \(B+A/A\), where we can choose \(B\in \G_{(2,2,1,1,....)}\) with  \(dim \ B > dim \ A\), so \(B\in \G_{(2,2,1,1,....)}''\). 
\end{proof}

 \begin{rem}
We put \(P(Y_{max,1})(q)=1\) as a base for the induction. Then we observe that \(P(Y_{max,2})(q)=1\) and \( P(Y_{max,3})(q)=q+1\).
\end{rem}

From this  inductive formula we can write  \(P(Y_{max,n})(q)\) as an explicit sum of polynomials whose coefficients are expressed in terms of the Stirling numbers of the second kind.
\begin{defin}
Given two positive integers \(n>j\), let us denote by \(f_{n,j}(q)\) the polynomial \(S(n,j)\frac{q^{n-j}-q}{q-1} \; \),    where \(S(n,j)=\frac{Surg(n,j)}{j!}\) is the Stirling number of the second kind. 

\end{defin}
\begin{teo}
\label{ulterioremiglioramentoformulamax}

 For \(n\geq 3\),   we have:
 \[P(Y_{max,n})(q)=1+\sum_{\begin{array}{c}
 1\leq k\leq \lfloor \frac{n-1}{2} \rfloor\\
(j_1,j_2,...,j_k)\in J_k(n-2)
\end{array}}f_{j_2,j_1}(q)f_{j_3,j_2}(q)\cdots f_{j_k,j_{k-1}}(q)f_{n,j_k}(q) \]
where \(J_k(n-2)\) is the set of all the lists \((j_1,j_2,...,j_k)\) of integers such that \(1\leq j_1<j_2<\cdots <j_k\leq n-2\) and, for every \(i=1,2,...,k\), \(j_i-j_{i-1}\geq 2\).

\end{teo}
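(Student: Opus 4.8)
The plan is to unwind the inductive formula of Theorem \ref{formulamax} by repeated substitution, keeping track of the combinatorics with Stirling numbers. First I would set $u_n(q) := P(Y_{max,n})(q)$ and rewrite Theorem \ref{formulamax} by grouping the partitions $\lambda\in\Lambda_n$, $\lambda\neq(1,\dots,1)$, according to their length $l(\lambda)=j$. A partition of $n$ of length $j$ corresponds to an ordered set partition of $\{1,\dots,n\}$ into $j$ nonempty blocks, up to the order of the blocks; hence the total number of subspaces of type $\lambda$ summed over all $\lambda$ with $l(\lambda)=j$ equals $S(n,j)$, the Stirling number of the second kind (here one uses that $t_\lambda$ counts unordered set partitions of the prescribed shape, and summing over shapes of fixed length gives all unordered set partitions into $j$ blocks, which is $S(n,j)$). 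Therefore
\[
u_n(q) \;=\; 1 \;+\; \sum_{j=1}^{n-1} S(n,j)\,\frac{q^{\,n-j}-q}{q-1}\,u_j(q)\;=\;1+\sum_{j=1}^{n-1} f_{n,j}(q)\,u_j(q),
\]
with the convention $u_1=1$ and noting $f_{n,n-1}(q)=0$ and $f_{n,1}(q)=\frac{q^{n-1}-q}{q-1}$ since $S(n,1)=1$. This is the recursion I would iterate.

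Next I would iterate: substitute $u_j(q)=1+\sum_{i<j}f_{j,i}(q)u_i(q)$ inside the sum, then substitute again, and so on until every $u$ has been expanded down to $u_{j_1}=1$. Each term produced in this process is indexed by a strictly increasing chain $j_1<j_2<\cdots<j_k<n$ (the successive lengths met along the iteration), and contributes the product $f_{j_2,j_1}(q)f_{j_3,j_2}(q)\cdots f_{j_k,j_{k-1}}(q)f_{n,j_k}(q)$ (the leading $1$ coming from the final $u_{j_1}=1$). The constant term $1$ in the outer formula is exactly the $k=0$ (empty chain) contribution, which I would keep separate. The only subtlety is the range of the chain indices: since $f_{m,m-1}(q)=S(m,m-1)\frac{q^{1}-q}{q-1}=0$, any chain with two consecutive indices differing by exactly $1$ contributes zero, so one may restrict to chains with $j_i-j_{i-1}\geq 2$ for all $i$; likewise $j_k\le n-2$ (if $j_k=n-1$ the factor $f_{n,n-1}$ vanishes) and $j_1\ge 1$. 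Counting, a chain $1\le j_1<\cdots<j_k\le n-2$ with gaps $\ge 2$ exists only for $k\le\lfloor (n-1)/2\rfloor$; this set of chains is exactly $J_k(n-2)$. Collecting, one obtains precisely the stated formula.

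The cleanest way to make the iteration rigorous, and the step I expect to require the most care, is to prove the closed form by induction on $n$ rather than by an informal "unwinding": assume the formula for all $m<n$, plug $u_j(q)$ (in its closed form) into $u_n(q)=1+\sum_{j=1}^{n-1}f_{n,j}(q)u_j(q)$, and check that the terms reorganize correctly. Concretely, a chain contributing to $u_j$ together with the extra factor $f_{n,j}$ is a chain contributing to $u_n$ whose top below $n$ is $j$; one must verify that as $j$ ranges over $1,\dots,n-1$ and over all admissible chains below $j$, one recovers each chain in $\bigcup_k J_k(n-2)$ exactly once, plus the bare term $f_{n,j}(q)\cdot 1$ (the $k=1$ chains $(j)$) and the constant $1$. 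The gap condition $j_i-j_{i-1}\ge 2$ is automatically enforced because the factors $f_{m,m-1}$ vanish, so no separate bookkeeping of "forbidden" chains is needed — one simply observes that summing over $j\in\{1,\dots,n-1\}$ is the same as summing over $j\in\{1,\dots,n-2\}$ (the $j=n-1$ term being $0$) and that within each $u_j$ the same vanishing already removed the adjacent-index chains. This reduces the whole argument to a finite, transparent reindexing, and no genuinely hard estimate or new idea is required beyond the identity $\sum_{l(\lambda)=j} t_\lambda = S(n,j)$ and the vanishing $f_{m,m-1}=0$.
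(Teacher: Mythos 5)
Your proposal is correct and takes essentially the same route as the paper: one regroups the sum of Theorem \ref{formulamax} by the length $j$ of the partition, so that the Stirling number $S(n,j)$ appears and one gets the recursion $P(Y_{max,n})(q)=1+\sum_{j}f_{n,j}(q)P(Y_{max,j})(q)$, and then concludes by induction on $n$, the gap conditions defining $J_k(n-2)$ being enforced automatically by the vanishing of $f_{m,m-1}$. The paper states this induction without detail; your chain-reindexing argument simply makes that step explicit.
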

\begin{proof}
The proof is by  induction on \(n\).

 One first   observes that, in the sum which appears in the formula of Theorem \ref{formulamax}, we can regroup all the partitions with the same length \(j\), with \(j=1,...,n-2\): 
 \[P(Y_{max,n})(q)=1+\sum_{
j=1,....,n-2
}\frac{q^{n-j}-q}{q-1} \; S(n,j) P(Y_{max,j})(q)\]

The conclusion then  follows by induction.  
\end{proof}

\begin{rem}
We notice that we can use our formula  for the Poincar\'e polynomials of the maximal models  to obtain formulas  for \(P(Y_{\G_{(2,2,1,1,....)}''})(q)\), where \(\G_{(2,2,1,1,....)}''\) is any one of the building sets obtained as a result of the above described algorithm,  which starts from \(\G_{(2,2,1,1,....)}\) and deletes at each   step a minimal subspace. In fact, at each  step of the algorithm we have a relation like the following one:
\[   P(Y_{max,n})(q)= P(Y_{\G_{(2,2,1,1,....)}''})(q)+ \hbox{sums of polynomials} \  \frac{q^{n-j}-q}{q-1} P(Y_{max,j})(q)\]

\end{rem}

In principle it is possible to use arguments  similar to the one used in the proof of  Theorem \ref{formulamax}  to  find formulas for the Poincar\'e polynomials of the models \(Y_{\G_\lambda}\) (\(\lambda \in B\Lambda_n\)), 
but  when we quotient by a subspace it is not  always  true that the quotient building set is one of the invariant   ones described in the preceding sections, so  the computation may need further steps and may become  more complicated. 

\section{Regular   building sets}
\label{secinductivefamilies}

The following building sets appear as natural objects in our picture, since they are obtained as unions of the building sets \(\G_\lambda\) which lie on a  same row of the diagram of \(B\Lambda_n\) (see Figure \ref{famigliabuilding3}). 

\begin{defin}
For every \(n\geq 3\) and \(1\leq s\leq n-2\) we denote by \(\G_s(A_{n-1})\) the \(S_n\) invariant building set  which contains \({\mathcal F}_{A_{n-1}}\) and also all the subspaces of the maximal building set which have dimension \(\geq n-s\). We will call \(\G_s(A_{n-1})\) the {\em regular building set of degree \(s\)}.
\end{defin}
We notice that, for \(n\geq 3\), \(\G_1(A_{n-1})= {\mathcal F}_{A_{n-1}}=\G_{(n)}\) and, for \(n\geq 4\),  \(\G_{n-2}(A_{n-1})\) is equal to the maximal building set \(\G_{(2,2,1,...,1)}\).
Therefore, for every  \(n\geq 3\) we have pointed out \(n-2\) distinct regular building sets which include the irreducibles:
\[\G_{1}(A_{n-1})\subset  \G_{2}(A_{n-1}) \subset \cdots  \subset \G_{n-2}(A_{n-1}).\]  

The following definition points out the property needed to apply the argument of the proof of Theorem \ref{formulamax} to more general building sets. 

\begin{defin} 
\label{defgeneraleinductive}
Let us consider, for every \(n\geq 1\),  a   building set  \(\G(n)\) (associated to  a subspace  arrangement in \(\C^n\)). We will call the family \(\{\G(n)\}\) {\em inductive} if, when we take any subspace  \(G \in \G(n)\) of dimension \(j\) (with \(n-1\geq j  > 2\)), the building set \({\overline \G(n)}=\{ (A+G)/G\: : \: A\in \G''\}\) is (isomorphic to) \(\G(n-j)\), for every  \(\G''\)  obtained from \(\G\) by removing \(G\),  all the subspaces of dimension \(<j\) and any collection of subspaces of dimension \(j\).
\end{defin}
 A first remark is that   the family of maximal building sets  is   inductive.
 
 We observe that  \(\G_{n-3}(A_{n-1})\) 
 is an inductive family, while \(\G_{n-4}(A_{n-1})\) is inductive ``up to subspaces of dimension 1'', that is to say, the quotient building sets may differ from the expected ones, but   only in the subspaces of dimension 1. For \(s<n-4\)  the family \(\G_s(A_{n-1})\)  is not inductive, so the argument of the proof of Theorem \ref{formulamax}  cannot be applied. 
Anyway we will manage to compute the Poincar\'e polynomials of the associated models.  For this it is  useful to introduce  some different families of building sets, which are inductive:
\begin{defin}
For every \(n\geq 3\) and \(1\leq s\leq n-1\) we denote by \({\widetilde \G}_s(A_{n-1})\) the (\(S_n\) invariant)  building set  which contains all the subspaces of the maximal building set which have dimension \(\geq n-s\).
\end{defin}

\begin{rem}
We notice that \({\widetilde \G}_{n-1}(A_{n-1})=\G_{n-2}(A_{n-1})\) and that  all the other building sets \({\widetilde \G}_{s}(A_{n-1})\)  (when \(1\leq s \leq n-2\)) do not include hyperplanes.
All the families  \({\widetilde \G}_s(A_{n-1})\) are inductive.
\end{rem}

 For convenience of notation,     for every \(n\geq 2\) and \(s\geq  n-1\) we put  \(\G_s(A_{n-1})\) and \({\widetilde \G}_{s+1}(A_{n-1})\)  to be equal to the maximal building set.
 Let us denote by \(f_{n,j}(q)\), as in Section \ref{secformulamax},     the polynomial \(S(n,j)\frac{q^{n-j}-q}{q-1} \; \) (where \(S(n,j)\) are the Stirling numbers of the second kind).

 \begin{teo}
 \label{formulafamigliainduttivatilda}
For every \(n\geq 3\) and  \(1\leq s\leq n-2\)  we have the following  formula for the Poincar\'e polynomial of the models \(Y_{{\widetilde \G}_{s}(A_{n-1})}\):
 \[P(Y_{{\widetilde \G}_{s}(A_{n-1})})(q)=1+\sum_{
\begin{array}{c}
1\leq k \leq \lfloor \frac{s+1}{2} \rfloor\\
( j_1, j_2, ..., j_k)\in J_k(s) 
\end{array}
}f_{j_2,j_1}(q)f_{j_3,j_2}(q)\cdots f_{j_k,j_{k-1}}(q)f_{n,j_k}(q)\]
 \end{teo}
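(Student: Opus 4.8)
The plan is to imitate closely the proof of Theorem \ref{formulamax}, exploiting the fact that the families \({\widetilde \G}_s(A_{n-1})\) are inductive in the sense of Definition \ref{defgeneraleinductive}. First I would set up the recursion: take \(\G={\widetilde \G}_s(A_{n-1})\) and apply Proposition \ref{propindu} repeatedly, removing at each step a minimal (with respect to inclusion) subspace, starting from the subspaces of dimension \(n-s\), then \(n-s+1\), and so on up to dimension \(n-1\) (note that \({\widetilde \G}_s(A_{n-1})\) for \(s\le n-2\) contains no hyperplanes, so the smallest subspaces have dimension \(n-s\ge 2\), which is exactly the range in which the inductive property gives a clean quotient). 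When we delete a minimal subspace \(A\) of dimension \(j=n-l(\lambda)\) and form \(Y_{\oG}\), the inductive property tells us \(\oG\cong {\widetilde \G}_{?}(A_{l(\lambda)-1})\); I would check that the parameter works out so that the quotient is the \emph{maximal} building set on \(l(\lambda)\) variables — indeed, as in the proof of Theorem \ref{formulamax}, every subspace \(B+A/A\) with \(\dim B>\dim A\) and \(B\) still present in the current \(\G''\) survives, and these realize all subspaces of the maximal building set of type \(A_{l(\lambda)-1}\). Hence \(P(Y_{\oG})(q)=P(Y_{max,l(\lambda)})(q)\).

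Running the recursion to the end, exactly as in Theorem \ref{formulamax}, I would obtain an inductive formula of the shape
\[P(Y_{{\widetilde \G}_{s}(A_{n-1})})(q)=1+\sum_{j=1}^{s}\frac{q^{n-j}-q}{q-1}\,S(n,j)\,P(Y_{max,j})(q),\]
the only difference with the maximal case being that the length \(l(\lambda)=j\) now ranges over \(1\le j\le s\) instead of \(1\le j\le n-2\) (because we only remove subspaces of dimension \(\ge n-s\), equivalently of colength \(\le s\)). Here one must be a little careful: regrouping all subspaces of a given length \(j\) contributes the factor \(S(n,j)\) exactly as in the proof of Theorem \ref{ulterioremiglioramentoformulamax}, since the number of ways to partition \(\{1,\dots,n\}\) into \(j\) nonempty blocks is \(S(n,j)\) and each such partition gives a subspace of the maximal building set of colength \(\le s\) lying in \({\widetilde \G}_s(A_{n-1})\). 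So the base recursion in terms of \(f_{n,j}\) reads \(P(Y_{{\widetilde \G}_{s}(A_{n-1})})(q)=1+\sum_{j=1}^{s}f_{n,j}(q)\,P(Y_{max,j})(q)\).

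Finally I would substitute the closed formula for \(P(Y_{max,j})(q)\) from Theorem \ref{ulterioremiglioramentoformulamax} and expand, proceeding by induction on \(n\) (or on \(s\)). Writing \(P(Y_{max,j})(q)=1+\sum_{(j_1,\dots,j_{k-1})\in J_{k-1}(j-2)}f_{j_2,j_1}\cdots f_{j,j_{k-1}}\), multiplying by \(f_{n,j}(q)\) and summing over \(j\le s\) turns the index set into chains \(1\le j_1<\cdots<j_{k-1}<j_k:=j\le s\) with consecutive gaps \(\ge 2\), i.e. \((j_1,\dots,j_k)\in J_k(s)\); the constant term \(1\) inside each \(P(Y_{max,j})\) produces the \(k=1\) terms \(f_{n,j}(q)\), and the overall \(1\) is the \(k=0\) summand. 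This is precisely the claimed formula, with the upper bound \(k\le\lfloor\frac{s+1}{2}\rfloor\) coming from the gap-\(\ge2\) constraint on a strictly increasing chain inside \(\{1,\dots,s\}\). The main obstacle, and the point I would treat most carefully, is verifying that throughout the whole sequence of blow-downs the relevant quotient building sets \(\oG\) are genuinely the maximal ones on fewer variables — i.e. that the inductive property of \({\widetilde \G}_s(A_{n-1})\) is applied only to subspaces of dimension \(j>2\) as required by Definition \ref{defgeneraleinductive}, and that when \(j=2\) (the minimal dimension, occurring when \(s=n-2\)) the contribution is just \(S(n,n-2)\cdot 1\), which is consistent since \(Y_{max,2}\) is a point; the bookkeeping of which subspaces of dimension exactly \(j\) have already been removed is what makes the "\(P(Y_{\oG})=P(Y_{max,l(\lambda)})\)" claim work, and this is where the hypotheses of Definition \ref{defgeneraleinductive} are used in full.
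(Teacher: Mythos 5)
Your proposal is correct and takes essentially the same route as the paper, whose proof simply says to repeat the arguments of Theorems \ref{formulamax} and \ref{ulterioremiglioramentoformulamax} with the partition lengths restricted to be \(\leq s\); your intermediate recursion \(P(Y_{{\widetilde \G}_{s}(A_{n-1})})(q)=1+\sum_{j=1}^{s}f_{n,j}(q)P(Y_{max,j})(q)\) and its expansion into chains with gaps \(\geq 2\) are exactly the intended argument. The only slip is your final parenthetical: removing a minimal subspace of dimension \(2\) (which occurs only when \(s=n-2\)) gives a quotient isomorphic to the maximal building set on \(n-2\) variables, so its contribution is \(f_{n,n-2}(q)P(Y_{max,n-2})(q)\) rather than \(S(n,n-2)\cdot 1\) (you conflated the dimension of the removed subspace with the length of its partition); since that case is handled by the very same argument as in the proof of Theorem \ref{formulamax}, the slip does not affect the validity of the proof.
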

\begin{proof}
One repeats the steps of  the proofs of Theorems \ref{formulamax} and \ref{ulterioremiglioramentoformulamax}, paying attention to the fact that the length of the  partitions which appear is \(\leq  s\).
\end{proof}

\begin{rem}
Since  \({\widetilde \G}_{n-2}(A_{n-1})\) and \(\G_{n-2}(A_{n-1})\) differ only in the subspaces of dimension 1, this formula includes as a particular case (\(s=n-2\)) the  formula for the maximal models (see Theorem \ref{ulterioremiglioramentoformulamax}).
\end{rem}

Now we are ready to  describe formulas for the Poincar\'e polynomials of the models \(Y_{\G_{s}(A_{n-1})}\): these turn out to be interpolations between the well known formula for minimal models and the formula for maximal models of Theorem \ref{ulterioremiglioramentoformulamax}.
In these interpolations the  polynomials  \(P(Y_{{\widetilde \G}_{s}(A_{n-1})})(q)\)  play a role.

In  \cite{YuzBasi} 
the Poincar\'e series \(\Phi(q,t)=t+ \sum_{n\geq 2, i} dim \ H^{2i}(Y_{\G_{1}(A_{n-1})}, \Q) q^i \frac{t^n}{n!}\) for the minimal models has been computed in the following way. \footnote{Since the projective  minimal models are isomorphic to the moduli space \({\overline M}_{0,n+1}\), this series also appear in many papers, computed  from the moduli point of view: see for instance   \cite{getzler}, \cite{manin}.}
First  one computes, via a recursive relation,  the  series \(\lambda(q,t)\) which counts the contribution of basis monomials whose associated nested set is represented by a tree (included the degenerate tree given by a single leaf, which gives contribution \(t\)):
\begin{displaymath}
	\lambda(q,t) ^{(1)}=1 + \frac{\lambda(q,t) ^{(1)}}{q-1}
	\left[e^{q\lambda(q,t) }-qe^{\lambda(q,t) }+q-1    \right]
\end{displaymath}
(here the superscript \(^{(1)}\) means the first derivative with respect to \(t\)).

Then one obtains  \(\Phi(q,t)\) as  \(e^{\lambda(q,t) }-1\).

Now we need a modification  \(\Phi(q,t,y)\) of  \(\Phi(q,t)\),  where the powers of the variable \(y\)  take into account  the number of the maximal subspaces  in the nested sets associated to basis elements:
\[ \Phi(q,t,y)= e^{y\lambda(q,t) }-1\]

 \begin{teo}
 \label{formulafamigliainduttiva}
For every \(n\geq 3\) and  \(1\leq s\leq n-2\)  we have the following  formula for the Poincar\'e polynomial of the models \(Y_{\G_{s}(A_{n-1})}\):
 \[\frac{t^n}{n!}P(Y_{ \G_{s}(A_{n-1})})(q)=\frac{t^n}{n!}P(Y_{ {\widetilde G}_{s}(A_{n-1})})(q)+\]\[+\sum_{
\begin{array}{c}
s<j\leq n-2
\end{array}
}\Phi_{|{\small \begin{array}{c}
deg\ y=j\\
 deg\ t =n
 \end{array}}}(q,t,1)P(Y_{{\widetilde \G}_{s}(A_{j-1})})(q) \]
 \end{teo}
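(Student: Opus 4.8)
The plan is to separate the counting of admissible basis monomials for $Y_{\G_s(A_{n-1})}$ according to the type of the maximal element of the associated nested set. Recall that $\G_s(A_{n-1})=\F_{A_{n-1}}\cup\widetilde\G_s(A_{n-1})$, and that the only difference between $\G_s(A_{n-1})$ and $\widetilde\G_s(A_{n-1})$ is that the former contains all the hyperplanes (subspaces of dimension $1$), which the latter does not (for $s\le n-2$). A $\G_s(A_{n-1})$-nested set $\Sc$ either has a maximal element $M$ of dimension $\le s$ — in which case $M$ already belongs to $\widetilde\G_s(A_{n-1})$, and in fact the whole nested set, together with the admissibility conditions, is a $\widetilde\G_s(A_{n-1})$-nested configuration — or it has a maximal element $M$ of dimension $j$ with $s<j\le n-1$; but such an $M$ of dimension $j>s$ can only belong to $\G_s(A_{n-1})$ if it is a hyperplane (dimension $1$), which forces $j\le s$ unless $M$ is irreducible of the form $(n)$ (the whole space's annihilator), contradiction — so actually the maximal element of dimension $>s$ must be an element of $\F_{A_{n-1}}$, i.e.\ of the form $(\text{one part} >1,\ \text{rest }=1)$, which has dimension $n - (\text{size of the big part}) + \dots$; the point is that its ``free variables'' number, i.e.\ the length of its associated partition, is some $j$ with $s<j\le n-2$.

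Concretely, here is the decomposition I would carry out. Fix the maximal element $M$ of a $\G_s(A_{n-1})$-nested set $\Sc$; it is irreducible (being maximal in a nested set over a building set, plus the fact that the non-irreducible elements of $\G_s$ all have dimension $\le s$, hence length $\le s$... wait, the other direction), so $M\in\F_{A_{n-1}}$ and corresponds to a subset of $\{1,\dots,n\}$; contracting gives $l(\lambda)$ free variables where $\lambda$ is the form of $M$. The elements of $\Sc$ strictly below $M$ split into those living ``inside'' $M$ and those living in the quotient direction. When $l(\lambda)=j>s$, the induced structure below $M$ in the quotient is a $\widetilde\G_s(A_{j-1})$-nested configuration — here one must use that $\G_s(A_{n-1})$ restricted to the subspaces containing $M$ (modulo $M$) gives exactly $\widetilde\G_s(A_{j-1})$, which follows because modding out $M$ kills the hyperplanes (every hyperplane meets $M$ in something of positive codimension), leaving precisely the dimension-$\ge j-s$ subspaces of the maximal building set of type $A_{j-1}$. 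The part of $\Sc$ inside and at $M$ contributes, after summing over all admissible monomials and all choices of which $j$-subset $M$ is, a factor whose exponential generating function in $t$ is exactly $\Phi(q,t,y)$ with $\deg y=j$ recording that $M$ is a single maximal block with $j$ free variables — because $\Phi(q,t,y)=e^{y\lambda(q,t)}-1$ and the $y$-degree counts the number of maximal subspaces, and the subtree hanging below a maximal irreducible element of type $A_{j-1}$ is governed by $\lambda(q,t)$ by the Yuzvinsky recursion (this is exactly the minimal-model combinatorics because below an irreducible element of $\F_{A_{n-1}}$ the nested set is $\F$-nested, as no non-irreducible subspace can sit strictly between a smaller subspace and $M$ while staying nested). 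Summing the $l(\lambda)=j\le s$ terms reproduces $\frac{t^n}{n!}P(Y_{\widetilde\G_s(A_{n-1})})(q)$, and the $j>s$ terms give the asserted sum.

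The main obstacle I expect is the bookkeeping in the claim that the contribution of ``everything at or below the maximal element $M$ of length $j$, summed over all choices of $M$ and all admissible monomials supported there'' is precisely $\Phi_{|\deg y=j,\ \deg t=n}(q,t,1)\cdot P(Y_{\widetilde\G_s(A_{j-1})})(q)$. This requires (i) verifying that the admissibility condition $f(A)<d_{\supp(f)_A,A}$ factorizes cleanly between the ``inside $M$'' part and the ``quotient'' part — this is standard once one knows the nested set splits as a disjoint union along $M$, but needs the explicit product structure of $\G_s$-nested sets with a given top element; (ii) matching the ``inside $M$'' generating function with $\Phi(q,t,y)$ restricted to $y$-degree $j$, which amounts to re-deriving, via Proposition~\ref{propindu} or directly from Theorem~\ref{base coomologia}, that the subdiagram below an irreducible top element behaves like a minimal model — this is where the Yuzvinsky series $\lambda(q,t)$ and its exponential enter, and one must be careful that ``$\deg y = j$'' in $e^{y\lambda(q,t)}$ picks out exactly the configurations with $j$ leaves under $M$ after contraction; and (iii) confirming that passing to the quotient by $M$ turns $\G_s(A_{n-1})$ into $\widetilde\G_s(A_{j-1})$ and not into $\G_s(A_{j-1})$ — the loss of hyperplanes in the quotient is the whole reason the two families both appear in the formula, and it is the one genuinely delicate point. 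Once these three points are in place the formula follows by collecting terms according to $j=l(\lambda)$ and invoking Theorem~\ref{formulafamigliainduttivatilda} for the $j\le s$ block; the rest is routine manipulation of exponential generating functions.
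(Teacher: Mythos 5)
Your target decomposition (a ``small irreducible'' part counted by \(\Phi\) and a ``large'' part counted by \(P(Y_{\widetilde\G_s(A_{j-1})})\)) is the right one, but the structural claims you build it on are wrong, and they fail exactly on the configurations the cross terms must count. A \(\G_s(A_{n-1})\)-nested set need not have a unique maximal element: if it contains no element of \(\widetilde\G_s(A_{n-1})\) it is just an \(\F_{A_{n-1}}\)-nested forest, possibly with several maximal blocks (e.g.\ \(\{\{1,2,3\},\{4,5,6\}\}\) for \(n=7\), \(s=1\)); and when there is a unique top element it is typically \emph{not} irreducible (any element of \(\widetilde\G_s\) of the form \((a,b,1,\dots)\), \(a,b\ge 2\), can sit on top), so ``fix the maximal element \(M\); it is irreducible'' is untenable. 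Your first alternative --- if the top element lies in \(\widetilde\G_s(A_{n-1})\) then the whole nested set is a \(\widetilde\G_s\)-configuration --- is also false: below a top element of \(\widetilde\G_s\) one can have irreducibles of small dimension not in \(\widetilde\G_s\) (e.g.\ \(\{1,2,3\}\subset\{1,2,3\}\{4,5,6\}\) for \(n=6\), \(s=2\)), and these mixed nested sets are precisely what produces the products \(\Phi_{|\deg y=j}\cdot P(Y_{\widetilde\G_s(A_{j-1})})\). (There is also a recurring dimension/length mix-up: elements of \(\widetilde\G_s\) have dimension \(\ge n-s\), i.e.\ associated partitions of length \(\le s\); ``dimension \(\le s\)'' is not the condition.) The correct decomposition, which is the paper's, is not by the top element but by splitting each nested set \(S\) as \(S_1\sqcup S_2\) with \(S_1=S\cap\widetilde\G_s(A_{n-1})\) (automatically a chain whose members all contain the sum \(B\) of the maximal elements of \(S_2\)) and \(S_2=S\cap(\F_{A_{n-1}}\setminus\widetilde\G_s(A_{n-1}))\) (a forest with maximal elements \(A_1,\dots,A_t\)), and then grouping by the number \(j\) of blocks formed by \(A_1,\dots,A_t\) together with the leftover singletons; \(j>s\) is forced because \(B\notin\G_s\), and \(\deg y=j\) in \(\Phi\) is the total number of blocks (trees plus singleton leaves), not the number of free variables left after contracting one block --- the two coincide only when \(t=1\).

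The second genuine gap is your point (iii). You propose to get \(\widetilde\G_s(A_{j-1})\) (rather than \(\G_s(A_{j-1})\)) by computing the quotient building set of \(\G_s(A_{n-1})\) by the irreducible \(M\) and arguing that ``modding out \(M\) kills the hyperplanes''. That computation is false: irreducibles strictly containing \(M\) survive in the quotient and give irreducibles of \(A_{j-1}\) of every dimension (for instance \(M\cup\{x\}\) descends to a line), so the quotient is not \(\widetilde\G_s(A_{j-1})\); and no blow-up or quotient-building-set argument is needed here. What actually forces \(\widetilde\G_s(A_{j-1})\) is nested-set combinatorics: any element of the nested set lying above the blocks must already belong to \(\widetilde\G_s\) (an irreducible strictly containing a maximal element of \(S_2\) would contradict that maximality), it must contain every block (incomparability with a block would produce a sum of dimension \(\ge n-s\), hence an element of \(\G_s\), violating nestedness), and collapsing the blocks sends a subspace with \(\ell\le s\) parts to one of dimension \(j-\ell\ge j-s\), i.e.\ exactly an element of \(\widetilde\G_s(A_{j-1})\), with admissible exponents unchanged. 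With these two points repaired, the bookkeeping you describe (factorization of admissibility between the two parts, matching the lower part with \(\Phi_{|\deg y=j,\ \deg t=n}(q,t,1)\), and summing over \(s<j\le n-2\)) does go through and reproduces the paper's argument.
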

\begin{proof}
Our  first step consists in describing    the \(\G_{s}(A_{n-1})\)-nested sets, since they are the supports of the monomials of the Yuzvinski bases  (see  Section \ref{subsecwonderful}). 
One observes that, if 
\(S\) is a \(\G_{s}(A_{n-1})\)-nested set, 
then \(S\) can be partitioned into two subsets:\\
a) the (possibly empty) subset \(S_1\)  made by the subspaces  which  belong to \({\widetilde \G}_{s}(A_{n-1})\). If  \(S_1\)  is not empty,  it contains a minimal element \(A\)  (with respect to inclusion). Then the elements of \(S_1\) are totally orderd by inclusion (\(A\) is the minimal one).\\ 
b) the (possibly empty) subset \(S_2\) made by the subspaces  which  belong to \({\mathcal F}_{A_{n-1}}-{\widetilde \G}_{s}(A_{n-1})\). They  satisfy the following properties: 
they form a \({\mathcal F}_{A_{n-1}}\)-nested set;  their sum \(B\) doesn't belong to  \({\widetilde \G}_{s}(A_{n-1})\) and, 
if \(S_1\) is not empty and \(A\) is the minimal subspace in \(S_1\),   \(B\) is strictly included into \(A\).

Therefore, every monomial \(m\)  in the basis is the product of a monomial \(m_{S_1}\) with support in \(S_1\) and  a monomial  \(m_{S_2}\) with support in \(S_2\). 
We notice that \(m_{S_2}\) also belongs to the cohomology basis of \(Y_{ \F_{A_{n-1}}}\).

Let us denote by \(A_1,A_2,..., A_t\) the maximal elements in \(S_2\).   We can represent them by subsets of \(\{1,2,...,n\}\) as usual;  considering the cardinalities of these subsets, and  eventually adding some parts equal to 1,  we can associate to \(A_1,A_2,..., A_t\)  a partition \(\lambda \in \Lambda_n\). 

 One can then compute Poincar\'e polynomials by regrouping all the basis monomials such that the maximal elements  in \(S_2\) give a partition of length \(j\), with the two following restrictions on \(j\):  \(j\leq n- 2\)  (all the subspaces  \(A_1,A_2,..., A_t\) have dimension \(\geq 2\) otherwise they are not in the support of a basis element) and \(j>s\) (otherwise \(A_1+A_2+\cdots +  A_t\) belongs to  \(\G_{s}(A_{n-1})\)).

The contribution of all the ``\(m_{S_2}\) factors''  such that the maximal elements in \(S_2\)  give a partition of length \(j\) is provided (up to multiplication by \(\frac{t^n}{n!}\)) by: 
\[\Phi_{|{\small \begin{array}{c}
deg\ y=j\\
 deg\ t =n
 \end{array}}}(q,t,1)\]
Now we observe that, by our description of nested sets, once such a  factor \(m_{S_2}\) is fixed,  all the  factors of type \(m_{S_1}\) are in bijective correspondence with the monomials of  the  cohomology basis of \(Y_{{\widetilde \G}_{s}(A_{j-1})} \).  The following example illustrates this correspondence: let \(n=12\) and let \(\{1,2,3\} \) and \(\{4,5,6,7\}\) be the maximal subspaces  in \(S_2\). Then every subspace in \(S_1\) contains \(\{1,2,3\} \) and \(\{4,5,6,7\}\), therefore we can represent it as a partition of \(\{1,2,...,12\}\) where 1,2,3 belong to the same part, and 4,5,6,7 belong to the same part. Now, ``collapsing'' 1,2,3 to a new symbol \(\overline {1}\) and 4,5,6,7 to \(\overline {4}\), we are representing every subspace in \(S_1\) by  a partition of  \(\{\overline {1}, \overline {4},8,9,10,11,12\}\), or renumbering the elements, by  a partition of \(\{1,2,3,4,5,6,7\}\).  In this way we associate to the monomial  \(m_{S_1}\) a monomial in the  cohomology basis of \(Y_{{\widetilde \G}_{s}(A_{6})} \) (in this correspondence the   exponents do not change, according to the  definition of admissible monomials in Section \ref {subsecwonderful}).

 \end{proof}


\begin{es}
Here there are some  examples:
\[P(Y_{\G_{1}(A_{4})})(q)=q^3+16q^2+16q+1\]
\[P(Y_{\G_{2}(A_{4})})(q)=q^3+26q^2+26q+1\]
\[P(Y_{\G_{3}(A_{4})})(q)=q^3+41q^2+41q+1\]
\[P(Y_{\G_{1}(A_{5})})(q)=q^4+42q^3+127q^2+42q+1\]
\[P(Y_{\G_{2}(A_{5})})(q)=q^4+67q^3+222q^2+67q+1\]
\[P(Y_{\G_{3}(A_{5})})(q)=q^4+142q^3+372q^2+142q+1\]
\[P(Y_{\G_{4}(A_{5})})(q)=q^4+187q^3+732q^2+187q+1\]
\[ P(Y_{\G_{5}(A_{6})})(q)=q^5  + 855q^4  + 9556q^3  + 9556q^2  + 855q + 1\]
\end{es}

\begin{rem}[There is not an extended action on non minimal models]
As it is well known, the minimal model of type \(A_{n-1}\) has a natural `extended' \(S_{n+1}\) action, which comes from the moduli interpretation (see  for instance \cite{DCP1}, \cite{Gaiffitesi}, \cite{Gaiffidantonio}).
This is not true for the other \(S_n\) invariant models, as one can see from the geometrical point of view since the \(S_n\) action induced on the irreducible divisors in the boundary does not extend to a \(S_{n+1}\) action.  

From the algebraic point of view,  for instance, a comparison between the character \(\chi^2_4 \)  of the \(S_4\) action  on \(Y_{\G_{2}(A_{3})}\) and  the character \(\chi^1_4 \)  of the \(S_4\) action  on \(Y_{\G_{1}(A_{3})}\) shows that on the cohomology of the maximal model there is  not an  extended action compatible with the extended action on the cohomology of the minimal model. In fact  \(\chi^2_4-\chi^1_4=(s_{(4)}+s_{(2,2)})q\) and there is not a representation of \(S_5\) which, once restricted, decomposes as  \(s_{(4)}+s_{(2,2)}\).

\end{rem}

\section{Case \(B_n\) ( and \(C_n\)), classification of  all the invariant building sets}
\label{buildingbn}
Let us consider the root arrangement of type \(B_n\) in \(\C^n\) and let \(W(B_n)\) be  its Weyl group (the case \(C_n\) leads to the same arrangement). The subspaces in the building set of irreducibles $\F_{B_n}$  are of two  types,  strong subspaces and  weak subspaces. A  subspace of $(\C^n)^*$  is strong if its  annihilator  can be described by  the equation $x_{i_1}=\cdots
=x_{i_k}=0$. Then strong subspaces can  be put in bijective correspondence with the subsets of $\{1,\cdots,n\}$ of cardinality greater than
or equal to $1$ (we will call  such subsets strong when they represent a strong subspace). A subspace in $\F_{B_n}$ is weak  if its  annihilator can be described by   $\{x_{i_1}=\cdots
=x_{i_r}=-x_{j_1}=\cdots =-x_{j_s}\}$ (\(r+s\geq 2\)); therefore weak elements   are  in  bijective correspondence with the subsets of $\{1,\cdots,n\}$ of cardinality greater than or equal
to $2$ (such subsets will be called weak) equipped with a partition (possibly trivial)  into  two  parts. 

Therefore, the  subspaces in the maximal building set \({\mathcal C}_{B_n}\) can put in bijective correspondence with the partitions of $\{1,\cdots,n\}$ such that   each part is labelled  "weak" or "strong" and the  following  extra conditions are satisfied:  at most one part is strong, and if there is not a strong part, then at least one of the weak parts has more than 1 element.\footnote{In this notation we allow  the presence of  weak   singletons \(\{i\}\),  which are  associated to the  subspace \(\{0\}\).}

We want to classify all   the \(W(B_n)\) invariant building sets which include $\F_{B_n}$.  The combinatorial  description of the  subspaces in the maximal building set suggests us to introduce the notion of partition with a singular part:

\begin{defin} We denote by  \(S\Lambda_n\) the set of {\em singular  partitions}:   its  elements are  the couples \((r,\lambda)\)  with \(r\) integer, \(0\leq r \leq n\) and \(\lambda\in \Lambda_{n-r}\).  
\end{defin}
If  a subspace \(A\) in \({\mathcal C}_{B_n}\)  is represented by a partition of  $\{1,\cdots,n\}$, which has a strong part of cardinality \(r\) (\(r\) may  be 0) and weak parts whose cardinalities give the partition \(\lambda\in \Lambda_{n-r}\), we will say that  \(A\) has the form \((r,\lambda)\). 

The element   \((r,\lambda)\in S\Lambda_n\)  can be represented by a diagram whose higher row has \(r\) coloured boxes (see Figure \ref{tableaubn}).
  \begin{figure}[h]
 \center
\includegraphics[scale=0.25]{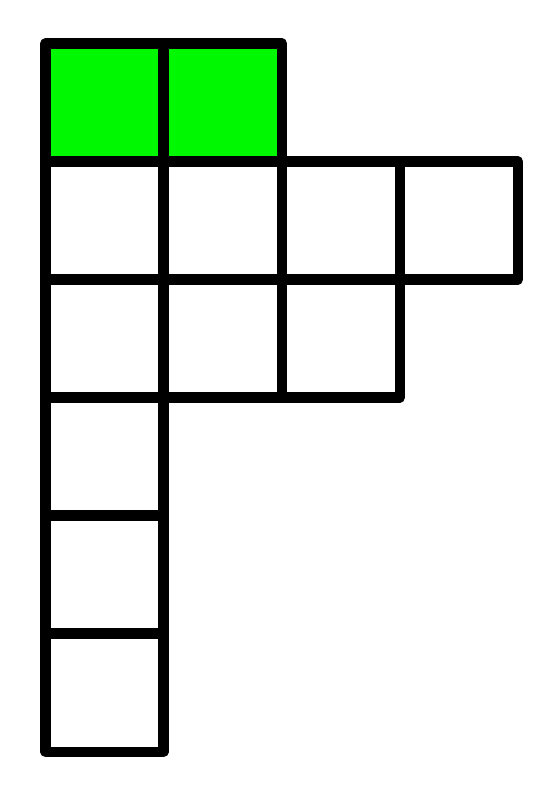}
\caption{The coloured diagram representing the singular (building) partition \((2,(4,3,1,1,1))\).}
\label{tableaubn}

\end{figure}

We  consider the following three  types of {\em admissible moves} on  \(S\Lambda_n\): \\
a) remove  an {\em entire} not coloured row and add all its boxes to a not coloured   row which has at least two  boxes or to   the coloured row (if it exists; if we are adding boxes to the coloured row, then the boxes will be coloured). At the end, if necessary,   the not coloured    rows will be rearranged in order to obtain a valid   diagram;\\
b)  remove \(k\geq 2\) not coloured rows made by a single box and form a row made by \(k\) boxes, if \(k\) is greater than or equal to the number of boxes of  the smallest  not coloured row with more than one box;  then, if necessary, rearrange the rows in order to obtain a valid coloured  diagram; \\
c) if the diagram is made by a single row, we can colour it.

As in the \(A_n\) case, we introduce a partial ordering in \(S\Lambda_n\): 
 \[(s,\gamma)\geq (r,\lambda) \]
 if and only if \((s,\gamma)\) can be obtained by \((r,\lambda)\) by a sequence of  admissible  moves.

\begin{defin}
A {\em singular building partition}  (of type \(B_n\), \(n\geq 2\)) is a couple \((r,\lambda)\in S\Lambda_n\)  which satisfies the further conditions that   \(\lambda\neq (1,1,1,...)\) and, if \(r=0\), then \(\lambda \in B\Lambda_n-\{(n)\}\).
We will denote by  \(SB\Lambda_n\)  the poset of all singular building partitions, with the ordering induced by \(\geq\).
\end{defin}


\begin{defin} Let us consider \((r,\lambda)\in SB\Lambda_n\).  We define the set \(\BG_{(r,\lambda)}\) as the union of  $\F_{B_n}$  with the set made by  all the subspaces of the form
 \((s,\gamma)\in SB\Lambda_n\), with   \((s,\gamma)\geq (r,\lambda)\).

\end{defin}

 We notice that, according to the definition, the building set of irreducibles is denoted by \(B\G_{(n,(0))}\). 
If \(n= 3\), there are two \(W(B_n)\) invariant  building sets which contain the irreducibles:  the minimal one \(B\G_{(3,(0))}\) and the maximal  one   \(B\G_{(1,(2))}\).  If \(n> 3\), the building sets \(B\G_{(s,\lambda)}\) (\((s,\lambda)\)  in  \(SB\Lambda_n\)) are all distinct and  the maximal building set  is described as \(B\G_{(0,(2,2,1,1,1,...))}\cup B\G_{(1,(2,1,1,1,1,...))}\).

\begin{prop}
Given \(n\geq 3\) and two different singular building partitions \((r,\mu)\) and  \((s,\lambda)\)  in  \(SB\Lambda_n\), the   building set \(B\G_{(s,\lambda)}\) is  included into \(B\G_{(r,\mu)}\) if and only if \((s,\lambda)>(r,\mu)\).
\end{prop}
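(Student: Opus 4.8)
The plan is to follow the (immediate) argument behind the analogous Proposition for type $A_{n-1}$, adjusting it to the two new features of the $B_n$ combinatorics: the coloured row and the move $c)$. Throughout I use that the relation $\geq$ on $S\Lambda_n$ is reflexive (empty sequence of moves) and transitive (sequences of moves concatenate), and that, by definition, $B\G_{(r,\mu)}=\F_{B_n}\cup\{A:\ A \text{ has a form } (t,\gamma)\in SB\Lambda_n \text{ with } (t,\gamma)\geq(r,\mu)\}$. The implication $(s,\lambda)>(r,\mu)\Rightarrow B\G_{(s,\lambda)}\subseteq B\G_{(r,\mu)}$ is then checked elementwise: a subspace $B\in B\G_{(s,\lambda)}$ either lies in $\F_{B_n}$, and so in $B\G_{(r,\mu)}$, or it has a form $(t,\gamma)\in SB\Lambda_n$ with $(t,\gamma)\geq(s,\lambda)$; since $(s,\lambda)\geq(r,\mu)$, transitivity gives $(t,\gamma)\geq(r,\mu)$ with $(t,\gamma)$ still a singular building partition, hence $B\in B\G_{(r,\mu)}$.

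For the converse the crucial point is a purely combinatorial observation: among the elements of $SB\Lambda_n$, the only one that is the form of an irreducible subspace of $\Cc_{B_n}$ is $(n,(0))$. This follows from the description of $\F_{B_n}$ recalled above---a strong irreducible has form $(k,(1^{n-k}))$ with $1\leq k\leq n$, a weak irreducible has form $(0,(m,1^{n-m}))$ with $2\leq m\leq n$---together with the remark that each of these other than $(n,(0))$ violates a defining condition of a singular building partition (either its weak-part multiset is a nonempty all-$1$ partition, or $r=0$ while that partition has at most one part $\geq 2$). Granting this, assume $B\G_{(s,\lambda)}\subseteq B\G_{(r,\mu)}$ with $(s,\lambda)\neq(r,\mu)$, and suppose first $(s,\lambda)\neq(n,(0))$. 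Choose a subspace $A$ of form $(s,\lambda)$; such an $A$ exists because a labelled set partition realising $(s,\lambda)$ satisfies the conditions defining $\Cc_{B_n}$ (when $r=0$ the needed weak part of size $\geq 2$ is present, since $\lambda\in B\Lambda_n\setminus\{(n)\}$). Then $A\in B\G_{(s,\lambda)}\subseteq B\G_{(r,\mu)}$ while $A\notin\F_{B_n}$ by the observation, so the form of $A$---which is $(s,\lambda)$---is $\geq(r,\mu)$; hence $(s,\lambda)\geq(r,\mu)$, and therefore $(s,\lambda)>(r,\mu)$.

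The remaining case $(s,\lambda)=(n,(0))$ is separate, since then $B\G_{(s,\lambda)}=\F_{B_n}$ is trivially contained in $B\G_{(r,\mu)}$ and one must show directly that $(n,(0))>(r,\mu)$ for every $(r,\mu)\in SB\Lambda_n$ with $(r,\mu)\neq(n,(0))$; this amounts to saying that $(n,(0))$ is the maximum of $SB\Lambda_n$. Starting from the coloured diagram of $(r,\mu)$: if $r\geq 1$, I apply move $a)$ repeatedly, each time removing an entire not-coloured row and adding its boxes to the coloured row (always permitted), until a single coloured row of $n$ boxes is reached; if $r=0$, then $\mu$ has at least two parts $\geq 2$, so I first merge all rows into one not-coloured row of $n$ boxes using moves $a)$ (always adding into a row with $\geq 2$ boxes) and then apply move $c)$ to colour it. I expect the only real work---and the main potential obstacle---to be verifying the classification-of-forms observation and checking the admissibility of each of these moves; both reduce to unwinding the definitions.
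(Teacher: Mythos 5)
Your proof is correct, and it simply fills in the details of a statement the paper records without proof (the analogous type $A_{n-1}$ proposition is declared ``immediate from the definition''): the elementwise check via reflexivity and transitivity of $\geq$, the observation that $(n,(0))$ is the only element of $SB\Lambda_n$ realized as the form of an irreducible in $\F_{B_n}$, and the verification that $(n,(0))$ is the maximum of $SB\Lambda_n$ (handling $B\G_{(n,(0))}=\F_{B_n}$) are exactly the points one must unwind, and your admissibility checks for the moves used are all valid since the order on $SB\Lambda_n$ is induced from $S\Lambda_n$, so intermediate diagrams need not be building. No gaps.
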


 \begin{figure}[h]
 \center
\includegraphics[scale=0.25]{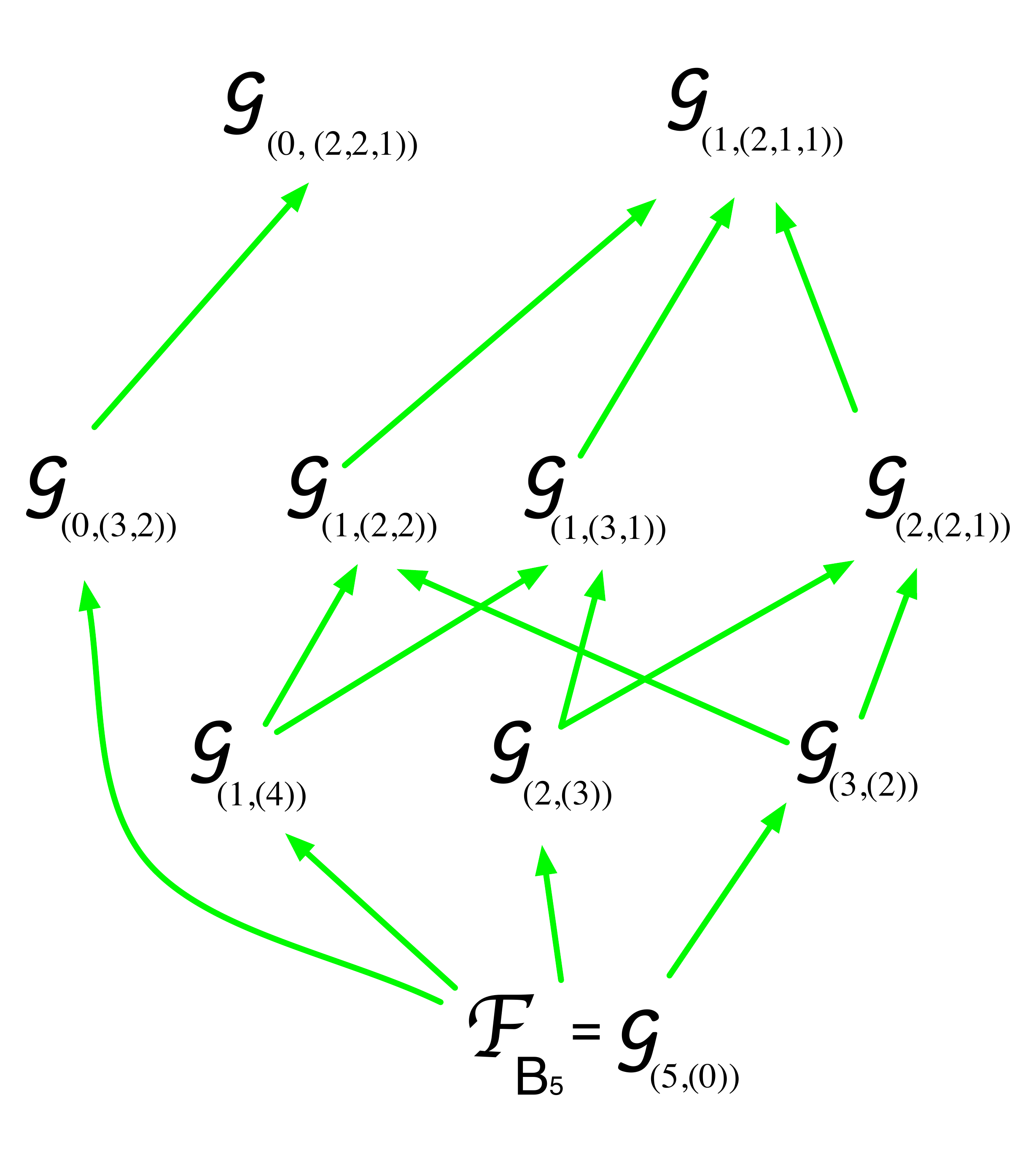}

\caption{The Hasse diagram of the family  of building sets of type \(\G_{(r,\mu)}\) (with \((r,\mu)\)  in  \(SB\Lambda_n\)) in the case \(B_5\).}
\label{famigliabuildingB5}
\end{figure}

\begin{defin}
Given a set \(\MS=\{(r_1,\lambda^1),(r_2,\lambda^2),...,(r_v,\lambda^v)\} \) of pairwise not comparable elements in \(SB\Lambda_n\), we denote by \(\G_{\MS}\) the building set 
\[\G_{\MS}=\G_{(r_1,\lambda^1)}\cup \G_{(r_2,\lambda^2)}\cup...\cup\G_{(r_v,\lambda^v)}\]
and by  \(B{\mathcal T}_n\) the set whose elements are the   nonempty sets of pairwise not comparable elements  in \(SB\Lambda_n\).
\end{defin}
We have the following classification theorem (we omit the proof, since it  is similar to the \(A_n\) case, Theorem \ref{teoclassificazione}).
 \begin{teo}
\label{teoclassificazionebn}
The map:
\[\MS=\{(r_1,\lambda^1),(r_2,\lambda^2),...,(r_v,\lambda^v)\} \rightarrow \G_{\MS}\]
is a bijection between \(B{\mathcal T}_n\)  and the set of the \(W(B_n)\) invariant building sets which contain the irreducibles.
\end{teo}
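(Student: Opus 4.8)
The statement asserts that the map $\MS \mapsto \G_{\MS}$ is a bijection between $B\mathcal T_n$ and the set of $W(B_n)$-invariant building sets containing $\F_{B_n}$. The proof is modeled on the one given for Theorem \ref{teoclassificazione} in the $A_{n-1}$ case, so the plan is to follow that template step by step, adjusting the combinatorial bookkeeping to the three types of admissible moves on $S\Lambda_n$ and to the strong/weak labelling of subspaces in $\mathcal C_{B_n}$. We first check well-definedness and that the image lands in the claimed set: each $\BG_{(r,\lambda)}$ is a $W(B_n)$-invariant building set containing $\F_{B_n}$ (this follows from the construction, as in the $A$ case), hence so is any finite union of them; and surjectivity plus injectivity are then the two substantive points.

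\emph{Surjectivity.} Let $\B$ be a $W(B_n)$-invariant building set with $\F_{B_n}\subseteq \B$; assume $\B\neq\F_{B_n}$ (otherwise $\MS=\{(n,(0))\}$ works). To each subspace in $\B-\F_{B_n}$ associate its form $(r,\lambda)\in S\Lambda_n$, and let $\MS=\{(r_1,\lambda^1),\dots,(r_v,\lambda^v)\}$ be the set of $\geq$-minimal such forms; these are singular building partitions because a subspace of $\mathcal C_{B_n}\setminus\F_{B_n}$ has a form with $\lambda\neq(1,1,\dots)$ and, when $r=0$, with $\lambda\in B\Lambda_n-\{(n)\}$. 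The core of the argument is: if $A\in\B$ has $\geq$-minimal form $(r,\lambda)$ and $(s,\gamma)$ is obtained from $(r,\lambda)$ by a single admissible move, then $\B$ contains a subspace of form $(s,\gamma)$, hence (by $W(B_n)$-invariance) all of them; iterating gives $\BG_{(r,\lambda)}\subseteq\B$, and ranging over the minimal forms gives $\G_{\MS}\subseteq\B$. For moves of type $a)$ and $b)$ one reuses the $A_n$ trick verbatim: exhibit two subspaces $C,D$ of form $(r,\lambda)$ (chosen by permuting, within $W(B_n)$, a few indices between the parts being merged — keeping the strong part fixed) whose sum $C+D$ has the merged form; since $\B$ is building, $C+D$ must be a direct sum of maximal elements of $\B$ below it, which forces $C+D\in\B$. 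For the new move of type $c)$ — colouring a single-row diagram, i.e.\ passing from a weak part $\{i_1,\dots,i_k\}$ with trivial partition to a strong part — one argues similarly: the weak subspace $x_{i_1}=\cdots=x_{i_k}$ (trivial bipartition) and its $W(B_n)$-conjugate under a sign change, say on $i_1$, giving $x_{i_1}=\cdots=x_{i_k}$ with $i_1$ in the opposite block, have sum equal to the strong subspace $x_{i_1}=\cdots=x_{i_k}=0$, so that strong subspace lies in $\B$. Conversely, every $D\in\B-\F_{B_n}$ has a form $(s,\gamma)$ which is either minimal, so $D\in\BG_{(s,\gamma)}$ by definition, or satisfies $(s,\gamma)>(r_i,\lambda^i)$ for some $i$, so $D\in\BG_{(r_i,\lambda^i)}$; hence $\B=\G_{\MS}$.

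\emph{Injectivity.} Suppose $\G_{\MS}=\G_{\MS'}$ with $\MS=\{(r_i,\lambda^i)\}$, $\MS'=\{(s_j,\theta^j)\}$, both antichains in $SB\Lambda_n$; assume the common set is $\neq\F_{B_n}$. If $(r_1,\lambda^1)$ is not $\geq$ any element of $\MS'$, then $\G_{\MS'}$ contains no subspace of form $(r_1,\lambda^1)$ (using Remark \ref{remconfronto}'s $B_n$-analogue: containment of a subspace of form $(s,\gamma)$ inside one of form $(r,\lambda)$ is governed by $(s,\gamma)\geq(r,\lambda)$), contradicting $\G_{\MS}=\G_{\MS'}$; so $(r_1,\lambda^1)\geq(s_j,\theta^j)$ for some $j$, and symmetrically $(s_j,\theta^j)\geq(r_i,\lambda^i)$ for some $i$, whence $(r_1,\lambda^1)\geq(r_i,\lambda^i)$ forces $i=1$ by the antichain property and $(r_1,\lambda^1)=(s_j,\theta^j)$. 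Removing this common element and inducting on $v$ finishes the argument.

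\emph{Main obstacle.} The routine part is the surjectivity reduction to single admissible moves; the one genuinely new ingredient compared to the $A_n$ case is handling the colouring move $c)$ and, more generally, making sure the $C,D$ produced for moves $a)$ and $b)$ can always be chosen \emph{compatibly with the strong/weak labelling and the at-most-one-strong-part constraint} — e.g.\ when a move merges an uncoloured row into the coloured row, one must verify that the required conjugates $C,D$ of form $(r,\lambda)$ exist within $\mathcal C_{B_n}$ and that $C+D$ still satisfies the $\mathcal C_{B_n}$ admissibility conditions (in particular is not forced to collapse to something with two strong parts). This is where I expect the bookkeeping to be slightly delicate, though not deep; everything else transfers mutatis mutandis from the proof of Theorem \ref{teoclassificazione}.
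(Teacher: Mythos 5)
Your proposal is correct and is essentially the proof the paper intends: the paper explicitly omits the argument for Theorem \ref{teoclassificazionebn}, stating it is analogous to Theorem \ref{teoclassificazione}, and your adaptation (minimal forms, the two-conjugate-subspaces trick forcing $C+D\in\B$ via the building condition, then the antichain/uniqueness argument) is exactly that transfer, with the correct extra observation that sign changes in $W(B_n)$ handle the moves involving the strong part.
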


\section{Regular   building sets in case \(B_n\).  }
\label{secinductivefamiliesbn}

As in the  \(A_n\) case,  we focus on the {\em regular} \(W(B_n)\) invariant building sets,  obtained as the union of all the building sets of type \(\G_{(r,\mu)}\) (with \((r,\mu)\)  in  \(SB\Lambda_n\)) which lie on a same row of the Hasse diagram (see Figure \ref{famigliabuildingB5}).

\begin{defin}
For every \(n\geq 2\) and \(0\leq s\leq n-2\) we denote by \(\G_s(B_n)\) the (\(W(B_n)\) invariant) building set  which contains the irreducibles  and also all the subspaces of the maximal building set which have dimension \(\geq n-s\). We will call  \(\G_s(B_n)\) the {\em regular building set  of type \(B_n\) and of degree \(s\)}.
\end{defin}

For every \(n\geq 2\) and \(s\geq  n-1\) we put  \(\G_s(B_n)\) to be equal to the maximal building set.
We notice that, for \(n\geq 3\), \(\G_0(B_n)\) is the building set of irreducibles  (denoted by \(B\G_{(n,(0))}\) in Section \ref{buildingbn}) and, for \(n\geq 4\),  \(\G_{n-2}(B_n)\) is equal to the maximal building set \(B\G_{(0,(2,2,1,1,1,...))}\cup B\G_{(1,(2,1,1,1,1,...))}\). 

As in the \(A_n\) case, we will  define families  of subspace arrangements which are obtained by  removing some of  the irreducible subspaces from \(\G_s(B_n)\).
\begin{defin}
For every \(n\geq 2\) and \(0\leq s\leq n-1\) we denote by \({\widetilde \G}_s(B_n)\) the (\(W(B_n)\) invariant) building set  which contains  all the subspaces of the maximal building set which have dimension \(\geq n-s\). Moreover,  for every \(n\geq 2\) and \(s>  n-1\) we put  \({\widetilde \G}_s(B_n)\) to be equal to the maximal building set.
\end{defin}
We remark that \({\widetilde  \G}_{n-1}(B_n)= \G_{n-2}(B_n)\)  is  the maximal building set and that, for every fixed \(s\geq 0\),  the  family \({\widetilde \G}_s(B_n)\) is inductive.

%


Given two positive integers \(n>j\),  let us denote by \(h_{n,j}(q)\) the polynomial 
\[h_{n,j}(q)= \left( \sum_{k=1}^{n+1-(j-1)}\binom{n}{k-1}S(n+1-k,j-1)2^{n+1-(j-1)-k}\right)\frac{q^{n+1-j}-q}{q-1}\]
\begin{teo}
For every \(n\geq 2\) and \(0\leq s\leq n-2\) we have 
\label{formulabntilde}
\[P(Y_{{\widetilde \G}_{s}(B_n)})(q)=1+\sum_{
\begin{array}{c}
1\leq k \leq \lfloor \frac{s+2}{2} \rfloor\\
(j_1,j_2,...,j_k)\in J_k(s+1)
\end{array}}h_{j_2,j_1}(q)h_{j_3,j_2}(q)\cdots h_{j_k,j_{k-1}}(q)h_{n,j_k}(q)\]
where \(J_k(s+1)\) is the set of all the lists \((j_1,j_2,...,j_k)\) of integers such that \(1\leq j_1<j_2<\cdots <j_k\leq s+1\) and, for every \(i=1,2,...,k\), \(j_i-j_{i-1}\geq 2\).

\end{teo}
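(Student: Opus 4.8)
The plan is to mimic, for the family $\{\widetilde{\G}_s(B_n)\}_n$, the inductive argument that yielded the $A_{n-1}$ formulas (Theorems \ref{formulamax}, \ref{ulterioremiglioramentoformulamax}, \ref{formulafamigliainduttivatilda}). The starting point is the remark, already recorded in the excerpt, that for each fixed $s$ the family $\{\widetilde{\G}_s(B_n)\}_n$ is inductive in the sense of Definition \ref{defgeneraleinductive}. Concretely, I would first apply Proposition \ref{propindu} repeatedly: pick a minimal subspace $G\in\widetilde{\G}_s(B_n)$, pass to $\G'$ and to the quotient family $\overline{\G}$, and iterate, removing subspaces in order of increasing dimension $2,3,\ldots$ exactly as in the proof of Theorem \ref{formulamax}. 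The key structural fact to verify is that when we quotient by a subspace $G$ of dimension $j$, the quotient building set $\overline{\G}$ is (isomorphic to) $\widetilde{\G}_s(A_{j-1})$ — a \emph{type $A$} building set, not a type $B$ one. This is because modding out by $G^\perp$ in $\C^n$ identifies the $j$ coordinates involved in $G$ (or sends some of them to $0$, in the strong case), so the resulting arrangement on the $j$ remaining free variables no longer ``sees'' the sign changes and hyperplanes $x_i=0$; it is a type $A_{j-1}$ arrangement, and the surviving subspaces of dimension $\geq j-s$ are precisely $\widetilde{\G}_s(A_{j-1})$. One must also check that the length of the partitions that can appear is $\leq s$ (a subspace of the maximal $B_n$-building set lies in $\widetilde{\G}_s(B_n)$ exactly when its dimension is $\geq n-s$, i.e.\ the associated singular partition $(r,\lambda)$ has $\ell(\lambda)+(\text{number of sign-parts})\leq s$), and that the ``first step'' of the induction produces a type-$B$ contribution rather than a type-$A$ one.

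Second, I would set up the inductive formula in the spirit of Theorem \ref{formulamax}: grouping the minimal subspaces of $\widetilde{\G}_s(B_n)$ by their form, Proposition \ref{propindu} gives
\[
P(Y_{\widetilde{\G}_s(B_n)})(q)=1+\sum_{j=1}^{s}\frac{q^{n+1-j}-q}{q-1}\,b_{n,j}\,P(Y_{\widetilde{\G}_s(A_{j-1})})(q),
\]
where $b_{n,j}$ is the number of subspaces of $\widetilde{\G}_s(B_n)$ whose quotient has $j$ free variables, i.e.\ the number of subspaces in the maximal $B_n$-building set of ``co-length'' $j$. The exponent $q^{n+1-j}$ rather than $q^{n-j}$ reflects that in type $B_n$ a subspace collapsing to $j$ free variables has dimension $j-1$ (one less than its type-$A$ analogue on $j$ symbols because of the ambient $x_i=0$ directions). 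Computing $b_{n,j}$ is a purely combinatorial count: choose a strong part (possibly a weak singleton / the $\{0\}$ subspace) of size $k-1$ in $\binom{n}{k-1}$ ways, partition the remaining $n+1-k$ elements into weak blocks with their two-coloring, leaving $j-1$ free variables among the weak part — the number of such set-partitions with the $2$-coloring is $S(n+1-k,j-1)\,2^{n+1-(j-1)-k}$ — and summing over $k$ yields exactly the bracketed coefficient of $h_{n,j}(q)$. Thus $\frac{q^{n+1-j}-q}{q-1}\,b_{n,j}=h_{n,j}(q)$, and the displayed recursion becomes $P(Y_{\widetilde{\G}_s(B_n)})(q)=1+\sum_{j=1}^{s}h_{n,j}(q)\,P(Y_{\widetilde{\G}_s(A_{j-1})})(q)$.

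Third, I would feed in Theorem \ref{formulafamigliainduttivatilda}, which expresses $P(Y_{\widetilde{\G}_s(A_{j-1})})(q)$ as $1+\sum f_{j_2,j_1}\cdots f_{j,j_{k-1}}$ over chains $(j_1,\ldots,j_{k-1},j)\in J_{k}(s)$ (note $j$ is the last entry). Substituting and reindexing the outer sum over $j$ as the last-but-one entry of a longer chain ending in $n$, the composite sum telescopes into a single sum over chains $(j_1,\ldots,j_k)\in J_k(s+1)$ with the product $h_{j_2,j_1}\cdots h_{j_k,j_{k-1}}h_{n,j_k}$, and the range of $k$ becomes $1\leq k\leq\lfloor\frac{s+2}{2}\rfloor$ because the chain may now use one more allowed value ($j_k$ ranges up to $s+1$ instead of $s$). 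The ``$+1$'' terms from the nested formulas all combine into the single leading $1$. This bookkeeping — matching the $J_k(s)$ indexing of the type-$A$ input with the $J_k(s+1)$ indexing of the desired output, and checking that the gap condition $j_i-j_{i-1}\geq 2$ is preserved at the junction — is the main technical obstacle, though it is entirely parallel to the reindexing already carried out in the proof of Theorem \ref{ulterioremiglioramentoformulamax}. I would also separately verify the boundary cases $s=0$ (where the sum over $J_1(1)$ gives $1+h_{n,1}(q)$, matching the known minimal-model-without-hyperplanes computation) and $s=n-2$ (consistency with the maximal $B_n$ model), and confirm the base of the induction $P(Y_{\widetilde{\G}_s(A_0)})(q)=1$.
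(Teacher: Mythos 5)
There is a genuine gap, and it is the central structural claim of your second paragraph: the quotient building sets are \emph{not} of type $A$. If $G\in\widetilde{\G}_s(B_n)$ has form $(r,\lambda)$, the quotient space is spanned by one coordinate per weak block of $G$, and the images $(A+G)/G$ are exactly the subspaces of $\Cc_{B_n}$ of dimension $>\dim G$ that contain $G$, read on those free variables: one may still merge weak blocks with \emph{either} sign, and merge blocks into a (possibly newly created) strong part, so the quotient arrangement still sees the sign changes and the hyperplanes $x_i=0$. Hence the quotient is the maximal building set of type $B_{j-1}$ (equivalently $\widetilde{\G}_s(B_{j-1})$, since $j-1\leq s$), not $\widetilde{\G}_s(A_{j-1})$. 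This is precisely what inductivity of the family $\{\widetilde{\G}_s(B_n)\}$ in the sense of Definition \ref{defgeneraleinductive} means (quotients of members of the $B$-family are members of the $B$-family), and it is how the paper argues: grouping the quotients by dimension gives $P(Y_{\widetilde{\G}_{s}(B_n)})(q)=1+\sum_{j=1}^{s+1}h_{n,j}(q)\,P(Y_{\widetilde{\G}_{s}(B_{j-1})})(q)$, which is then solved by induction entirely inside type $B$, with base $P(Y_{\widetilde{\G}_s(B_0)})(q)=1$; no type $A$ input is needed. Your combinatorial count of the coefficient (choice of the strong part, Stirling number for the weak blocks, the factor $2^{n+1-(j-1)-k}$ for the signs) is correct and matches the bracket in $h_{n,j}(q)$, but your outer sum should run to $j=s+1$, not $j=s$.

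The misidentification is not cosmetic: it changes both the numbers and the shape of the answer. For example, the dimension-$3$ quotient occurring in the recursion is the maximal $B_3$ model, whose Poincar\'e polynomial is $q^2+14q+1$, whereas the maximal $A_3$ model you would substitute has $q^2+8q+1$. Moreover, feeding Theorem \ref{formulafamigliainduttivatilda} into your recursion would produce products of the form $f_{j_2,j_1}(q)\cdots f_{j_k,j_{k-1}}(q)h_{n,j_k}(q)$, mixing $f$'s and $h$'s; these cannot ``telescope'' into the all-$h$ products $h_{j_2,j_1}(q)\cdots h_{j_k,j_{k-1}}(q)h_{n,j_k}(q)$ of the statement (already $f_{3,1}(q)=q$ while $h_{3,1}(q)=q^2+q$). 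So the third step of your plan fails as written; the repair is to recognize the quotients as maximal $B_{j-1}$ building sets and run the induction within the $B_n$ family, exactly as the paper does.
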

\begin{proof}
We can compute the Poincar\'e polynomials using  the   strategy  described in Section  \ref{secformulamax}, i.e.   by removing at each step a minimal element and considering the quotient.
  Since the family of  building sets \({\widetilde \G}_s(B_n)\) is inductive,   every quotient  is again a building set of type  \({\widetilde \G}_s(B_j)\).  We  then have the following inductive formula:
   \[P(Y_{{\widetilde \G}_{s}(B_n)})(q)=1+\sum_{
\begin{array}{c}
j=n-s+1,....,n 
\end{array}
}\frac{q^{j-1}-q}{q-1} \binom{n}{j} 2^{j-1}P(Y_{{\widetilde \G}_{s}(B_{n-j+1})})(q)+\]\[+\sum_{
\begin{array}{c}
j=n-s,....,n-1 
\end{array}
}\frac{q^{j}-q}{q-1} \binom{n}{j} P(Y_{{\widetilde \G}_{s}(B_{n-j})})(q)+  \]\[+\sum_{{\begin{array}{c}
(r,\lambda) \in SB\Lambda_n\\
 l(\lambda)\leq s
\end{array}}}\frac{q^{n-l(\lambda)}-q}{q-1} \binom{n}{r}2^{n-r-l(\lambda)}t_{\lambda} P(Y_{{\widetilde \G}_{s}(B_{l(\lambda)})})(q)\]
 The first (res. second) addendum describes the quotients by weak (resp. strong) subspaces in $\F_{B_n}\cap  {\widetilde \G}_{s}(B_n)\).
 The third addendum describes the quotients by subspaces in \({\widetilde \G}_s(B_n)\)  whose form \((r,\lambda)\) belongs to  \( SB\Lambda_n\).
 For every \(j=1,...,s+1\)  we can regroup all the subspaces which have dimension \(n+1-j\), which are 
 \[  \sum_{k=1}^{n+1-(j-1)}\binom{n}{k-1}S(n+1-k,j-1)2^{n+1-(j-1)-k}\]
 subspaces.
 Therefore we obtain 
  \[P(Y_{{\widetilde \G}_{s}(B_n)})(q)=1+\sum_{
\begin{array}{c}
1\leq j\leq s+1\end{array}}h_{n,j}(q) P(Y_{{\widetilde \G}_{s}(B_{j-1})})(q)\]

Since \(j\leq s+1\)  we have that   \({\widetilde \G}_{s}(B_{j-1})\)   is equal to the maximal building set   associated to the root arrangement \(B_{j-1}\) and the proof can be concluded by induction(as a base for the induction we put \(P(Y_{{\widetilde \G}_{s}(B_{0})})(q)=1\)).
\end{proof}

\begin{rem}
 Since \({\widetilde  \G}_{n-2}(B_n)\) and \(\G_{n-2}(B_n)\) differ only in the subspaces of dimension 1, this formula includes as a particular case (\(s=n-2\))  the formula  for maximal models (see \cite{GaiffiServenti}, where a formula was obtained using a different combinatorial argument).  
\end{rem}

Now we are ready to  describe formulas for the Poincar\'e polynomials of the models \(Y_{\G_{s}(B_n)}\):  as in the \(A_n\) case, these turn out to be interpolations between the formula for minimal models and the formula for maximal models.

In \cite{YuzBasi}, \cite{GaiffiBlowups}
the Poincar\'e series \[\Phi_B(q,t)=\sum_{n\geq 1, i}  dim \ H^{2i}(Y_{\G_{0}(B_n)}, \Q) q^i \frac{t^n}{2^nn!}\] for the minimal models has been computed in the following way.
Let \(\lambda_B(q,t)\) be the series  which counts the contribution of basis monomials whose associated nested set is represented by a tree which has only weak vertices. We have:  
\begin{displaymath}
	\lambda_B(q,t)=\frac{1}{2}\lambda(q,t) 
	\end{displaymath}
where \(\lambda(q,t)\) is the corresponding series for the \(A_{n}\) case (see Section \ref{secinductivefamilies}).

Then one observes that the series \(\mu_B\) which counts  the contribution of basis monomials whose associated nested set is represented by a tree which has at least a strong vertex is provided by the relation:
\[\mu_B(q,t)=\frac{1}{1-\gamma_B(q,t)}-1\]
where \[ \gamma_B(q,t)=\frac{e^{q\lambda_B(q,t)}-qe^{\lambda_B(q,t)}}{q-1}+1\]

Then one obtains  \(\Phi_B(q,t)\) as  \(e^{\lambda_B(q,t) }(\mu_B+1)-1\).

Now we need the following modification  \(\Phi_B(q,t,y)\) of  \(\Phi_B(q,t)\),  where the powers of the variable \(y\)  take into account the number of maximal subspaces in  the nested sets associated to basis elements:
\[ \Phi_B(q,t,y)= e^{y\lambda_B(q,t) }(y\mu_B+1)-1\]

 \begin{teo}
 \label{formulafamigliainduttivabn}
For every \(n\geq 2\) and  \(0\leq s\leq n-2\)  we have the following  formula for the Poincar\'e polynomial of the models \(Y_{\G_{s}(B_n)}\):
 \[\frac{t^n}{2^nn!}P(Y_{\G_{s}(B_n)})(q)=\frac{t^n}{2^nn!}P(Y_{{\widetilde \G}_{s}(B_n)})(q)+\]\[+\sum_{
\begin{array}{c}
s<j\leq n-2
\end{array}
}\Phi_B{_{|{\small \begin{array}{c}
deg\ y=j\\
 deg\ t =n
 \end{array}}}}(q,t,1)P(Y_{{\widetilde \G}_{s}(B_j)})(q) \]
 \end{teo}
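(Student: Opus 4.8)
The plan is to follow exactly the strategy that worked in the $A_{n-1}$ case (Theorem \ref{formulafamigliainduttiva}), replacing the combinatorics of $S_n$-nested sets by those of $W(B_n)$-nested sets. The first step is to describe the $\G_s(B_n)$-nested sets, since by Theorem \ref{base coomologia} these supports carry the Yuzvinski basis monomials. Given a $\G_s(B_n)$-nested set $S$, I would split it as $S=S_1\sqcup S_2$, where $S_1$ consists of those subspaces already lying in ${\widetilde \G}_s(B_n)$ and $S_2$ consists of those in $\F_{B_n}-{\widetilde \G}_s(B_n)$. As in the $A_{n-1}$ case, $S_1$ is totally ordered by inclusion (it is ${\widetilde \G}_s(B_n)$-nested and ${\widetilde \G}_s(B_n)$ is built from a maximal building set, cf.\ the remark after Definition \ref{Gnested}), so if $S_1\neq\emptyset$ it has a minimal element $A$; and $S_2$ must be $\F_{B_n}$-nested, with the sum $B$ of its elements not belonging to ${\widetilde \G}_s(B_n)$ and, when $S_1\neq\emptyset$, strictly contained in $A$. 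Correspondingly every basis monomial factors as $m=m_{S_1}m_{S_2}$, and $m_{S_2}$ is a basis monomial of $Y_{\F_{B_n}}$.

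Next I would organize the sum over $S_2$: let $A_1,\dots,A_t$ be the maximal elements of $S_2$. Representing them as labelled subsets of $\{1,\dots,n\}$ (one possibly strong, the rest weak) and, if needed, adding weak singletons, I associate to $S_2$ a singular partition whose ``weak+strong length'' I call $j$ — more precisely $j$ is the number of maximal blocks when we also count a present strong block. The relevant restrictions are $j\le n-2$ (all $A_i$ must have dimension $\ge 2$, since a weak singleton or the subspace $\{0\}$ is not in the support of a basis element) and $j>s$ (otherwise $A_1+\cdots+A_t\in\G_s(B_n)$ and $S$ would not be $\G_s(B_n)$-nested with these as maximal elements of $S_2$). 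The contribution of all the $m_{S_2}$ factors whose maximal blocks give such a configuration of ``length'' $j$ is, up to the normalizing factor $\frac{t^n}{2^n n!}$, the coefficient $\Phi_B{_{|\,\deg y=j,\ \deg t=n}}(q,t,1)$: this is precisely why $\Phi_B(q,t,y)$ was defined with the variable $y$ recording the number of maximal subspaces, with the extra $(y\mu_B+1)$ term handling the case where one maximal block is strong.

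Then, for a fixed such $m_{S_2}$ factor, I would set up the bijection between the admissible factors $m_{S_1}$ and the cohomology basis monomials of $Y_{{\widetilde \G}_s(B_j)}$: every subspace in $S_1$ contains all of $A_1,\dots,A_t$, so collapsing each block $A_i$ to a single new symbol (keeping the strong label on the strong block, if present) represents $S_1$ as a nested family for a $B_j$-type arrangement on $j$ symbols; since $j>s$, the relevant quotient building set is exactly ${\widetilde \G}_s(B_j)$, and the exponents are unchanged because of the form of $d_{\Hc,B}$ and the definition of admissible monomials in Section \ref{subsecwonderful}. Summing over $j$ with $s<j\le n-2$, together with the ``$S_2=\emptyset$'' term which contributes $\frac{t^n}{2^n n!}P(Y_{{\widetilde \G}_s(B_n)})(q)$, yields the stated formula. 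The main obstacle I expect is the bookkeeping around the strong block: one must verify that the combinatorial encoding by singular partitions is consistent both in the $\F_{B_n}$-nested condition on $S_2$ (at most one strong part, no empty strong part created by the collapsing) and in identifying the quotient as a genuine $B_j$-arrangement rather than something degenerate, and that the generating-function translation of ``number of maximal subspaces, one of which may be strong'' into the factor $(y\mu_B+1)$ is exactly right — this is the step where a sign error or an off-by-one in $j$ versus $j-1$ is most likely to creep in, and where comparison with Theorem \ref{formulabntilde} (which already fixes the $h_{n,j}$ normalization) provides the needed consistency check.
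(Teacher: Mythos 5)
Your overall route is the paper's intended one (the printed proof is literally ``similar to the $A_{n-1}$ case''), and the skeleton is right: the splitting $S=S_1\sqcup S_2$, the bounds $s<j\le n-2$, the factorization $m=m_{S_1}m_{S_2}$, and the identification of the $S_1$--part with a chain in a building set of type ${\widetilde\G}_s(B_\bullet)$. The genuine gap is exactly at the point you deferred to a ``consistency check'': the strong block. When a maximal element of $S_2$ is strong, that block does \emph{not} survive as a symbol in the quotient: all of its coordinates are annihilated, so the subspaces of $\Cc_{B_n}$ of dimension $\ge n-s$ containing $B=A_1+\cdots+A_t$ correspond to the subspaces of $\Cc_{B_m}$ of dimension $\ge m-s$, where $m$ counts only the weak blocks plus the untouched singletons. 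Your claim that ``collapsing each block to a new symbol, keeping the strong label'' identifies the admissible $m_{S_1}$ factors with the basis of $Y_{{\widetilde\G}_s(B_j)}$ with $j=m+1$ fails: every element of $S_1$ has strong part containing the strong block, so in your encoding you would only see the elements of $\Cc_{B_j}$ whose strong part contains the distinguished symbol, with the wrong dimension shift, and their count is not $P(Y_{{\widetilde\G}_s(B_j)})$. Concretely, take $n=4$, $s=1$, $S_2=\{A\}$ with $A$ the strong subspace of support $\{1,2\}$ (dimension $2$, so $A\in\G_1(B_4)\setminus{\widetilde\G}_1(B_4)$ and $c_A$ is a basis class). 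There are exactly five subspaces of $\Cc_{B_4}$ of dimension $\ge 3$ containing $A$, and the generating polynomial of the admissible $m_{S_1}$ factors is $1+q=P(Y_{{\widetilde\G}_1(B_2)})$, not $P(Y_{{\widetilde\G}_1(B_3)})=1+14q+\cdots$ as your convention $j=3$ would require.

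Consequently the $y$--bookkeeping must be arranged so that $\deg y$ equals the number of \emph{weak} parts (singletons included) of the singular partition attached to $S_2$; the strong-tree factor must not carry a $y$, i.e.\ the coefficient paired with $P(Y_{{\widetilde\G}_s(B_j)})$ is extracted from $e^{y\lambda_B}(\mu_B+1)-1$ rather than from $e^{y\lambda_B}(y\mu_B+1)-1$ read literally (so the displayed statement itself has to be read with this grading). This is not a harmless normalization to be fixed afterwards: with your pairing, the coefficient of $q$ for $n=4$, $s=1$ comes out as $41+20=61$ (the four strong triples are double counted and the six strong pairs are lost), whereas both the left-hand side and the weak-parts grading give $41+22=63$. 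So the very check you proposed would have exposed the error; as written, the one step where the $B_n$ case genuinely differs from the $A_{n-1}$ case --- the treatment of the strong block in the quotient and in the grading of $\Phi_B$ --- is carried out incorrectly, and the proof does not close without redoing it along the lines above.
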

\begin{proof}
The proof is similar to the one  in the $A_{n-1}$ case (see Theorem \ref{formulafamigliainduttiva}).
\end{proof}


\section{The interplay between boolean and root arrangements}
The boolean arrangement  is a subarrangement of the  arrangement of type \(B_n\) in \(\C^n\).  The irreducibles are the lines in \((\C^n)^*\)  whose  annihilators  are  the hyperplanes  \(x_i=0\). 
The maximal building set is given by the subspaces of   \((\C^n)^*\) whose  annihilators  are  the subspaces  \(x_{i_1}=x_{i_2}=\cdots = x_{i_k}=0\) (\(k=1,2,...,n\)).
 We can define regular models: 
\begin{defin}
Given \(n\geq 1\) and \(-1\leq s\leq n-2\) we denote by  \(\G_s(Bo(n))\) the  building set  which contains  the irreducible subspaces and also all the subspaces of the maximal building set which have dimension \(\geq n-s\). 

\end{defin} 

For \(n=1\) there is only one building set.  Given \(n\geq 2\), one immediately observes that the regular building sets  \(\G_s(Bo(n))\), with  \(-1\leq s\leq n-2\),  are all the \(S_n\) invariant building sets  which contain the irreducibles (for \(s=-1\) we have the building set of irreducibles, for \(s=n-2\) the maximal building set). For every fixed \(s\geq -1\),  the  family  \(\G_s(Bo(n))\) is inductive.

The maximal projective model \(Y_{ \G_{n-2}(Bo(n))}\) is isomorphic to the toric variety of type \(A_{n-1}\) (see Procesi \cite{procesitoric}, Henderson \cite{HenPisa}).
We observe that there is the following chain of inclusions among building sets:
\[\G_{-1}(Bo(n))\subsetneq \G_{0}(Bo(n))  \subsetneq \cdots   \subsetneq \G_{n-2}(Bo(n)) \subsetneq \G_0(B_n)\subsetneq \cdots \subsetneq \G_{n-2}(B_n)\]
Then we have  the following chain of projections among the associated models:
\[Y_{\G_{-1}(Bo(n))}\leftarrow Y_{\G_{0}(Bo(n))}  \leftarrow  \cdots  \leftarrow Y_{\G_{n-2}(Bo(n))} \leftarrow  Y_{ \G_0(B_n)}\leftarrow  \cdots \leftarrow Y_{\G_{n-2}(B_n)}\]
which gives ring injections in cohomology: 
 {\small \[H^*(Y_{\G_{-1}(Bo(n))})  \rightarrow H^*(Y_{\G_{0}(Bo(n))})  \rightarrow \cdots \rightarrow    H^*(Y_{\G_{n-2}(Bo(n))} ) \rightarrow   H^*(Y_{ \G_0(B_n)})\rightarrow    \cdots \rightarrow   H^*(Y_{\G_{n-2}(B_n)})\]}
 These ring injections can be described explicitely in terms of the bases, according to  the following general rule, which depends on the blow-up construction.
Let \(\T \subset \G\) be two building sets of subspaces of \(\C^n\).  
 For every \(A\in \T\) let us define \[\U_{\T,\G}(A)=\{B\in \G-\T \: | \: A \ \hbox{is maximal among the subspaces in} \ \T \ \hbox{which are included in} \ B\}\]

Then the  ring  injection
 \[R_{\T,\G}\: : \:  H^*(Y_{\T} ) \rightarrow  H^*(Y_{\G} ) \]
 is described by:
 \[\forall A\in \T \quad R_{\T,\G}(c_A)=c_A+\sum_{B\in \U_{\T,\G}(A)}c_B\]

For instance, if \(\T\) is equal to \({\mathcal F}_{A_{5}}\), \(\G=\G_{(2,2,1,1)}\) is the maximal model of the arrangement \(A_5\) and \(A=(1,2,3)\), we have:
\[R_{{\mathcal F}_{A_{5}},\G_{(2,2,1,1)}}(c_{(1,2,3)})=c_{(1,2,3)}+c_{(1,2,3)(4,5)}+c_{(1,2,3)(4,6)}+c_{(1,2,3)(5,6)}+c_{(1,2,3)(4,5,6)}\]
 
\section{Remarks on the  classification in case $D_n$}
\label{secdn}
Let us now consider the root arrangement of type $D_n$ in $\C^n$ and denote by $W(D_n)$ its Weyl group. The building set of
irreducibles is the same as in the  $B_n$ case,  except for the strong sets, which  must now  have cardinality at least 2. Hence, as in
the $B_n$ case, we can put the elements of the maximal model $\Cc_{D_n}$ in a bijective correspondence with the
partitions of $\{1,\ldots n\}$ with at most one strong part and  such that the strong part (if there is one) is required to have 
cardinality \(\geq2\).
\\
With this setting, in order to classify all the $W(D_n)$ invariant building sets which contain the irreducibles  we can repeat
the same arguments used  in the case $B_n$.  We start with  a slightly different set of couples, in fact we replace  $S\Lambda_n$ with $\overline{S}\Lambda_n$, where
$\overline{S}\Lambda_n$ is the set of couples $(r,\lambda)$ such that  $r\neq 1$ and $\lambda$ is a partition of
$n-r$. 
\begin{defin}
 A singular building partition of type $D_n$ ($n\geq 4$) is a couple $(r,\lambda)\in\overline{S}\Lambda$ with
$\lambda\neq (1,1,1,\ldots,1)$ and, if $r=0$, $\lambda\in B\Lambda_n -\{(n)\}$. We will denote by $SB\Lambda_n(D_n)$ the
poset of all singular building partitions with the ordering induced by $\geq$.
\end{defin}
\begin{defin} Let \(n\geq 4\). Let us consider
\((r,\lambda)\in SB\Lambda_n(D_n)\). If $n$ is odd or $r\neq 0$  we
define the set \(\DG_{(r,\lambda)}\) as the union of the building set of irreducibles of type \(D_n\)  with the set made
by  all the subspaces of the form $(s,\gamma)$ where $(s,\gamma)\in SB\Lambda_n(D_n)$ and $(s,\gamma)\geq (r,\lambda).$
\end{defin}
\begin{rem}
 If $n$ is an even number greater than or equal to $4$, $r=0$ and $\lambda$ is a partition of $n$ made by even numbers we
have two uncomparable $W(D_n)$ invariant building sets (containing the irreducibles of type $D_n$) associated to the
singular partition $(0,\lambda)$.  To see this suppose that $\lambda=(\lambda_1,\ldots,\lambda_m)$ and  call
\begin{itemize}
\item 
$I(\lambda,+)$ the set of subspaces whose form  is $\lambda$, and  in which every subset  \(\{i_1,i_2,...,i_k\}\) represents the 
annihilator of   $\{x_{i_1}=\cdots =x_{i_{k-1}}=x_{i_k}\}$;  
\item $I(\lambda,-)$ the set of subspaces whose form  is $\lambda$ in which the first subset  represents the annihilator of
 $\{-x_{i_1}=x_{i_2}=\cdots =x_{i_{\lambda_1-1}}=x_{i_{\lambda_1}}\}$ and the other subspaces are as in $I(\lambda,+)$. 

\end{itemize}
\end{rem}

As a consequence of this remark, when $n$ is even and $\lambda$ is a partition of $n$ made by even numbers, in the poset
of singular building partitions,  the vertex corresponding to $(0,\lambda)$ splits  into  two vertices $(0,\lambda,+)$
and $(0,\lambda,-)$; we have the same ``double vertex" in the corresponding poset of the building sets \(\DG_{(r,\lambda)}\): $D\G(0,\lambda,+)$ and
$D\G(0,\lambda,-)$.
\begin{prop}
 Given $n\geq 4$, and two different singular building partitions $(r,\lambda)$ and $(s,\gamma)$, we have that
$D\G_{(s,\gamma)}\subset D\G_{(r,\lambda)}$ if and only if $(s,\gamma)\geq (r,\lambda).$\footnote{The order relation \(\geq\) is the same as in the \(B_n\) case with the only difference that when we compare two partitions $(0,\lambda,\pm)$ and $(0,\gamma,\pm)$, in order to   have $(0,\lambda,\pm)\geq (0,\gamma,\pm)$ we also request that the signs coincide.}
\end{prop}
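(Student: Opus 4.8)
The plan is to mimic the proof of Proposition in the $A_{n-1}$ case (the proposition stating $\G_\lambda \subseteq \G_\mu$ iff $\lambda > \mu$), adapting it to the poset of singular building partitions $SB\Lambda_n(D_n)$. First I would unwind the definitions: by construction $D\G_{(r,\lambda)}$ consists of the irreducibles of type $D_n$ together with all subspaces whose form $(s,\gamma)$ satisfies $(s,\gamma)\geq(r,\lambda)$ in $SB\Lambda_n(D_n)$. So the inclusion $D\G_{(s,\gamma)}\subseteq D\G_{(r,\lambda)}$ holds iff every subspace appearing in the second set appears in the first, i.e.\ iff $\{(s',\gamma') : (s',\gamma')\geq(s,\gamma)\}\subseteq \{(s',\gamma') : (s',\gamma')\geq(r,\lambda)\}$ as sets of forms (the irreducibles are common to both, so they cause no trouble).

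The ``if'' direction is then immediate from transitivity of $\geq$: if $(s,\gamma)\geq(r,\lambda)$ then any $(s',\gamma')\geq(s,\gamma)$ satisfies $(s',\gamma')\geq(r,\lambda)$, and by Remark \ref{remconfronto} (its $D_n$ analogue) a subspace of form $(s',\gamma')$ which sits in $D\G_{(s,\gamma)}$ indeed sits in $D\G_{(r,\lambda)}$. For the ``only if'' direction, assume $D\G_{(s,\gamma)}\subseteq D\G_{(r,\lambda)}$ with $(s,\gamma)\neq(r,\lambda)$. Since $(s,\gamma)$ is a singular building partition, $\G_{(s,\gamma)}$ contains a subspace of form $(s,\gamma)$ itself (this is $(s,\gamma)\geq(s,\gamma)$), and that subspace is \emph{not} an irreducible of type $D_n$ because singular building partitions exclude $(1,1,\dots,1)$ and, when $r=0$, force $\lambda\in B\Lambda_n-\{(n)\}$; in either case the form is not the form of an irreducible. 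Hence this subspace must lie in $\widetilde{D\G}_{(r,\lambda)}$, so its form satisfies $(s,\gamma)\geq(r,\lambda)$ by definition of $D\G_{(r,\lambda)}$. This is exactly what we wanted.

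The main obstacle, and the place where the $D_n$ case genuinely differs from $B_n$, is the bookkeeping around the ``double vertices'': when $n$ is even and $\gamma$ (or $\lambda$) is a partition of $n$ into even parts, the form is refined by a sign, and the footnote's convention on $\geq$ must be respected. I would handle this by checking that the argument above goes through verbatim once one works with signed forms $(0,\gamma,\pm)$: a subspace of signed form $(0,\gamma,+)$ never equals a subspace of signed form $(0,\gamma,-)$, and the refined order $\geq$ on $SB\Lambda_n(D_n)$ (signs must match when both partitions are of the split type) is still transitive, so neither direction of the equivalence is affected. One should also double-check the degenerate boundary cases (e.g.\ small $n$, or $(r,\lambda)$ with $r$ close to $n$) but these are routine.

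Therefore the proof is essentially: \textbf{(1)} translate set inclusion of building sets into an inclusion of up-sets in the poset of (signed) forms, discarding the common irreducibles; \textbf{(2)} use transitivity of $\geq$ for one direction; \textbf{(3)} for the other direction, use that a singular building partition's own form is realized inside its building set and is not the form of an irreducible, forcing the order relation. The one nontrivial point to verify carefully is that the sign refinement in the even case does not break transitivity or the ``self-form is not irreducible'' observation, and I expect this to be a short check rather than a real difficulty, which is presumably why the paper states the proposition without a detailed proof.
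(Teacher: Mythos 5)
Your argument is correct and is exactly the unwinding the paper has in mind: the paper states this proposition without proof, treating it (as in the $A_{n-1}$ and $B_n$ cases) as immediate from the definition of $D\G_{(r,\lambda)}$ as irreducibles plus the up-set of (signed) forms above $(r,\lambda)$. Your two observations -- transitivity of $\geq$ for one direction, and the fact that a singular building partition's own form is realized in its building set but is never the form of an irreducible for the other -- are precisely the content being taken for granted, and your check that the sign refinement for even split partitions does not disturb either step is the right (routine) verification.
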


\begin{defin}
 Let $n$ be an odd number \( \geq 5\). 
 Given a set $\Sc=\{(r_1,\lambda^1),\ldots,(r_v,\lambda^v)\}$ of pairwise non comparable elements in
$SB\Lambda_n(D_n)$, we denote by $\G_\Sc$ the building set
$$\G_\Sc=D\G_{(r_1,\lambda^1)}\cup\ldots\cup D\G_{(r_v,\lambda^v)}.$$
\end{defin}
\begin{rem} If $n$ is even (\(n\geq 4\)) 
we have a similar definition  with respect to  to the poset with double vertices: 
 in the set of non
comparable elements $(0,\lambda,+)$ or $(0,\lambda,-)$ (or both) may appear.
\end{rem}

The proof of the following classification theorem is similar to the one in the cases $A_{n}$ and $B_n$: 
\begin{teo}
 Let $n\geq 4$. If $n$ is odd,  the $W(D_n)$ invariant building sets which contain the irreducibles are in bijection
with the  unions of sets of pairwise not comparable (with respect to inclusion) elements of the family $D\G_{(r,\lambda)}$
$((r,\lambda)\in SB\Lambda_n(D_n))$.\\
If $n$ is even we have the same statement, with respect to   the poset with double vertices. 
\end{teo}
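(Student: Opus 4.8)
The plan is to mirror, essentially verbatim, the argument used for Theorem \ref{teoclassificazione} (the $A_{n-1}$ case) and Theorem \ref{teoclassificazionebn} (the $B_n$ case), tracking the extra bookkeeping forced by the strong parts of cardinality $\geq 2$ and, when $n$ is even, by the splitting of the vertex $(0,\lambda)$ into $(0,\lambda,+)$ and $(0,\lambda,-)$. First I would show the easy direction: for each singular building partition $(r,\lambda)\in SB\Lambda_n(D_n)$ (with the appropriate sign if $n$ is even, $r=0$ and $\lambda$ has all even parts) the set $D\G_{(r,\lambda)}$ is indeed a $W(D_n)$ invariant building set containing the irreducibles — this is immediate from the definition together with Remark-type observations analogous to those preceding Theorem \ref{teoclassificazionebn}, and hence any finite union over a set of pairwise non comparable such partitions is again a building set (a union of building sets each containing $\F_{D_n}$ is building, by the same direct-sum check used in the $A$ and $B$ cases). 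So the map $\Sc\mapsto\G_\Sc$ does land in the target set.

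Next I would prove surjectivity. Let $\B$ be a $W(D_n)$ invariant building set with $\F_{D_n}\subseteq\B$. Assuming $\B\neq\F_{D_n}$, to every subspace in $\B-\F_{D_n}$ associate the singular partition in $SB\Lambda_n(D_n)$ giving its form (refined by a sign when $n$ is even, $r=0$ and all parts even), and take the minimal such partitions with respect to $\geq$. The core step is: if $(r,\lambda)$ is such a minimal partition and $(s,\gamma)$ is obtained from $(r,\lambda)$ by one admissible move (of type a, b or c), then $\B$ contains a subspace of the form $(s,\gamma)$, hence by $W(D_n)$ invariance all of them; so $D\G_{(r,\lambda)}\subseteq\B$. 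For moves of type a and b this is the same two-subspaces-and-their-sum trick as in Theorem \ref{teoclassificazione}/\ref{teoclassificazionebn}: exhibit $C,D\in\B$ of the form $(r,\lambda)$ chosen so that $C+D$ has the form $(s,\gamma)$, and invoke the building property to force $C+D\in\B$; one only needs to check that the relevant strong part keeps cardinality $\geq 2$ throughout, which is exactly what the definition of $\overline{S}\Lambda_n$ (forbidding $r=1$) guarantees. For the colouring move (type c) one takes a strong singleton — no, rather a strong subspace of cardinality $\geq 2$ is already in $\F_{D_n}$, and one shows as in the $B_n$ case that summing a suitable strong subspace with a weak one of the same underlying set produces the "coloured" (strong) subspace, again forced into $\B$ by the building property. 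Iterating over all minimal partitions $\gamma^1,\dots,\gamma^v$ (pairwise non comparable by choice) gives $\bigcup_i D\G_{\gamma^i}\subseteq\B$, and the reverse inclusion is the same one-line argument as in Theorem \ref{teoclassificazione}: any $D\in\B-\F_{D_n}$ has a form $\gamma$ with $\gamma\geq\gamma^i$ for some $i$, whence $D\in D\G_{\gamma^i}$.

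Finally, uniqueness of the decomposition $\B=\G_\Sc$ follows exactly as in the last paragraph of the proof of Theorem \ref{teoclassificazione}: if $\B=\bigcup D\G_{\lambda^i}=\bigcup D\G_{\theta^j}$, then since $D\G_{\theta^j}$ contains a subspace of the form $\lambda^1$ only when some $\theta^j\geq\lambda^1$, and symmetrically, pairwise non comparability forces the two index sets to coincide after reordering; one proceeds by induction. The main obstacle — really the only place where $D_n$ genuinely differs from $B_n$ — is the even-$n$ sign subtlety: when $n$ is even, $r=0$ and $\lambda$ has all even parts, one must verify that $I(\lambda,+)$ and $I(\lambda,-)$ really are separately $W(D_n)$ invariant (the Weyl group $W(D_n)$ uses only even numbers of sign changes, so it cannot move a $+$-configuration to a $-$-configuration), and that an admissible move out of $(0,\lambda,\pm)$ lands consistently in the corresponding signed or unsigned vertex; I would isolate this as a short lemma on how $W(D_n)$ acts on subspaces of the form $\lambda$, and otherwise the proof is a routine transcription of the $A_n$ and $B_n$ arguments, which is why the authors are content to omit it.
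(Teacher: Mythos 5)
Your proposal matches the paper's intent exactly: the paper omits the proof, stating only that it is ``similar to the one in the cases $A_n$ and $B_n$'', and your argument is precisely that transcription --- the minimal-form selection, the two-subspaces-sum trick for the admissible moves, and the antichain uniqueness step from Theorem \ref{teoclassificazione}. Your extra care with the even-$n$ splitting of $(0,\lambda)$ into $(0,\lambda,\pm)$ (via the parity invariant preserved by the even sign changes of $W(D_n)$) is exactly the content of the paper's preceding remark, so there is nothing to correct.
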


\section{Regular building sets in case $D_n$}
\label{regdn}
We can
compute the Poincar\'e polynomial of the $D_n$ maximal model by subtracting from the $B_n$ one the contribution
provided by the basis monomials whose associated  $\Cc_{B_n}$-nested set  contains  at least an element  with  strong part of cardinality one. If we denote
by $\Gamma_\text{max}^n(q)$ such contribution we have the following
\begin{teo}
$$\Gamma_\text{max}^n(q)=n\left(\sum_{
\begin{array}{c}
 1\leq k \leq \lfloor \frac{n-1}{2} \rfloor \\
\,  \\
 (j_1,\ldots,j_{k+1})\in \tilde{J}_k(n-1)
\end{array}
}f(n-1,j_k)\cdots f(j_3,j_2)\tilde{f}(j_2,j_1)P_{\text{max,}B_{j_1}}(q)\right)$$
where, given, $n>m$
$$\tilde{f}(n,m):=S(n,m)\frac{q^{n-m+1}-q}{q-1}$$
and $\tilde{J}_k(n-1)$ is the set of $(k+1)$-tuples $(j_1,\ldots,j_{k+1})$ such that $1\leq j_1<j_2<\ldots<j_k<j_{k+1}=n-1$,
$j_2-j_1\geq 1$ and $j_i-j_{i-1}\geq 2$ for $i\in\{3,\ldots,k+1\}$.
\end{teo}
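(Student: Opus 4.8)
```latex
\textbf{Proof proposal.} The plan is to isolate, inside the $B_n$ maximal model $Y_{\text{max},B_n}$, the span of those basis monomials whose supporting $\Cc_{B_n}$-nested set contains at least one subspace whose associated coloured diagram has a strong part of cardinality exactly $1$ (a ``bad'' element), and to show that the Poincar\'e polynomial of this span is exactly $\Gamma_\text{max}^n(q)$; then subtracting it from $P(Y_{\text{max},B_n})(q)$ gives $P(Y_{\text{max},D_n})(q)$, since the $D_n$ maximal model has the same combinatorial data as the $B_n$ one except that strong singletons are forbidden. The starting point is the Yuzvinski basis (Theorem \ref{base coomologia}): each basis monomial $m_f$ is determined by its support $S=\supp(f)$, a $\Cc_{B_n}$-nested set, together with admissible exponents. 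Because $\Cc_{B_n}$ is closed under the sum, such an $S$ is \emph{totally ordered} by inclusion (as noted after Definition \ref{Gnested}), so it is represented by a single chain; a ``bad'' monomial is one whose chain contains a member of the form $(1,\lambda)$, i.e.\ one coloured singleton box sitting on top.

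The key structural step is to ``cut the chain at the highest bad element''. If $G$ is the smallest subspace in $S$ of the form $(1,\lambda)$ (so $G$ is the minimal bad element, its annihilator being $\{x_{i}=0\}$ union some weak identifications), then the part of $S$ lying strictly above $G$ is a chain in the maximal building set of the quotient $(\C^n)^*/G$, while the part below $G$ lives inside $Y_{\oG}$ for the arrangement obtained by restricting to $G$, which carries a $B_{j_1}$-type structure where $j_1$ counts the free variables at the bottom. The factor of $n$ in the formula is the choice of which coordinate $x_i$ is singled out by the bad singleton. Using Proposition \ref{propindu} iteratively (exactly as in the proof of Theorem \ref{formulamax}), the chain above $G$ contributes a telescoping product of $f$-type polynomials $f(n-1,j_k)\cdots f(j_3,j_2)$, one factor per ``step'' of the chain as one passes from a subspace to the next; the first step, which records the jump from $G$ up to the next element and folds in the Stirling count of ways to distribute the remaining $n-1$ coordinates into the weak parts, produces the modified polynomial $\tilde f(j_2,j_1)=S(j_2,j_1)\frac{q^{j_2-j_1+1}-q}{q-1}$ (the shifted exponent $j_2-j_1+1$ reflecting the extra dimension coming from the coloured box). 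Everything strictly below contributes the full maximal $B_{j_1}$ model, hence the factor $P_{\text{max},B_{j_1}}(q)$. Regrouping basis monomials by the list of lengths $(j_1,\dots,j_{k+1}=n-1)$ of the successive weak-part partitions along the chain gives precisely the index set $\tilde J_k(n-1)$: the constraint $j_2-j_1\ge 1$ (rather than $\ge 2$) is exactly the relaxation caused by the coloured singleton occupying one slot, while $j_i-j_{i-1}\ge 2$ for $i\ge 3$ is the usual admissibility gap from Theorem \ref{ulterioremiglioramentoformulamax}.

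Concretely I would organize the argument as: (1) recall that a $D_n$-nested set is a $B_n$-nested set none of whose members has a strong singleton, hence $P(Y_{\text{max},D_n})(q)=P(Y_{\text{max},B_n})(q)-\Gamma_\text{max}^n(q)$ where $\Gamma_\text{max}^n(q)$ is the Poincar\'e polynomial of the span of bad basis monomials; (2) set up the blow-up/induction of Proposition \ref{propindu} adapted to remove, step by step, the elements of a chain starting from a bad bottom element, noting as in Theorem \ref{formulamax} that the quotient building sets stay maximal of the appropriate type; (3) read off the combinatorial weights --- the binomial/Stirling counts $S(\cdot,\cdot)$ of ways to realize each jump, the factor $n$ for the singled-out coordinate, the $\frac{q^{\bullet}-q}{q-1}$ factors from the projective-bundle formula with the shift $+1$ at the first step --- and (4) regroup by partition lengths to obtain the stated sum over $\tilde J_k(n-1)$, concluding by induction with base $P_{\text{max},B_0}(q)=P_{\text{max},B_1}(q)=1$.

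The main obstacle I anticipate is step (3): carefully justifying the \emph{shift} in the exponent of the first factor ($q^{n-m+1}$ in $\tilde f$ versus $q^{n-m}$ in $f$) and the corresponding relaxed gap condition $j_2-j_1\ge 1$. This requires tracking precisely how the extra ``coloured'' dimension contributed by the strong singleton interacts with the $d_{\Hc,B}$ numbers governing admissible exponents (Definition of $\G$-admissible functions), and checking that no double-counting occurs at the interface between the chain above $G$ and the $B_{j_1}$ model below it --- i.e.\ that the minimal element $G$ is counted on the ``upper'' side exactly once. Once the bookkeeping of this single transition is pinned down, the rest is a routine iteration of the already-established inductive machinery of Section \ref{secformulamax}.
```
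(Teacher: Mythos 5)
Your overall strategy is the paper's: subtract from the $B_n$ maximal model the contribution of basis monomials whose (totally ordered) support contains an element with a strong singleton, cut the chain at the minimal such element $G=N_1$, use the factor $n$ for the choice of the strong leaf, and explain the exponent shift in $\tilde f$ and the relaxed gap $j_2-j_1\geq 1$ by the dimension bookkeeping at $G$. However, there is a genuine error in your key structural step: you attach the two halves of the chain to the wrong factors. The subspaces lying \emph{above} $G$ all have a nonempty strong part (containing the singled-out coordinate), and in the quotient by $G$ they form a maximal nested set of type $B_{j_1}$, where $j_1$ is the number of weak blocks of $G$; it is this upper part that contributes the factor $P_{\text{max},B_{j_1}}(q)$. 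The subspaces lying \emph{below} $G$ have no strong part at all (by minimality of $G$ they cannot have strong singletons, and being contained in $G$ they cannot have larger strong parts), so they are weak partitions refining the weak part of $G$: an $A$-type chain on the $n-1$ non-strong leaves. It is this lower part that produces the telescoping product $f(n-1,j_k)\cdots f(j_3,j_2)$, while $\tilde f(j_2,j_1)$ records the passage from the element directly below $G$ (with $j_2$ weak blocks) up to $G$ itself: $S(j_2,j_1)$ counts the merging of those $j_2$ blocks into the $j_1$ weak blocks of $G$, and $d=\dim G-\dim(\text{element below})=j_2-j_1+1$ gives both the shifted exponent and the condition $j_2-j_1\geq 1$. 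Your assignment is the reverse on both counts, and it is not a harmless relabelling.

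Concretely, if you tried to carry out your plan, step (3) would fail: the chain above $G$ cannot yield plain Stirling factors $f(j_{i+1},j_i)$, because its members carry strong parts and sign data for weak blocks, so their enumeration involves binomial coefficients and powers of $2$ (as in the polynomials $h_{n,j}$ of the $B_n$ computation), not $S(\cdot,\cdot)$ alone; symmetrically, the purely weak refinements below $G$ cannot reproduce the $B$-type polynomial $P_{\text{max},B_{j_1}}$. Your parenthetical explanation of $\tilde f(j_2,j_1)$ (``the Stirling count of ways to distribute the remaining $n-1$ coordinates into the weak parts'') also betrays the confusion: distributing the $n-1$ remaining coordinates is counted by $S(n-1,\cdot)$ and occurs in $f(n-1,j_k)$ (or in $\tilde f(n-1,j_1)$ when nothing lies below $G$), not in a generic $S(j_2,j_1)$. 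There is also a small internal inconsistency (``cut at the highest bad element'' versus taking the smallest subspace of the form $(1,\lambda)$) and a misuse of $Y_{\oG}$, which is the model of the quotient and hence pertains to the part of the chain above $G$, not below it. Once the two sides of the cut are exchanged, the rest of your outline (regrouping by block numbers, induction with the $A$-type machinery of Section \ref{secformulamax}) does follow the paper's proof.
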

\proof
Fix $n\geq 2$ and let us see how a "bad" nested set is  done. Since the nested sets of the maximal model are totally
ordered by inclusion we must have a minimum element, say $N_1$, with strong part of
cardinality one. Hence, $N_1$ must have a weak part given by a partition in $j\geq 1$ (and \(j<n-1\)) parts of the remaining $n-1$
leaves
(otherwise the corresponding monomials wouldn't be admissible). The subspaces  included in this one, in the nested set we are dealing with, have no
strong part (by minimality) and they are obtained by splitting (in an admissible way) the weak part of $N_1$; on the
other hand, the family of the subspaces which lie  above $N_1$ may be thought as an admissibile $\Cc_{B_j}$ nested set.\\
From these remarks and since the strong leaf may be chosen in $n$ different ways, the claim  follows.\qed
\\ 
\begin{cor}
 $$P_{\text{max,}D_n}=P_{\text{max,}B_n}-\Gamma_\text{max}^n(q).$$
\end{cor}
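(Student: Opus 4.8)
The plan is to read the identity off directly from the Yuzvinski description of the cohomology (Theorem \ref{base coomologia}) together with the preceding theorem, so that the corollary is essentially a ``good ${}+{}$ bad ${}={}$ total'' decomposition of a single $\Z$-basis. The starting point, already recorded in Section \ref{secdn}, is that $\Cc_{D_n}$ is obtained from $\Cc_{B_n}$ simply by discarding the subspaces whose form $(r,\lambda)$ has $r=1$, i.e. those possessing a strong part of cardinality exactly one; every other subspace of $\Cc_{B_n}$ (strong part of cardinality $0$ or $\geq 2$) already lies in $\Cc_{D_n}$. In particular $\Cc_{D_n}\subset\Cc_{B_n}$, and this inclusion respects dimensions.

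Next I would compare the two Yuzvinski bases. Since $\Cc_{B_n}$ and $\Cc_{D_n}$ are closed under sum, their nested sets are exactly the chains of subspaces (see the remark after Definition \ref{Gnested}), so a $\Cc_{D_n}$-nested set is precisely a $\Cc_{B_n}$-nested set all of whose elements avoid strong parts of cardinality one. Moreover the admissibility condition $f(A)<d_{\supp(f)_A,A}$ only involves the dimensions of the subspaces in $\supp(f)$, hence it does not matter whether a chain is viewed inside $\Cc_{D_n}$ or inside $\Cc_{B_n}$. Therefore the admissible monomials for $Y_{\Cc_{D_n}}$ are exactly the admissible monomials for $Y_{\Cc_{B_n}}$ whose support contains no subspace with a strong part of cardinality one; that is, the Yuzvinski basis of $H^*(Y_{\Cc_{D_n}},\Z)$ is the ``good'' part of the Yuzvinski basis of $H^*(Y_{\Cc_{B_n}},\Z)$, and its complement is precisely the set of ``bad'' monomials, those whose associated $\Cc_{B_n}$-nested set contains at least one element with strong part of cardinality one.

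Finally I would grade both sides by cohomological degree: the generating polynomial of the good monomials is $P_{\text{max,}D_n}(q)$ by definition, while the generating polynomial of the bad monomials is $\Gamma_{\text{max}}^n(q)$, which is exactly what the preceding theorem computes. Summing gives $P_{\text{max,}D_n}(q)+\Gamma_{\text{max}}^n(q)=P_{\text{max,}B_n}(q)$, which is the claim. The only step that needs an argument, and it is a short one, is the matching of admissibility data between the two bases; this is immediate from the fact that $d_{\Hc,B}$ is a function of dimensions alone, and everything else is bookkeeping already carried out in the proof of the preceding theorem.
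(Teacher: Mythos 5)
Your proposal is correct and follows essentially the same route as the paper: the paper's proof is exactly the ``good ${}+{}$ bad ${}={}$ total'' splitting of the Yuzvinski basis of the maximal $B_n$ model, with $\Gamma_\text{max}^n(q)$ accounting for the monomials whose (totally ordered) nested set contains a subspace with a strong part of cardinality one. You merely make explicit the identification of the $D_n$ basis with the ``good'' monomials (via $\Cc_{D_n}\subset\Cc_{B_n}$, nested sets being chains, and admissibility depending only on dimensions), which the paper leaves as an immediate remark.
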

\proof
Immediate from the theorem and the remarks above.\qed
\\
\begin{defin}
 For every $n \geq 4$ and $0 \leq s \leq n − 2$ we denote by $\G_s(D_n)$ the
($W(D_n)$ invariant) building set which contains the irreducibles and also all the
subspaces of the maximal building set which have dimension $\geq n- s$. We will call $\G_s(D_n)$ the {\em regular building sets of type \(D_n\) and degree \(s\)}. Moreover,
for every $n\geq 4$ and $s \geq n − 1$ we put $\G_s (D_n )$ to be equal to the maximal building
set.
\end{defin}

\begin{defin}
For every $n \geq 4$ and $0 \leq s \leq n − 2$ we denote by $\tilde{\G}_s (D_n)$ the
$W(D_n)$ invariant building set which contains all the subspaces of the maximal
building set which have dimension $\geq n -s$.  Moreover, for every $n \geq 4$ and
$s \geq n − 1$ we put $\tilde{\G}_s (D_n )$ to be equal to the maximal building set.
\end{defin}

Now,as in the case of the maximal model, we can compute the Poincar\'e polynomial of the models $Y_{\tilde{\G}_s(D_n)}$
starting from the ones of the models $Y_{\tilde{\G}_s(B_n)}$.
\begin{teo}
 For every $n\geq 4$ and $0\leq s \leq n-2$
$$P(Y_{\tilde{\G}_s(D_n)})(q)=P(Y_{\tilde{\G}_s(B_n)})(q)-\Gamma_s^n(q)$$ where
$$\Gamma_s^n(q)=n\left(\sum_{
\begin{array}{c}
 1\leq k \leq \lfloor \frac{s+2}{2} \rfloor \\
 \\
 (j_1,\ldots,j_{k+1})\in \tilde{J}_{k,s}(n-1)
\end{array}
}f(n-1,j_k)\cdots f(j_3,j_2)\tilde{f}(j_2,j_1)P(Y_{\tilde{\G}_s(B_{j_1})})(q)\right)
$$
where  \(\tilde{J}_{k,s}(n-1) \)is the set of $(k+1)$-tuples $(j_1,\ldots,j_{k+1})$ such that $1\leq j_1<j_2<\ldots<j_k<j_{k+1}=n-1$,
\(j_k\leq s\), $j_2-j_1\geq 1$ and $j_i-j_{i-1}\geq 2$ for $i\in\{3,\ldots,k+1\}$.
\end{teo}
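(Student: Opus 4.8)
The plan is to follow exactly the pattern established for the maximal model case (the theorem giving $P_{\text{max},D_n} = P_{\text{max},B_n} - \Gamma_{\text{max}}^n(q)$) and refine it to keep track of the length constraint imposed by the degree $s$. First I would recall that, since $\widetilde{\G}_s(D_n)$ and $\widetilde{\G}_s(B_n)$ contain exactly the same subspaces except for those having a strong part of cardinality one, the difference $P(Y_{\widetilde{\G}_s(B_n)})(q) - P(Y_{\widetilde{\G}_s(D_n)})(q)$ measures precisely the contribution of the basis monomials (in the Yuzvinski basis of $H^*(Y_{\widetilde{\G}_s(B_n)})$) whose associated $\widetilde{\G}_s(B_n)$-nested set contains at least one element with strong part of cardinality one.

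Next I would analyze the shape of such a ``bad'' nested set $N$. Because the nested sets inside $\widetilde{\G}_s(B_n)$ (which does not contain hyperplanes) are totally ordered by inclusion, there is a unique minimal element $N_1$ carrying a strong part of cardinality one. By minimality, every subspace of $N$ strictly contained in $N_1$ has no strong part and is obtained by splitting, in an admissible way, the weak part of $N_1$ into $j_1 \geq 1$ parts of the remaining $n-1$ coordinates, with $j_1 \le s$ (otherwise the sum of the maximal subspaces would not lie in $\widetilde{\G}_s(B_n)$); the subspaces lying above $N_1$ form an admissible $\widetilde{\G}_s(B_{j_1})$-nested set. The strong leaf can be chosen in $n$ ways. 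Then, regrouping basis monomials according to the lengths $j_1 < j_2 < \cdots < j_k < j_{k+1} = n-1$ of the successive partitions produced while climbing down from level $n-1$ to level $j_1$, exactly as in the proof of Theorem~\ref{formulafamigliainduttivatilda} and the maximal-$D_n$ theorem, one gets the factors $f(n-1,j_k)\cdots f(j_3,j_2)\tilde f(j_2,j_1)$, where the shift in $\tilde f$ accounts for the strong box, and the residual factor $P(Y_{\widetilde{\G}_s(B_{j_1})})(q)$. The constraints $j_2 - j_1 \ge 1$ and $j_i - j_{i-1} \ge 2$ for $i \ge 3$ come from the admissibility conditions on the exponents in the Yuzvinski basis (an element of dimension $d$ contributes exponents $< d - \dim(\text{sum of lower elements})$, which forces a gap of at least two between consecutive lengths except at the bottom step involving the strong part), and the new condition $j_k \le s$ records that all the split weak subspaces must stay in $\widetilde{\G}_s$. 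This is precisely the index set $\tilde{J}_{k,s}(n-1)$, and summing over admissible $k$ with $1 \le k \le \lfloor (s+2)/2 \rfloor$ yields the stated $\Gamma_s^n(q)$.

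The main obstacle I expect is bookkeeping: making sure the degree cutoff $s$ interacts correctly with both the length of the weak partition directly below $N_1$ (giving $j_1 \le s$) and the requirement $j_k \le s$ that bounds the whole descending chain, while simultaneously the top index is pinned at $j_{k+1} = n-1$ because $N_1$ itself has codimension-one strong part over $n$ coordinates. One must also check the edge cases $s$ small (so that the sum over $k$ is empty or has a single term) and verify that the recursion terminates correctly with $P(Y_{\widetilde{\G}_s(B_0)})(q) = 1$. Apart from this careful index-tracking, every step is a direct transcription of the $B_n$ and maximal-$D_n$ arguments already carried out, so no genuinely new idea is needed; the proof reduces to invoking the inductivity of the family $\widetilde{\G}_s(B_n)$, the combinatorial description of nested sets, and Theorem~\ref{formulabntilde}.

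\begin{proof}
The building sets $\widetilde{\G}_s(D_n)$ and $\widetilde{\G}_s(B_n)$ contain the same subspaces apart from those whose form $(r,\lambda)$ has $r=1$. Hence, in the Yuzvinski basis of $H^*(Y_{\widetilde{\G}_s(B_n)})$, the monomials that do not survive in $H^*(Y_{\widetilde{\G}_s(D_n)})$ are exactly those whose associated $\widetilde{\G}_s(B_n)$-nested set contains at least one subspace with strong part of cardinality one; call $\Gamma_s^n(q)$ their total contribution, so that $P(Y_{\widetilde{\G}_s(D_n)})(q)=P(Y_{\widetilde{\G}_s(B_n)})(q)-\Gamma_s^n(q)$.

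Since $\widetilde{\G}_s(B_n)$ contains no hyperplanes, every nested set in it is totally ordered by inclusion; a bad one thus has a unique minimal element $N_1$ with strong part of cardinality one. The strong leaf can be chosen in $n$ ways. By minimality, the subspaces of the nested set strictly contained in $N_1$ have no strong part and arise from an admissible splitting of the weak part of $N_1$, which is a partition of the remaining $n-1$ leaves into $j_1$ parts with $1\leq j_1\leq s$; the subspaces lying above $N_1$ form an admissible $\widetilde{\G}_s(B_{j_1})$-nested set. Regrouping the basis monomials by the lengths $j_1<j_2<\cdots<j_k<j_{k+1}=n-1$ of the successive partitions obtained while descending, exactly as in the proof of Theorem~\ref{formulabntilde} and of the formula for $\Gamma_{\text{max}}^n(q)$, produces the factors $f(n-1,j_k)\cdots f(j_3,j_2)\tilde f(j_2,j_1)$, where the shift in $\tilde f(j_2,j_1)=S(j_2,j_1)\frac{q^{j_2-j_1+1}-q}{q-1}$ accounts for the extra strong box, together with the residual factor $P(Y_{\widetilde{\G}_s(B_{j_1})})(q)$. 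The admissibility of the exponents in the Yuzvinski basis forces $j_i-j_{i-1}\geq 2$ for $i\in\{3,\ldots,k+1\}$ and $j_2-j_1\geq 1$, while the constraint that all split weak subspaces remain in $\widetilde{\G}_s$ gives $j_k\leq s$; this is precisely the index set $\tilde J_{k,s}(n-1)$, and $1\leq k\leq\lfloor\frac{s+2}{2}\rfloor$. Summing, and using $P(Y_{\widetilde{\G}_s(B_0)})(q)=1$ as base of the induction, yields the stated formula for $\Gamma_s^n(q)$.
\end{proof}
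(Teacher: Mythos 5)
You take exactly the paper's route: the paper's own proof of this theorem is a one-sentence reduction to the maximal-case argument (isolate the minimal element $N_1$ of the bad chain having a strong part of cardinality one, observe that everything below it is a weak splitting of its weak part, everything above it is a nested set in the quotient building set $\widetilde{\G}_s(B_{j_1})$, and the strong leaf can be chosen in $n$ ways), with the only modification being the dimension bound $\geq n-s$; your write-up reproduces this scheme, in fact with more detail than the paper gives.

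The step that does not survive scrutiny, however, is the one you state as automatic: that regrouping the bad monomials by the block counts $j_1<\dots<j_k$ ``produces the factors $f(n-1,j_k)\cdots f(j_3,j_2)\tilde f(j_2,j_1)$''. A weak subspace of $\Cc_{B_n}$ is not just a set partition: each weak block of size $b$ carries one of $2^{b-1}$ sign patterns. Once the block structures of the chain below $N_1$ and of the weak part of $N_1$ are fixed, there remain $2^{(n-1)-j_1}$ distinct choices of $N_1$ (the signs of its weak blocks), and each such choice determines the signs of all subspaces below it; so the number of configurations is $S(n-1,j_k)S(j_k,j_{k-1})\cdots S(j_2,j_1)\,2^{(n-1)-j_1}$, not the pure Stirling product, and this power of $2$ is not absorbed by $P(Y_{\widetilde{\G}_s(B_{j_1})})$, which only records what happens above $N_1$. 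Concretely, for $n=4$, $s=1$ one computes directly $P(Y_{\widetilde{\G}_1(B_4)})(q)=q^3+41q^2+41q+1$ and $P(Y_{\widetilde{\G}_1(D_4)})(q)=q^3+25q^2+25q+1$, so the bad contribution is $16(q+q^2)$ (there are $4\cdot 2^2=16$ subspaces of the form strong singleton plus weak triple, each contributing $q+q^2$), whereas the displayed formula gives $\Gamma_1^4=4\,\tilde f(3,1)=4(q+q^2)$. So either the factors must be corrected to carry these powers of $2$, or your argument must explain where they go --- and they do not go away; your own summary of the $B_n$ recursion, where the coefficients are the $h_{n,j}$ with binomials and powers of $2$, should have been the warning sign. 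Two smaller points of the same kind: the reason for $j_1\le s$ is simply $\dim N_1=n-j_1\ge n-s$, not a condition on ``the sum of the maximal subspaces''; and for $k\ge 2$ the smallest weak element has dimension $n-1-j_k$, so membership in $\widetilde{\G}_s(B_n)$ forces $j_k\le s-1$ rather than the stated $j_k\le s$, a discrepancy you assert away rather than reconcile. To be fair, the paper's proof is equally silent on all of this, so you have faithfully mirrored it --- but as a self-contained proof the key regrouping step is asserted rather than established, and the assertion fails a direct check.
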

\proof
The proof is essentially the same as in the maximal case: we have to subtract to $P(Y_{\tilde{\G}_s(B_n)})(q)$ the
contribution provided by the monomials whose associated nested sets contain an element with a strong part of
cardinality one. The only difference is that now we are dealing with subspaces of dimension at least $n-s$.\\
\qed

\begin{rem}
 If $n<4$, as bases for the induction, we take   $P(Y_{\widetilde{\G}_0(D_1)})(q)=1$;
$P(Y_{\widetilde{\G}_0(D_2)})(q)=P(Y_{\widetilde{\G}_1(D_2)})(q)=1+q$;
$P(Y_{\widetilde{\G}_0(D_3)})(q)=1+q+q^2$;$P(Y_{\widetilde{\G}_1(D_3)})(q)=P(Y_{\widetilde{\G}_2(D_3)})(q)=1+7q+q^2$.\\
\end{rem}

The same strategy  (start from what with know about $B_n$ and subtract) may be applied to 
the computation of $P(Y_{\G_s(D_n)})(q)$.\\
The main difference  is that the strong irreducible sets in case  $D_n$ must have cardinality at least three while $B_n$
admits strong irreducible sets of cardinality two. So we may work as follows: if we define
$$\gamma_D(q,t):=2\left(\gamma_B(q,t)-q\frac{t^2}{2!2^2}-(4q+q^2)\frac{t^3}{3!2^3}-q\sum_{n\geq
4}\binom{n}{2}\frac{t^n}{n!2^n}\right)$$
then the contribution of the strong trees (in the Poincar\'e series of the minimal model) is given by
$$\mu_D(q,t)=\frac{1}{1-\gamma_D(q,t)}-1.$$
Calling $\Phi_D(q,t,y):=e^{y\lambda_A(q,t)}(y\mu_D(q,t)+1)-1$ we have:
\begin{teo}
 For every \(n\geq 4\) and  \(0\leq s\leq n-2\)  we have the following  formula for the Poincar\'e polynomial of the
models \(Y_{\G_{s}(D_n)}\):
 \[\frac{t^n}{2^{n-1}n!}P(Y_{\G_{s}(D_n)})(q)=\frac{t^n}{2^{n-1}n!}P(Y_{{\widetilde \G}_{s}(D_n)})(q)+\]\[+\sum_{
\begin{array}{c}
s<j\leq n-2
\end{array}
}\Phi_{D_{|{\small \begin{array}{c}
deg\ y=j\\
 deg\ t =n
 \end{array}}}}(q,t,1)P(Y_{{\widetilde \G}_{s}(D_j)})(q) \]
\end{teo}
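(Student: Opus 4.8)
The plan is to mimic the proof of Theorem~\ref{formulafamigliainduttiva} (type $A_{n-1}$), as was already done for type $B_n$ in Theorem~\ref{formulafamigliainduttivabn}, working throughout with the exponential generating series weighted by $\frac{t^n}{2^{n-1}n!}$. The first step is to describe the $\G_s(D_n)$-nested sets, since by Theorem~\ref{base coomologia} these are the supports of the monomials in the Yuzvinski basis of $H^*(Y_{\G_s(D_n)})$. Exactly as in the $A_n$ and $B_n$ cases, a $\G_s(D_n)$-nested set $S$ splits as a disjoint union $S=S_1\sqcup S_2$, where $S_1$ collects the subspaces of $S$ lying in $\widetilde{\G}_s(D_n)$ and $S_2$ those lying in $\F_{D_n}-\widetilde{\G}_s(D_n)$. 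The subspaces of $S_1$ are totally ordered by inclusion and, if $S_1\neq\emptyset$, $S_1$ has a minimal element $A$; the subspaces of $S_2$ form an $\F_{D_n}$-nested set, their sum $B$ does not belong to $\widetilde{\G}_s(D_n)$, and when $S_1\neq\emptyset$ one has $B\subsetneq A$. Consequently every basis monomial $m$ factors uniquely as $m=m_{S_1}m_{S_2}$ with $\supp(m_{S_i})=S_i$, and $m_{S_2}$ is at the same time an element of the Yuzvinski basis of $H^*(Y_{\F_{D_n}})$, the minimal $D_n$ model.

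Next I would enumerate the factors $m_{S_2}$ by grouping them according to their maximal subspaces $A_1,\dots,A_t$: completing the associated subset of $\{1,\dots,n\}$ with parts equal to $1$ and recording the cardinality of the (possibly empty) strong part, one attaches to $S_2$ a singular partition, and I let $j$ be its length (so that the quotient obtained by collapsing each $A_i$ lives in $\C^j$). Two constraints appear, just as before: $j\le n-2$, since every $A_i$ has dimension $\ge 2$ (otherwise it is not in the support of a basis element), and $j>s$, since otherwise $A_1+\dots+A_t$ already lies in $\G_s(D_n)$, contradicting $S_2\cap\widetilde{\G}_s(D_n)=\emptyset$. The fact that in type $D$ a strong irreducible has cardinality at least $3$, which is the information encoded in the correction term $\gamma_D$, enters the dimension bookkeeping precisely at this stage. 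By construction of $\Phi_D(q,t,y)=e^{y\lambda_A(q,t)}(y\mu_D(q,t)+1)-1$, the generating series collecting all the $m_{S_2}$ whose maximal elements give length $j$ is the part of $\Phi_D(q,t,y)$ of degree $j$ in $y$ and $n$ in $t$, specialized at $y=1$: the exponent of $y$ counts the maximal subspaces, the factor $e^{y\lambda_A}$ accounts for the trees with only weak vertices (governed, after the $2^{n-1}n!$ normalization, by the type-$A$ series $\lambda_A=\lambda(q,t)$), and $(y\mu_D+1)$ for the optional strong tree, with $\mu_D$ derived from $\gamma_D$ so as to discard the strong subspaces of cardinality $2$.

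Then I would treat the factors $m_{S_1}$. Once $S_2$, and hence $A_1,\dots,A_t$, is fixed, every subspace of $S_1$ contains each $A_i$; collapsing each $A_i$ to a single new symbol (the construction used in the proof of Theorem~\ref{formulafamigliainduttiva}) identifies $S_1$ with a $\widetilde{\G}_s(D_j)$-nested set on $j$ symbols, and therefore $m_{S_1}$ with an element of the Yuzvinski basis of $H^*(Y_{\widetilde{\G}_s(D_j)})$ having the same exponents. This uses that the family $\widetilde{\G}_s(D_n)$ is inductive, so that the quotient is again a regular building set of type $\widetilde{\G}_s$; whether an $A_i$ is strong or weak dictates whether the collapsed symbol plays the role of a strong or a weak point, and in both cases the quotient is $\widetilde{\G}_s(D_j)$ of the correct type. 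Multiplying the two generating series and summing over $s<j\le n-2$, and adding the $S_2=\emptyset$ contribution (which reproduces $\frac{t^n}{2^{n-1}n!}P(Y_{\widetilde{\G}_s(D_n)})(q)$), one obtains the stated identity; the low-rank base cases listed in the preceding remark anchor the recursion.

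The main obstacle, as in type $B_n$, is not the bookkeeping but checking that $\Phi_D$ is the right generating function: one must verify that $e^{y\lambda_A}(y\mu_D+1)-1$, with $\gamma_D$ removing the reducible strong subspaces of small cardinality, genuinely enumerates the $\F_{D_n}$-nested sets weighted by the number of maximal subspaces under the $2^{n-1}n!$ normalization, and that it dovetails with the collapsing step above. A secondary point to dispose of is that the ``double vertex'' phenomenon for even $n$ is irrelevant here: both $\G_s(D_n)$ and $\widetilde{\G}_s(D_n)$ consist of all subspaces of the maximal building set of dimension $\ge n-s$, a collection stable under the sign swap, so they are well-defined single building sets and no choice of $\pm$ enters the argument.
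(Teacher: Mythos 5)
Your overall strategy is the one the paper intends (its proof is literally ``similar to the $A_{n-1}$ case''): split a $\G_s(D_n)$-nested set into $S_1\subset{\widetilde \G}_s(D_n)$ and $S_2\subset\F_{D_n}-{\widetilde \G}_s(D_n)$, factor each basis monomial as $m_{S_1}m_{S_2}$, enumerate the $m_{S_2}$ part by $\Phi_D$ and the $m_{S_1}$ part by an auxiliary regular model. The gap is in the step you dispose of in one sentence: that collapsing the maximal elements $A_1,\dots,A_t$ of $S_2$ identifies $m_{S_1}$ with a basis monomial of $H^*(Y_{{\widetilde \G}_s(D_j)})$ ``in both cases'' because ``the family ${\widetilde \G}_s(D_n)$ is inductive''. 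The paper never claims inductivity for the $D$-type family ${\widetilde \G}_s(D_n)$ (only for the $A$ and $B$ ones), and in fact it computes $P(Y_{{\widetilde \G}_s(D_n)})$ by subtracting $\Gamma_s^n$ from the $B_n$ answer precisely because quotients of $D$-type building sets acquire $B$-type elements (strong parts of cardinality one). Concretely: every element of $S_1$ contains every $A_i$; if one $A_i$ is strong, the elements of $\Cc_{D_n}$ lying above it are governed by $B$-type combinatorics in which $\supp(A_i)$ is annihilated rather than collapsed to a ``strong point'' of a $D_j$-arrangement (no such notion exists), so the relevant poset is not $\Cc_{D_j}$ on $j$ symbols; and even when all $A_i$ are weak, an element of $S_1$ may have strong part exactly equal to some $\supp(A_i)$, which after collapsing becomes a singleton strong part, a partition with no counterpart in $\Cc_{D_j}$. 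Hence the exponent-preserving bijection with the Yuzvinski basis of $Y_{{\widetilde \G}_s(D_j)}$, on which your argument rests, is not established and is false as a literal identification of the collapsed poset (with its dimension function) with ${\widetilde \G}_s(D_j)$.

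This is not a cosmetic point: it is exactly the strong/weak bookkeeping that forces the introduction of $\gamma_D$, $\mu_D$ and the subtraction terms of type $\Gamma_s^n$, and you yourself flag the verification that $\Phi_D$ ``dovetails with the collapsing step'' as the main obstacle while leaving it undone. A complete proof must treat separately the configurations in which $S_2$ contains a strong tree (where the subspaces of $S_1$ above it are counted by $B$-type data on fewer symbols, with the admissibility exponent of the minimal element of $S_1$ measured from the sum $B$ of the maximal elements of $S_2$) and show how these contributions recombine with the $y\mu_D$ factor of $\Phi_D$ so as to produce the stated coefficients $P(Y_{{\widetilde \G}_s(D_j)})(q)$; without that analysis the argument does not go through as written. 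Your remarks that the double-vertex phenomenon for even $n$ is irrelevant here, and that the $2^{n-1}n!$ normalization turns the weak-tree series into $\lambda_A$, are correct, as are the constraints $s<j\le n-2$ for the $S_2$ part.
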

\begin{proof}
The proof is similar to the one  in the $A_{n-1}$ case (see Theorem \ref{formulafamigliainduttiva}).
\end{proof}

\section{The Euler characteristic of  real models}
\label{seceuler}
The De Concini-Procesi construction can be repeated also for real  subspace arrangements and its projective version produces real
compact models.  The cohomology of these models has been described by Rains in \cite{rains}. In this section we will make a remark about Euler characteristic.

Let us consider  a real building set of subspaces in an euclidean vector space and denote by \(Y_{\G}(\C) \) and \(Y_{\G}(\R) \)
the complex model and the real compact model associated to it.   From a result of  \cite{krasnov} it follows   that
\(H^{2i}(Y_{\G}(\C), \Z_2)\cong H^{i}(Y_{\G}(\R), \Z_2)\);  therefore \(\sum_{i}(-1)^i \dim\ H^{2i}(Y_{\G}(\C), \Q)\)
 is equal to the Euler characteristic  \(\chi_E (Y_{\G}(\R))\). 
Then  if we put \(q=-1\) in our formulas for the Poincar\'e polynomials we obtain   the Euler characteristic of the corresponding
 real compact  De Concini-Procesi models.

We point out  that  there are other ways to compute the Euler characteristic of these models. For instance, as it is well known, in the \(A_{n-1}\) case the maximal real  compact model  can be obtained by gluing  \(n!\)   permutohedra of dimension
\(n-2\). Therefore another   formula for the Euler characteristic can  be obtained by counting the faces of the \(n!\) 
permutohedra and taking into account their identifications (a face of dimension \(i\)  is identified with
\(2^{n-i-1}-1\) other \(i\)-dimensional faces). More precisely, let \(P_{n-2}\) be the \((n-2)\)-dimensional permutohedron.
Then the  Euler characteristic of the {\bf real maximal} model \(Y_{max,n}(\R)\) is provided by the following formula:

\[\chi_E(Y_{max,n}(\R))= \sum_{i=0}^{n-2} \, (-1)^i\frac{  | i-\hbox{dimensional faces of}\;  P_{n-2}|}{2^{n-i-1}}\, n!=\]
\begin{equation} 
\label{eulero1}=\sum_{i=0}^{n-2} \, (-1)^i\frac{ S(n-1, n-1-i)}{2^{n-i-1}}\,(n-i-1)! n!
\end{equation}


From the  formula of Theorem \ref{ulterioremiglioramentoformulamax}, since \(f_{n,j}(-1)\) is equal to 0 if \(n,j\) have different parity and is equal to \(-S(n,j)\) otherwise, we obtain:

\[ \chi_E(Y_{max,n}(\R))= P(Y_{max,n})(-1)=1+
\]
\[+\sum_{\begin{array}{c}
 1\leq k\leq \lfloor \frac{n-1}{2} \rfloor\\
1\leq j_1<j_2<\cdots <j_k\leq n-2\\
n\equiv j_k \equiv \cdots \equiv j_1 \mod 2
\end{array}}(-1)^k S(j_2,j_1)S(j_3,j_2)\cdots S(j_k,j_{k-1})S(n,j_k)
\]

When \(n\) is odd this sum is easily shown to be equal to 0 (this is in accordance with Poincar\'e duality), while for \(n\) even the formula above specializes to:
\begin{equation} 
\label{eulero2} \chi_E(Y_{max,n}(\R))= P(Y_{max,n})(-1)=1+\end{equation}
\[+\sum_{\begin{array}{c}
 1\leq k\leq \lfloor \frac{n-1}{2} \rfloor\\
1\leq j_1<j_2<\cdots <j_k\leq n-2\\
j_i \ even
\end{array}}(-1)^k S(j_2,j_1)S(j_3,j_2)\cdots S(j_k,j_{k-1})S(n,j_k)
\]

For instance, when \(n=6\)
\[\chi_E(Y_{max,n}(\R))= 1-S(6,4)-S(6,2)+S(6,4)S(4,2)=1-65-31+455=360\]
We point out that by comparing formulas (\ref{eulero1}) and (\ref{eulero2}) some nice relations, involving Stirling numbers of the second kind, appear.
This remark extends to all the De Concini-Procesi models of  root arrangements, which are obtained by gluing nestohedra (see \cite{zelevinski}),  in particular to all the regular models.  For instance, the maximal model in case  \(B_n\) is obtained by gluing \(2^n n!\) permutohedra \(P(n-1)\), therefore by computing in two different ways the Euler characteristic one obtains that 
\[\sum_{i=0}^{n-1} \, (-1)^i\frac{ S(n, n-i)}{2^{n-i}}\,(n-i)! 2^nn!  \]
is equal to the number obtained  putting \(s=n-2\) and \(q=-1\) in the formula of Theorem \ref{formulabntilde}.
We remark that in in \cite{GaiffiServenti} one can find other different formulas for the Euler characteristic of the maximal models of root arrangements.

\addcontentsline{toc}{section}{References}
\bibliographystyle{acm}
\bibliography{Bibliogpre} 

\begin{thebibliography}{10}

\bibitem{callegaif}
{\sc Callegaro, F., and Gaiffi, G.}
\newblock An explicit description of {C}oxeter homology complexes.
\newblock {\em ISRN Geometry\/} (2011).

\bibitem{DJS}
{\sc Davis, M., Januszkiewicz, T., and Scott, R.}
\newblock Fundamental group of blow-ups.
\newblock {\em Adv. Math 177\/} (2003), 115--179.

\bibitem{DCP2}
{\sc De~Concini, C., and Procesi, C.}
\newblock Hyperplane arrangements and holonomy equations.
\newblock {\em Selecta Mathematica 1\/} (1995), 495--535.

\bibitem{DCP1}
{\sc De~Concini, C., and Procesi, C.}
\newblock Wonderful models of subspace arrangements.
\newblock {\em Selecta Mathematica 1\/} (1995), 459--494.

\bibitem{DCP4}
{\sc De~Concini, C., and Procesi, C.}
\newblock On the geometry of toric arrangements.
\newblock {\em Transform. Groups 10\/} (2005), 387--422.

\bibitem{DCP3}
{\sc De~Concini, C., and Procesi, C.}
\newblock {\em Topics in Hyperplane Arrangements, Polytopes and Box-Splines}.
\newblock Springer, Universitext, 2010.

\bibitem{drinfeld}
{\sc Drinfeld, V.}
\newblock On quasi triangular quasi-hopf algebras and a group closely connected
  with {$Gal(\overline{\Q}/\Q)$}.
\newblock {\em Leningrad Math. J. 2\/} (1991), 829--860.

\bibitem{etihenkamrai}
{\sc Etingof, P., Henriques, A., Kamnitzer, J., and Rains, E.}
\newblock The cohomology ring of the real locus of the moduli space of stable
  curves of genus 0 with marked points.
\newblock {\em Annals of Math. 171\/} (2010), 731--777.

\bibitem{feichtner}
{\sc Feichtner, E.}
\newblock De {C}oncini-{P}rocesi arrangement models - a discrete geometer's
  point of view.
\newblock {\em Combinatorial and Computational Geometry, J.E. Goodman, J. Pach,
  E. Welzl, eds; MSRI Publications 52, Cambridge University Press\/} (2005),
  333--360.

\bibitem{feichtnerkozlovincidence}
{\sc Feichtner, E., and Kozlov, D.}
\newblock Incidence combinatorics of resolutions.
\newblock {\em Selecta Math. (N.S.) 10\/} (2004), 37--60.

\bibitem{feichtnersturmfels}
{\sc Feichtner, E., and Sturmfels, B.}
\newblock Matroid polytopes, nested sets and {B}ergman fans.
\newblock {\em Port. Math. (N.S.) 62\/} (2005), 437--468.

\bibitem{GaiffiBlowups}
{\sc Gaiffi, G.}
\newblock Blow ups and cohomology bases for {D}e {C}oncini-{P}rocesi models of
  subspace arrangements.
\newblock {\em Selecta Mathematica 3\/} (1997), 315--333.

\bibitem{Gaiffitesi}
{\sc Gaiffi, G.}
\newblock {D}e {C}oncini - {P}rocesi models of arrangements and symmetric group
  actions.
\newblock {\em Collana Tesi di Perfezionamento, Scuola Normale Superiore\/}
  (1999).

\bibitem{Gaiffidantonio}
{\sc Gaiffi, G., and D'Antonio, G.}
\newblock Symmetric group actions on the cohomology of configurations in
  {\(\R^d\)}.
\newblock {\em Rendiconti Lincei - Matematica e Applicazioni 21\/} (2010),
  235--250.

\bibitem{GaiffiServenti}
{\sc Gaiffi, G., and Serventi, M.}
\newblock {P}oincar\'e series for maximal {D}e {C}oncini - {P}rocesi models of
  root arrangements.
\newblock {\em Rendiconti Lincei - Matematica e Applicazioni 23\/} (2012),
  51--67.

\bibitem{getzler}
{\sc Getzler, E.}
\newblock Operads and moduli spaces of genus 0 riemann surfaces.
\newblock {\em The Moduli space of Curves, ed. by R. Dijkgraaf, C. Faber, G.
  van der Geer, Progress in Math. 129, Birkh\"auser\/} (1995), 199--230.

\bibitem{HenPisa}
{\sc Henderson, A.}
\newblock Rational cohomology of the real coxeter toric variety of type {A}.
\newblock {\em Configuration Spaces: Geometry, Combinatorics, and Topology,
  publications of the Scuola Normale Superiore, no. 14, A. Bjorner, F. Cohen,
  C. De Concini, C. Procesi and M. Salvetti (eds.), Pisa\/} (2012), 313--326.

\bibitem{krasnov}
{\sc Krasnov, V.~A.}
\newblock Real algebraically maximal varieties.
\newblock {\em Math Notes 73\/} (2003), 806--812.

\bibitem{LTV}
{\sc Lambrechts, P., Turchin, V., and Volic, I.}
\newblock Associahedron, cyclohedron, and permutohedron as compatifications of
  configuration spaces.
\newblock {\em Bull. Belg. Math. Soc. Simon Stevin 17\/} (2010), 303--332.

\bibitem{manin}
{\sc Manin, Y.~I.}
\newblock Generating functions in algebraic geometry and sums over trees.
\newblock {\em The Moduli space of Curves, ed. by R. Dijkgraaf, C. Faber, G.
  van der Geer, Progress in Math. 129, Birkh\"auser\/} (1995), 401--418.

\bibitem{postnikov}
{\sc Postnikov, A.}
\newblock Permutohedra, associahedra, and beyond.
\newblock {\em Int Math Res Notices\/} (2009), 1026--1106.

\bibitem{postnikoreinewilli}
{\sc Postnikov, A., Reiner, V., and Williams, L.}
\newblock Faces of generalized permutohedra.
\newblock {\em Documenta Mathematica 13\/} (2008), 207--273.

\bibitem{procesitoric}
{\sc Procesi, C.}
\newblock The toric variety associated to {W}eyl chambers.
\newblock {\em Mots. {M}elanges offerts a {M}.-{P}. {S}chutzenberger.
  {E}ditions {H}ermes.\/} (1990), 153--161.

\bibitem{rains}
{\sc Rains, E.}
\newblock The homology of real subspace arrangements.
\newblock {\em J Topology 3 (4)\/} (2010), 786--818.

\bibitem{SzenesVergne}
{\sc Szenes, A., and Vergne, M.}
\newblock Toric reduction and a conjecture of {B}atyrev and {M}aterov.
\newblock {\em Invent. Math. 158\/} (2004), 453--495.

\bibitem{YuzBasi}
{\sc Yuzvinsky, S.}
\newblock Cohomology bases for {D}e {C}oncini-{P}rocesi models of hyperplane
  arrangements and sums over trees.
\newblock {\em Invent. Math. 127\/} (1997), 319--335.

\bibitem{zelevinski}
{\sc Zelevinski, A.}
\newblock Nested complexes and their polyhedral realizations.
\newblock {\em Pure and Applied Mathematics Quarterly 2\/} (2006), 655--671.

\end{thebibliography}
\end{document}